\newtheorem{thm}{Theorem}[section]
\newtheorem{cor}[thm]{Corollary}
\newtheorem{prop}[thm]{Proposition}
\newtheorem{lem}[thm]{Lemma}
\newtheorem{defn}[thm]{Definition}
\newtheorem{ex}[thm]{Example}
\newtheorem{rem}[thm]{Remark}
\tikzstyle{V}=[draw, fill =black, circle, inner sep=0pt, minimum size=1.5pt]
\tikzstyle{wV}=[draw, fill =white, circle, inner sep=0pt, minimum size=4.5pt]
\tikzstyle{bV}=[draw, fill =black, circle, inner sep=0pt, minimum size=4.5pt]
\tikzstyle{over}=[draw=white,double=black,line width=2pt, double distance=.5pt]
\newcommand\junk[1]{}
\newcommand{\nc}{\newcommand}
\nc{\rnc}{\renewcommand}
\nc{\bb}[1]{{\mathbb #1}}
\nc{\bbA}{\bb{A}}\nc{\bbB}{\bb{B}}\nc{\bbC}{\bb{C}}\nc{\bbD}{\bb{D}}
\nc{\bbE}{\bb{E}}\nc{\bbF}{\bb{F}}\nc{\bbG}{\bb{G}}\nc{\bbH}{\bb{H}}
\nc{\bbI}{\bb{I}}\nc{\bbJ}{\bb{J}}\nc{\bbK}{\bb{K}}\nc{\bbL}{\bb{L}}
\nc{\bbM}{\bb{M}}\nc{\bbN}{\bb{N}}\nc{\bbO}{\bb{O}}\nc{\bbP}{\bb{P}}
\nc{\bbQ}{\bb{Q}}\nc{\bbR}{\bb{R}}\nc{\bbS}{\bb{S}}\nc{\bbT}{\bb{T}}
\nc{\bbU}{\bb{U}}\nc{\bbV}{\bb{V}}\nc{\bbW}{\bb{W}}\nc{\bbX}{\bb{X}}
\nc{\bbY}{\bb{Y}}\nc{\bbZ}{\bb{Z}}
\nc{\mbf}[1]{{\mathbf #1}}
\nc{\bfA}{\mbf{A}}\nc{\bfB}{\mbf{B}}\nc{\bfC}{\mbf{C}}\nc{\bfD}{\mbf{D}}
\nc{\bfE}{\mbf{E}}\nc{\bfF}{\mbf{F}}\nc{\bfG}{\mbf{G}}\nc{\bfH}{\mbf{H}}
\nc{\bfI}{\mbf{I}}\nc{\bfJ}{\mbf{J}}\nc{\bfK}{\mbf{K}}\nc{\bfL}{\mbf{L}}
\nc{\bfM}{\mbf{M}}\nc{\bfN}{\mbf{N}}\nc{\bfO}{\mbf{O}}\nc{\bfP}{\mbf{P}}
\nc{\bfQ}{\mbf{Q}}\nc{\bfR}{\mbf{R}}\nc{\bfS}{\mbf{S}}\nc{\bfT}{\mbf{T}}
\nc{\bfU}{\mbf{U}}\nc{\bfV}{\mbf{V}}\nc{\bfW}{\mbf{W}}\nc{\bfX}{\mbf{X}}
\nc{\bfY}{\mbf{Y}}\nc{\bfZ}{\mbf{Z}}
\nc{\bfa}{\mbf{a}}\nc{\bfb}{\mbf{b}}\nc{\bfc}{\mbf{c}}\nc{\bfd}{\mbf{d}}
\nc{\bfe}{\mbf{e}}\nc{\bff}{\mbf{f}}\nc{\bfg}{\mbf{g}}\nc{\bfh}{\mbf{h}}
\nc{\bfi}{\mbf{i}}\nc{\bfj}{\mbf{j}}\nc{\bfk}{\mbf{k}}\nc{\bfl}{\mbf{l}}
\nc{\bfm}{\mbf{m}}\nc{\bfn}{\mbf{n}}\nc{\bfo}{\mbf{o}}\nc{\bfp}{\mbf{p}}
\nc{\bfq}{\mbf{q}}\nc{\bfr}{\mbf{r}}\nc{\bfs}{\mbf{s}}\nc{\bft}{\mbf{t}}
\nc{\bfu}{\mbf{u}}\nc{\bfv}{\mbf{v}}\nc{\bfw}{\mbf{w}}
\nc{\bfx}{\mbf{A}}
\nc{\bfy}{\mbf{B}}
\nc{\bfz}{\mbf{C}}
\nc{\bbx}{\mathbb{a}}
\nc{\bby}{\mathbb{b}}
\nc{\bbt}{\mathbb{t}}
\nc{\bbz}{\mathbb{c}}
\nc{\mcal}[1]{{\mathcal #1}}
\nc{\calA}{\mcal{A}}\nc{\calB}{\mcal{B}}\nc{\calC}{\mcal{C}}\nc{\calD}{\mcal{D}}
\nc{\calE}{\mcal{E}} \nc{\calF}{\mcal{F}}\nc{\calG}{\mcal{G}}\nc{\calH}{\mcal{H}}
\nc{\calI}{\mcal{I}}\nc{\calJ}{\mcal{J}}\nc{\calK}{\mcal{K}}\nc{\calL}{\mcal{L}}
\nc{\calM}{\mcal{M}}\nc{\calN}{\mcal{N}}\nc{\calO}{\mcal{O}}\nc{\calP}{\mcal{P}}
\nc{\calQ}{\mcal{Q}}\nc{\calR}{\mcal{R}}\nc{\calS}{\mcal{S}}\nc{\calT}{\mcal{T}}
\nc{\calU}{\mcal{U}}\nc{\calV}{\mcal{V}}\nc{\calW}{\mcal{W}}\nc{\calX}{\mcal{X}}
\nc{\calY}{\mcal{Y}}\nc{\calZ}{\mcal{Z}}
\nc{\fA}{\frak{A}}\nc{\fB}{\frak{B}}\nc{\fC}{\frak{C}} \nc{\fD}{\frak{D}}
\nc{\fE}{\frak{E}}\nc{\fF}{\frak{F}}\nc{\fG}{\frak{G}}\nc{\fH}{\frak{H}}
\nc{\fI}{\frak{I}}\nc{\fJ}{\frak{J}}\nc{\fK}{\frak{K}}\nc{\fL}{\frak{L}}
\nc{\fM}{\frak{M}}\nc{\fN}{\frak{N}}\nc{\fO}{\frak{O}}\nc{\fP}{\frak{P}}
\nc{\fQ}{\frak{Q}}\nc{\fR}{\frak{R}}\nc{\fS}{\frak{S}}\nc{\fT}{\frak{T}}
\nc{\fU}{\frak{U}}\nc{\fV}{\frak{V}}\nc{\fW}{\frak{W}}\nc{\fX}{\frak{X}}
\nc{\fY}{\frak{Y}}\nc{\fZ}{\frak{Z}}
\nc{\fa}{\frak{a}}\nc{\fb}{\frak{b}}\nc{\fc}{\frak{c}} \nc{\fd}{\frak{d}}
\nc{\fe}{\frak{e}}\nc{\fFf}{\frak{f}}\nc{\fg}{\frak{g}}\nc{\fh}{\frak{h}}
\nc{\fri}{\frak{i}}\nc{\fj}{\frak{j}}\nc{\fk}{\frak{k}}\nc{\fl}{\frak{l}}
\nc{\fm}{\frak{m}}\nc{\fn}{\frak{n}}\nc{\fo}{\frak{o}}\nc{\fp}{\frak{p}}
\nc{\fq}{\frak{q}}\nc{\fr}{\frak{r}}\nc{\fs}{\frak{s}}\nc{\ft}{\frak{t}}
\nc{\fu}{\frak{u}}\nc{\fv}{\frak{v}}\nc{\fw}{\frak{w}}\nc{\fx}{\frak{x}}
\nc{\fy}{\frak{y}}\nc{\fz}{\frak{z}}
\nc{\al}{\alpha}
\nc{\ep}{\epsilon}
\nc{\la}{\lambda}
\nc{\be}{\beta}
\nc{\ka}{\kappa}
\nc{\de}{\delta}
\nc{\na}{\nabla}
\nc{\ga}{\gamma}
\nc{\nap}{{\na_+}}
\nc{\nam}{{\na_-}}
\nc{\lefta}{\leftarrow}
\nc{\wt}{\widetilde}
\nc{\La}{\Lambda}
\nc{\tr}{\triangle}
\nc{\tw}{\tilde w}
\nc{\tW}{\tilde W}
\nc{\hP}{\hat P}
\nc{\hB}{\hat B}
\nc{\hY}{\widehat Y}
\nc{\hx}{\hat x}
\nc{\btau}{\boldsymbol{\tau}}
\DeclareMathOperator{\pt}{pt}
\DeclareMathOperator{\Hom}{Hom}
\DeclareMathOperator{\Sym}{Sym}
\DeclareMathOperator{\rev}{rev}
\DeclareMathOperator{\Frac}{Frac}
\DeclareMathOperator{\stab}{stab}
\newcommand{\unit}{\mathbf{1}}
\newcommand{\hh}{\mathbb{h}}
\title[Structure Constants]{Structure Constants in equivariant oriented cohomology of flag varieties}
\author{Rebecca Goldin, Changlong Zhong}
\address{George Mason University, Department of Mathematical Sciences, 4400 University Dr., Fairfax, VA 22030}
\email{rgoldin@gmu.edu}
\address{University at Albany, Department of Mathematics, CK399, 1400 Washington Ave, Albany, 12222}
\email{czhong@albany.edu}
\subjclass[2010]{
Primary 14M15; 
Secondary 20C08; 
}
\keywords{equivariant oriented cohomology, Schubert classes, structure constants}
\begin{document}

\begin{abstract}We introduce generalized Demazure operators for the equivariant oriented cohomology of the flag variety, which have specializations to various Demazure operators and Demazure-Lusztig operators in both equivariant cohomology and equivariant K-theory. In the context of the geometric basis of the equivariant oriented cohomology given by certain Bott-Samelson classes, we use these operators to obtain formulas for the structure constants arising in different bases. Specializing to divided difference operators and Demazure operators in singular cohomology and K-theory, we recover the formulas for structure constants of Schubert classes obtained in Goldin-Knutson \cite{GK19}. Two specific specializations result in formulas for the the structure constants  for
cohomological and K-theoretic stable bases as well; as a corollary we reproduce a formula for the structure constants of the Segre-Schwartz-MacPherson basis previously obtained by Su \cite{S19}. Our methods involve the study of the formal affine Demazure algebra, providing a purely algebraic proof of these results.
\end{abstract}

\maketitle

\section{Introduction}
Flag varieties  $G/B$ are among the most studied varieties in  topology and algebraic geometry. They have a cellular decomposition by Schubert cells, whose closures are called Schubert varieties. Schubert varieties are invariant under a torus action and, consequently, their torus-equivariant singular cohomology is spanned as a module by the Schubert classes. 

Other classes associated to Schubert varieties  in the equivariant singular cohomology 
$H^*_T$ and equivariant K-theory $K_T$  of the flag variety $G/B$
 include Chern-Schwartz-MacPherson  (CSM) classes and Motivic Chern (mC) classes, studied in \cite{AM15, AMSS17, RTV15, RV15, RTV17,  S15,  SZZ17}.
These classes coincide with the corresponding {\it stable bases} of Maulik-Okounkov \cite{MO19} for $H^*_T$ and $K_T$, of the Springer resolutions. 
 Due to this fact, we always refer to the CSM classes as the cohomological stable basis, and to the mC classes as the K-theoretic stable basis. These classes behave like 
 Schubert classes in their corresponding theories. Roughly speaking, Schubert classes in $H^*_T(G/B)$ and $K_T(G/B)$ are constructed by {\it Demazure operators} (also called {\it divided difference operators}), and  elements of the stable bases are constructed by {\it Demazure-Lusztig operators.} All these operators generate various Hecke-type algebras. 

Structure constants of Schubert classes are central objects in Schubert calculus, appearing in important questions of representation theory and combinatorics. In \cite{GK19}, the first author and Knutson obtain formulas for the structure constants in $H^*_T(G/B)$ and $K_T(G/B)$ using  geometric properties of Bott-Samelson resolutions of Schubert varieties. They pull-back the Schubert classes to the equivariant cohomology (or equivariant K-theory) of Bott-Samelson variety, apply the cup product in this variety, then push-forward back to $G/B$. In \cite{S19}, Su generalized this method to the so-called Segre-Schwartz-MacPherson (SSM) classes, a variant form of CSM classes.

We are interested in generalized cohomology theories, called  oriented cohomology theories, defined by Levine and Morel \cite{LM07}. These  cohomologies are contravariant functors defined on the category of smooth projective varieties over a field $k$ of characteristic 0 to the category of commutative rings, such that for proper maps, there is a push-forward map on cohomology groups. Examples include Chow rings (singular cohomology), K-theory and algebraic cobordism. Chern classes are defined for each oriented cohomology theory $\hh$, and there is an associated formal group law $F$ defined over $R=\hh(\pt)$.  The machinery works equivariantly  as well, resulting in a cohomology theory $\hh_T$ with an associated formal group law $F$ defined over $R = \hh_T(pt)$. 

For flag varieties, generalizing work of Kostant and Kumar \cite{KK86, KK90} on equivariant singular cohomology and equivariant K-theory of flag varieties, the ring $\hh_T(G/B)$ has a nice algebraic model, constructed in Hoffmann {\it et al.} in \cite{HMSZ14}, and studied in \cite{CZZ1, CZZ2, CZZ3} by Calm\`es, Zainoulline, and the second author. One can define the (formal) Demazure  operators $X_\al$ associated to each simple root $\al$. These operators generate a non-commutative algebra, called the formal affine Demazure algebra $\bfD_F$. 
 It is a free left $\hh_T(\pt)$-module with basis 
 $\{X_{I_w}\ \vert \  w\in W\}$, 
 where $X_{I_w}$ is, roughly speaking, a product of the operators $X_\al$,  with $I_w$ indicating a reduced word expression for $w$.

The algebra $\bfD_F$ is also a co-commutative co-algebra, where the coproduct comes from  the  twisted Leibniz rule of the operator $X_\al$. Taking the $\hh_T(\pt)$-dual, one obtains a commutative ring $\bfD^*_F$, 
 a free $\hh_T(\pt)$-module isomorphic to $\hh_T(G/B)$, together with a dual basis $\{X_{I_w}^*\ \vert \ w\in W\}$.
Indeed, for equivariant Chow group/singular cohomology/K-theory, $X_{I_w}^*$ coincides, up to various normalizations, to the Schubert 
 class associated with $w$. Then $H_T^*(G/B)$ and $K_T(G/B)$ are achieved with the same module basis, and a restricted coefficient ring: a polynomial ring for $H_T^*(G/B)$ and Laurent polynomial ring for $K_T(G/B)$.

We notice that  the product structure on $\bfD_F^*$  is obtained  by dualizing the coproduct structure of $\bfD_F$. 
It follows that the  structure constants of the basis $X_{I_w}^*$ may be deduced
from the twisted Leibniz rule of the product $X_{\be_1} X_{\be_2}\cdots X_{\be_k}$ for a reduced 
word $s_{\be_1}\cdots s_{\be_k}$
of $w\in W$. This is the main idea of the proof of  Theorem \ref{thm:Leibniz}, which implies the main result, Theorem \ref{thm:prod}. Specializing $\hh_T$ to equivariant singular cohomology and equivariant K-theory, we recover the formulas of the first author and Knutson in \cite{GK19}.

In the case of $H_T^*(G/B)$ and $K_T(G/B)$, replacing the Demazure operators $X_\al$ by the Demazure-Lusztig operators $T_\al$ and $\tau_\al^-$, one obtains the stable bases for $H^*_T(G/B)$ and $K_T(G/B)$, respectively. Both the cohomology stable basis and the K-theory stable basis  can be described in an analogous fashion to the story for Schubert classes. That is, the Demazure-Lusztig operators generate a
 degenerate affine Hecke algebra (for equivariant cohomology) 
 and an  
 affine Hecke algebra (for equivariant K-theory).  The 
 dual elements to products of these operators are essentially the cohomological/K-theoretic stable bases, so  their
  respective twisted Leibniz rules
result in a formula for
 the structure constants of stable bases. For instance, for cohomology, we recover the formula of Su \cite{S19} (see Remark \ref{rem:Su}).

To work with the Demazure operators $X_\al$ and Demazure-Lusztig operators  $T_\al$ at the same time, we define a general operator $Z_\al$ (see \textsection 3) in a ring containing $\bfD_F$, which can be specialized to $X_\al$ and $T_\al$. Our main results are Theorems \ref{thm:prod} and \ref{thm:cohstab}, which 
state a 
formula for structure constants of the basis determined by $Z_\al$ and apply it to the cohomological stable basis.

The paper is organized as follows: In \textsection 2 we recall necessary notation introduced by the second author in \cite{CZZ1, CZZ2, CZZ3}. We recall the definition of  a Demazure element,  the  formal affine Demazure algebra, its dual, and relation with $\hh_T(G/B)$. In \textsection 3 we prove the twisted Leibniz rule for the operator $Z_\alpha$, which is used to derive the structure constants of the basis $Z_{I_w}^*$ in \textsection 4. In \textsection 5, we specialize our result to Demazure operators in singular cohomology and K-theory, and recover the formulas in \cite{GK19}.
In \textsection 6 we specialize our result to  Demazure-Lusztig operators in singular cohomology, which, as a by-product, recovers the formula due to Su in \cite{S19}.
In \textsection 7 we consider Demazure-Lusztig operators in K-theory and obtain a formula for the structure constants of the K-theoretic stable basis.
In \textsection 8, for equivariant oriented cohomology, we generalize some results of Kostant-Kumar  (\cite[Proposition 4.32]{KK86}, \cite[Lemma 2.25]{KK90}) by relating our formula for structure constants with  a restriction formula of Schubert classes. 

\noindent{\it Acknowledgments}: 
The first author was partially supported by National Science Foundation grant DMS-2152312.

\section{Preliminary} \label{sec:prelim}
We follow notation used in \cite{CZZ1,CZZ2, CZZ3}. Let $\Sigma\hookrightarrow \Lambda^\vee, \al\mapsto \al^\vee$ be a semi-simple root datum of rank $n$. That is, $\Sigma$ is the finite set of roots,  $\La$ is the lattice and $\La^\vee$ is its dual. Let $\{\al_1,...,\al_n\}$ be the set of simple roots, $\Sigma^+$ and $\Sigma^-$ be the set of positive and negative roots, respectively. 

Let $W$ be the Weyl group generated by the associated simple reflections $s_i:=s_{\al_i}$. Denote by $\le$ the Bruhat order, and let $\ell(v)$ be the length of an element $v\in W$.  Note that $W$ acts on $\La$ since it preserves the root system. For each sequence $I=(i_1,...,i_k)$ with $i_j\in [n]$, denote the product $s_{i_1}\cdots s_{i_k}\in W$ by 
$\prod I,$ in which we keep track both of the concatenated sequence of simple reflections and the resulting element of $W$.
If $\prod I$ is a reduced word expression for the resulting Weyl group element, we say that $I$ is a {\it reduced sequence}. Following  \cite[\textsection 1]{GK19}, define the {\it Demazure product} 
$$\wt \prod I=s_{i_1}\cdots s_{i_k}$$ subject to the braid relations and $s_i^2=s_i$ for all $i$. Observe that $\prod I=\wt \prod I$  when $I$ is a reduced sequence. 
When $I$ is a reduced sequence for $w$, we may denote it by $I_w$ and abuse notation by calling it a reduced word for $w$. Finally, let $I^{\rev}$  denote the sequence obtained by reversing the sequence $I$. 

Let $F$ be a formal group law  over the coefficient ring $R$. Examples of formal group laws include the additive formal group law $F_a=x+y$ and the multiplicative formal group law $F_m=x+y-xy$. Suppose the root datum together with the formal group law satisfy the regularity condition of \cite[Lemma 2.7]{CZZ3}. This guarantees  that all the properties that we use from \cite{CZZ1, CZZ2, CZZ3}  hold. Indeed, the regularity condition guarantees that the elements  $x_\alpha, \al\in \La$ defined in $S$ below  
are non-zero-divisors. In particular, the Demazure operators $X_\alpha$ for simple roots $\alpha$ are well defined.

 Let $G$ be a split semi-simple  linear algebraic group with maximal torus $T$ and a Borel subgroup $B$. Let the associated root datum of $G$ be $\Sigma\hookrightarrow \La^\vee$, so $\La$ is the group of characters of $T$. 

Let $\hh$ be an oriented cohomology theory of Levine and Morel. Roughly speaking, it is a contravariant functor from the category of smooth projective varieties to the category of commutative rings such that there is a push-forward map for any proper map. The Chern classes of vector bundles are defined. Associated to $\hh$, there is a formal group law  is $F$ defined over $R=\hh(\pt)$. That is, the first Chern class of line bundles over a smooth projective variety $X$ satisfies 
$$
c^\hh_1(\calL_1\otimes \calL_2)=F(c_1^\hh(\calL_1), c_1^\hh(\calL_2)).
$$ For example, $F_a$ (resp. $F_m$) is associated to the Chow group (or singular cohomology) (resp. K-theory). Both can be extended to the torus equivariant setting. We assume the equivariant cohomology theory $\hh_T$ is Chern-complete over the point for $T$, that is, the ring $\hh_T(\pt)$ is separated and complete with respect to the topology induced by the $\gamma$-filtration \cite[Definition 2.2]{CZZ3}. In particular, this includes the completed equivariant Chow ring, the completed equivariant K-theory and equivariant algebraic cobordism.

Let $S$ be the formal group algebra defined in \cite{CPZ13}:
\begin{equation}\label{eq:S}
S= R[[\La]]_F:=R[[x_\la \vert \la\in \La]]/J_F,
\end{equation}
where $J_F$ is the closure of the ideal generated by $x_0$ and $x_{\la+\mu}-F(x_\la, x_\mu),$ for all  $\la, \mu\in \La$. Indeed, if $\{t_1,...,t_n\}$ is a basis of $\La$, then $S$ is (non-canonically) isomorphic to $R[[x_{t_1},...,x_{t_n}]]$. According to \cite[\textsection 3]{CZZ3},  $S\cong \hh_T(\pt)$ with $x_\la$ corresponding to $c^\hh_1(\calL_\la)$ where  $\calL_\la$ is the line bundle  associated to $\la\in \La$. Since $x_{-\la}$ is the formal inverse of $x_\la$, i.e.  $F(x_\la, x_{-\la})=0$ in $S$, we may write 
\[
x_{-\la}=-x_{\la}+\text{higher~degree~terms}\in S. 
\]
Define $Q:=S[\frac{1}{x_\al} \vert \al\in \Sigma]$. We will frequently need the special element of $Q$ 
 given by $\kappa_\la:=\frac{1}{x_\la}+\frac{1}{x_{-\la}}$. Note that $\kappa_\la$ actually belongs to $S$. Note also that the action of $W$ on $\La$ 
induces an action of $W$ on $S$. 

\begin{ex}\label{ex:FaFm} Two cases of the formal product appear widely in the literature \cite[\textsection 2]{CPZ13}. 
\begin{enumerate}
\item  If $F=F_a$ with $R=\bbZ$, then $\hh$ is the singular cohomology/Chow groups, and  $S\cong \Sym_{\bbZ}(\La)^\wedge$  ( with $x_\la\mapsto \la$) is the completion of the polynomial ring at the augmentation ideal. In this case $x_{-\la}=-x_\la$ and $\kappa_\la=0$.
\item If $F=F_m$ with $R=\bbZ$, then $\hh$ is K-theory, and $S\cong \bbZ[\La]^\wedge$ (with ${x_\la\mapsto 1-e^{-\la}}$) is the completion of the Laurent polynomial ring at the augmentation ideal. In this case $x_{-\la}=\frac{x_\la}{x_\la-1}$, and $\kappa_\la=1$. 
\end{enumerate}
\end{ex}

To obtain equivariant cohomology $H_T^*(X)$  and equivariant K-theory $K_T(X)$, we restrict the coefficient ring to  $S^a=Sym[\Lambda]$ and  $S^m=\mathbb Z[\Lambda]$, respectively.

\subsection{The operator algebras $Q_W$ and $\bfD_F$} 
This paper is concerned with various divided difference operators acting on $\hh_T(G/B)$, the equivariant oriented cohomology of $G/B$. To create an algebraic framework for these operators, following \cite{CZZ1, CZZ2} we 
localize $S$ at $\{x_\alpha\}$ to create an algebra out of this localization and the Weyl group, as follows.

Let $S$ be the ring described in \eqref{eq:S}, and let $Q:=S[\frac{1}{x_\al} \vert \al\in \Sigma]$.
Define
$Q_W:=Q\rtimes R[W],$
as a left $Q$-module with basis $\{\de_w\}, w\in W$. 

We shall see that $Q_W$ acts on its dual space $Q^*_W$, which is identified with $Q\otimes_S \hh_T(G/B)$, the cohomlogy of $G/B$ with inverted Chern classes.

We impose a product on $Q_W$ by
\[
{(}p\de_w {)}{(}p'\de_{w'}{)}=pw(p')\de_{ww'}, \quad \mbox{for all }p,p'\in Q,\mbox{ and } w,w'\in W,
\]
using the natural $W$ action on $Q$ induced from that on $\La$ and extending linearly. Note that $Q$ identified with $Q\delta_e$ is a subring of $Q_W$ under this product, where $e\in W$ denotes the identity element of $W$.
We routinely abuse notation and write $\de_\al$ for $\de_{s_\al}$, and use  $1=\de_e$ to denote the identity element of $Q_W$.
The ring $Q_W$ acts on $Q$ by 
\[p\de_w\cdot p'=pw(p'),\quad \mbox{for all } p, p'\in Q.\]

The action of $Q_W$ on $Q$ induces a coproduct structure on $Q_W$ as follows. Let $\eta=\sum\limits_{w\in W} q_w\de_w\in Q_W$. Then 
$$
\eta\cdot (pq) = \sum_w q_w w(pq) = \sum_w q_w w(p) w(q) = \sum_w q_w(\de_w\cdot p)(\de_w\cdot q).
$$
This action factors through the coproduct $\Delta: Q_W \rightarrow Q_W\otimes_Q Q_W$
\begin{equation}\label{eq:coprod}
\Delta(\eta) = \sum_w q_w\Delta(\de_w) = \sum_w q_w \de_w \otimes \de_w.
\end{equation}
In other words, the coproduct structure on $Q_W$ is induced from the $Q_W$-action on $Q$.

For any simple root $\al$
we, define the Demazure element $X_\al$ 
and the push-pull element $Y_\al$ in $Q_W$:
\begin{align*}
X_{\al}=\frac{1}{x_{\al}}(1-\de_{\al}) \qquad\mbox{and}\qquad Y_{\al}=\frac{1}{x_{-\al}}+\frac{1}{x_{\al}}\de_{\al}. 
\end{align*}
We observe the relationship $Y_{\al}=\ka_{\al}-X_{\al}$. In particular, 
 if $F=F_a$ (resp. $F=F_m$), then $Y_\al=-X_\al$ (resp. $Y_\al=1-X_\al$). 

The way $Q_W$ acts on $Q$ implies that $X_\al$ acts in a fashion similar to the Demazure operator defined in \cite{Dem74} (and there denoted $D_\al$). In particular, ${X_\al\cdot S \subset S}$ and, for any $r\in R, X_\al\cdot r=0$ and $\de_\al\cdot r = s_\al(r)=r$.

Let $\bfD_F$ be the $R$-subalgebra of $Q_W$ 
$$
\bfD_F=\langle S, X_{\al_1},\dots, X_{\al_n}\rangle
$$
generated by $S$ and the elements $X_\al \in Q_W$ for simple roots $\al$, and call it the {\it formal affine Demazure algebra.} It is also generated by $S$ and $\{Y_\al:  \al ~\mbox{simple}\}$. As a left $S$ module,  $\bfD_F$ is a also free with basis $\{X_{I_w}\}_{w\in W}$, or with basis $\{Y_{I_w}\}_{w\in W}$; see \cite[Proposition 7.7]{CZZ1}.

Let  $w =s_{i_1}\cdots s_{i_k}$ be a reduced word decomposition and $I_w = (i_1,...,i_k)$ the corresponding sequence of reflections. Define
\begin{align*}
X_{I_w} = X_{\al_{i_1}}\cdots X_{\al_{i_k}} \qquad\mbox{and} \qquad 
Y_{I_w} = Y_{\al_{i_1}}\cdots Y_{\al_{i_k}}.
\end{align*}
In particular, $X_{(i)}=X_{\al_i}$ and $Y_{(i)}=Y_{\al_i}$, though we eliminate parentheses when there is no confusion. We write $X_e:=1\in Q_W$ to indicate $X_I$ when $I$ is the empty sequence.

The Demazure and push-pull elements have the following properties:
\begin{lem}\cite[Proposition 3.2]{Z15}\label{lem:1} Let $\al$ and $\be$ be simple roots. 
The following identities hold in $Q_W$:
\begin{enumerate}
\item \label{item:1}$X_\al^2=\ka_\al X_\al, \quad Y_\al^2=\ka_\al Y_\al$.
\item \label{item:2}$X_\al p=s_\al(p)X_\al+X_\al\cdot p, \quad p\in Q$. 
\item \label{item:3}If $(s_\al s_\be)^2=e$, then $X_{\al}X_\be=X_\be X_\al$. 
\item \label{item:4} If $(s_\al s_\be)^3=e$, then $X_\be X_\al X_\be-X_\al X_\be X_\al =\ka_{\al \be }X_\al -\ka_{\be \al }X_\be $, where 
\[
\ka_{\al \be }=\frac{1}{x_{\al +\be }x_\be }-\frac{1}{x_{\al +\be }x_{-\al }}-\frac{1}{x_\al x_\be }. 
\]
Furthermore, $\ka_{\al \be }\in S$ by \cite[Lemma 6.7]{HMSZ14}. 
\item \label{item:5} Suppose $s_\al s_\be $ has order $m$ with $m=4$ or $6$,  and $I_w$ is a choice of reduced word for $w\in W$.  Then \[{\underbrace{X_\al X_\be X_\al \cdots}_m }-{\underbrace{X_\be X_\al X_\be \cdots}_m}=\sum_{v\in W}c_{I_v}X_{I_v},\] where $c_{I_v}=0$ if $v\not\leq\underbrace{s_\al s_\be s_\al \cdots}_m$. Moreover, $c_{I_v}=0$ if $\ell(v)=m-1$ or $v=e$. 
\end{enumerate}
\end{lem}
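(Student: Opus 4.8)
The plan is to establish all five identities by direct computation inside the algebra $Q_W$, using only the defining relation $\de_w p=w(p)\de_w$ for $p\in Q$, the relation $\de_\al^2=1$, and the standing fact $\ka_\al\in S$. For (1): expanding $X_\al^2=\tfrac1{x_\al}(1-\de_\al)\tfrac1{x_\al}(1-\de_\al)$ and moving $\de_\al$ past $\tfrac1{x_\al}$ (replacing it by $\tfrac1{x_{-\al}}$ since $s_\al(x_\al)=x_{-\al}$) yields $\tfrac1{x_\al}\bigl(\tfrac1{x_\al}-\tfrac1{x_{-\al}}\de_\al\bigr)(1-\de_\al)$; then $\de_\al(1-\de_\al)=-(1-\de_\al)$ collapses this to $\tfrac1{x_\al}\bigl(\tfrac1{x_\al}+\tfrac1{x_{-\al}}\bigr)(1-\de_\al)=\ka_\al X_\al$. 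Since $\ka_\al$ is $s_\al$-invariant and $Y_\al=\ka_\al-X_\al$, the identity $Y_\al^2=\ka_\al Y_\al$ is then formal. For (2): $X_\al p=\tfrac1{x_\al}\bigl(p-s_\al(p)\de_\al\bigr)$, and since $X_\al\cdot p=\tfrac{p-s_\al(p)}{x_\al}$, subtracting leaves $\tfrac{s_\al(p)}{x_\al}(1-\de_\al)=s_\al(p)X_\al$. For (3): when $(s_\al s_\be)^2=e$ each reflection fixes the other's root, so $\de_\al$ commutes with $\tfrac1{x_\be}$ and $\de_\al\de_\be=\de_\be\de_\al$; hence $X_\al X_\be=\tfrac1{x_\al x_\be}(1-\de_\al)(1-\de_\be)$, which is symmetric in $\al$ and $\be$.

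For (4) it suffices to work in the rank-two case $\langle\al,\be^\vee\rangle=\langle\be,\al^\vee\rangle=-1$, where $s_\al(x_\be)=x_{\al+\be}$ and the subgroup $\langle s_\al,s_\be\rangle$ is isomorphic to $S_3$. Expand $X_\be X_\al X_\be$ and $X_\al X_\be X_\al$ as $\sum_{w}p_w\de_w$; one checks the coefficients of $\de_w$ coincide for every $w$ with $\ell(w)\ge 2$, so these cancel in the difference, and, using that both triple products annihilate $1\in S$, the remaining part assembles into $\ka_{\al\be}X_\al-\ka_{\be\al}X_\be$ with explicit $\ka_{\al\be},\ka_{\be\al}\in Q$. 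The genuinely nontrivial point is that $\ka_{\al\be}$ lies in $S$ and not merely in $Q$, i.e.\ that its apparent poles along $x_\al$, $x_\be$, $x_{\al+\be}$ cancel; this is exactly \cite[Lemma 6.7]{HMSZ14}.

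For (5): both $m$-fold products lie in $\bfD_F$, which by \cite{CZZ1} is a free left $S$-module on $\{X_{I_v}\}_{v\in W}$, so the difference has a unique expansion $\sum_v c_{I_v}X_{I_v}$ with $c_{I_v}\in S$. Since the products only involve $X_\al$ and $X_\be$, they are supported on the dihedral group $\langle s_\al,s_\be\rangle$, and as each $X_{I_v}$ has a nonzero $\de_v$-term, $c_{I_v}=0$ unless $v\in\langle s_\al,s_\be\rangle$, i.e.\ $v\le w_0':=\underbrace{s_\al s_\be\cdots}_m$. Acting on $1\in S$ and using $X_\ga\cdot 1=0$ annihilates $X_{I_v}\cdot 1$ for all $v\ne e$, while the difference also kills $1$, so $c_{I_e}=0$. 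Finally, for $\ell(v)=m-1$: the $\de_{w_0'}$-coefficient of an $m$-fold product equals, up to sign, the product of $x_\ga^{-1}$ over the positive roots of the rank-two subsystem spanned by $\al,\be$, hence is independent of the ordering of the factors, so $c_{I_{w_0'}}=0$; granting this, the $\de_v$-coefficient of $\sum_u c_{I_u}X_{I_u}$ for $\ell(v)=m-1$ equals $c_{I_v}$ times a unit of $Q$, and since the two $m$-fold products have equal $\de_v$-coefficients for such $v$, we conclude $c_{I_v}=0$.

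Parts (1)--(3) are pure bookkeeping. The main obstacles are: in (4), proving that the correction coefficient $\ka_{\al\be}$ is pole-free (lies in $S$); and in (5), marrying the freeness of $\bfD_F$ over $S$ to a careful comparison of the leading and subleading $\de_w$-coefficients of the two $m$-fold products.
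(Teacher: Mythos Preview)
The paper does not prove this lemma; it is quoted from \cite[Proposition~3.2]{Z15}. Your direct computational argument is essentially correct and supplies what the paper omits. Parts (1)--(3) are routine, and your outline for (4) is fine: once the $\de_w$-coefficients for $\ell(w)\ge 2$ are seen to agree (because the roots $s_{\ga_1}\cdots s_{\ga_{j-1}}(\ga_j)$ along a reduced word enumerate the inversion set, independently of the word), the difference is supported on $\{\de_e,\de_{s_\al},\de_{s_\be}\}$, annihilates $1$, and hence is determined by its $\de_{s_\al}$- and $\de_{s_\be}$-coefficients, which one reads off as $-\ka_{\al\be}/x_\al$ and $\ka_{\be\al}/x_\be$.

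The one step in (5) you assert without justification is that the two $m$-fold products have equal $\de_v$-coefficients when $\ell(v)=m-1$. Here is the missing argument. In the dihedral parabolic each such $v$ has a \emph{unique} reduced word, say $v=s_\al s_\be\cdots$ ($m-1$ letters). A parity count shows that the only subword of $(\al,\be,\al,\ldots)$ spelling this is $E=\{1,\ldots,m-1\}$, and the only subword of $(\be,\al,\be,\ldots)$ spelling it is $E'=\{2,\ldots,m\}$. In both cases the contribution is $(-1)^{m-1}$ times the product of $x_\ga^{-1}$ over the $m-1$ inversion roots of $v$, multiplied by one extra factor: $x_{v(\ga_m)}^{-1}$ for the first word and $x_\be^{-1}$ for the second. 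But since both reduced words represent the longest dihedral element $w_0'$, we have $v\,s_{\ga_m}=w_0'=s_\be\,v$, so $s_{v(\ga_m)}=v s_{\ga_m} v^{-1}=s_\be$, i.e.\ $v(\ga_m)=\pm\be$; and $v(\ga_m)>0$ because $\ell(v s_{\ga_m})>\ell(v)$. Hence $v(\ga_m)=\be$ and the two extra factors coincide. With this filled in, your proof of (5) is complete.
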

Lemma \ref{lem:1}.\eqref{item:4}-\eqref{item:5} imply that
 the operators $X_\al$ (and similarly $Y_\al$) do not satisfy braid relations for general $F$. For $F=F_a$ or $F=F_m$, they do; in these cases, the coefficients $\ka_{\al \be}$ and $c_{I_v}$ all vanish. In general, $X_{I_w}$ and $Y_{I_w}$ depend on the choice of $I_w$ due to this failure of braid relations.

For the purposes of this paper, we fix a reduced sequence $I_w$ of $w$ for each $w\in W$. While the specific coefficients and calculations regarding $X_{I_w}$ and $Y_{I_w}$  depend on this choice, statements regarding bases and ring phenomena do not.

By construction, $\{\de_v:\ v\in W\}$ form a basis of $Q_W$ as a module over $Q$. In \cite{CZZ1}, and extended in \cite{CZZ2}, the second author proves that $\{X_{I_v}: \ v\in W\}$  and  $\{Y_{I_v}: \ v\in W\}$  also form bases of $Q_W$ as a module over $Q$, and that the change of basis matrix from $\{X_{I_v}\}$ (or from $\{Y_{I_v}\}$) to $\{\de_v\}$ consists of elements of $S$.  In particular, $\{\delta_v\}$ are elements of $\bfD_F$. The lower-triangularity of the change of bases matrices is expressed in the following lemma.

\begin{lem}\cite[Lemma 3.2, Lemma 3.3]{CZZ2} \label{coefficientsforinverting}
For each $v\in W$, choose a reduced decomposition of $v$ and let $I_v$ be its corresponding sequence. There exist elements $a^X_{I_w,v}\in Q$ for $v\in W$,  and $b^X_{w,I_v}\in S$ 
 such that
$$
X_{I_w}=\sum_{v\le w}a^X_{I_w,v}\ \de_v, \mbox{ and}\quad \de_{w}=\sum_{v\le w}b^X_{w,I_v}X_{I_v}.
$$
Similarly, there exist $a^Y_{I_w,v}\in Q$ and $b^Y_{w,I_v}\in S$ such that
$$
Y_{I_w}=\sum_{v\le w}a^Y_{I_w,v}\ \de_v, \mbox{ and}\quad \de_{w}=\sum_{v\le w}b^Y_{w,I_v}Y_{I_v}.
$$
\end{lem}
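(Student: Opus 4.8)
The plan is to prove the two pairs of identities by induction on $\ell(w)$: first the expansions of $X_{I_w}$ and $Y_{I_w}$ in the basis $\{\de_v\}$ (essentially a bookkeeping argument directly inside $Q_W$), and then to deduce the reverse expansions of $\de_w$ from the structure of $\bfD_F$ already recalled.

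For the first identity I would in fact prove the stronger claim that for \emph{every} $w$ and every reduced word $I$ for $w$ one has $X_I=\sum_{v\le w}a^X_{I,v}\de_v$ with $a^X_{I,v}\in Q$ and with the ``diagonal'' coefficient $a^X_{I,w}$ a unit in $Q$. The base cases $w=e$ and $w=s_i$ are immediate from $X_{\al_i}=\frac1{x_{\al_i}}(\de_e-\de_{s_i})$. For the inductive step write $I=(i_1,\dots,i_k)$, set $I'=(i_1,\dots,i_{k-1})$, a reduced word for $w'=s_{i_1}\cdots s_{i_{k-1}}$, and use $X_I=X_{I'}X_{\al_{i_k}}$ together with the identity, valid in $Q_W$ by the product rule and the relation $v(x_\la)=x_{v(\la)}$,
\[
\de_v X_{\al_{i_k}}=\tfrac1{x_{v(\al_{i_k})}}\bigl(\de_v-\de_{vs_{i_k}}\bigr).
\]
Substituting the inductive expansion of $X_{I'}$ exhibits $X_I$ as a $Q$-combination of the $\de_v$ and $\de_{vs_{i_k}}$ with $v\le w'$; the subword characterization of Bruhat order shows each such index is $\le w=w's_{i_k}$, giving the support condition, and the coefficient of $\de_w$ comes only from the term $-\de_{vs_{i_k}}$ with $v=w'$, so $a^X_{I,w}=-a^X_{I',w'}/x_{w'(\al_{i_k})}$ is again a unit in $Q$ (a product of inverses of $x_\beta$ with $\beta$ a root). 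The statement for $Y_{I_w}$ is proved identically, replacing the displayed formula by $\de_vY_{\al_{i_k}}=\tfrac1{x_{-v(\al_{i_k})}}\de_v+\tfrac1{x_{v(\al_{i_k})}}\de_{vs_{i_k}}$; again the support is controlled and the diagonal coefficient is a unit.

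For the second identity, the first point is that $\de_w\in\bfD_F$ for all $w$, which follows by induction on $\ell(w)$ from $\de_{s_i}=1-x_{\al_i}X_{\al_i}\in\bfD_F$ and, for $\ell(w)\ge2$, from $\de_w=\de_{s_i}\de_{s_iw}$ for a left descent $s_i$ of $w$, since $\bfD_F$ is a subalgebra of $Q_W$. Since $\bfD_F$ is a free left $S$-module with basis $\{X_{I_v}\}_{v\in W}$ by \cite[Proposition 7.7]{CZZ1}, we may write $\de_w=\sum_v b^X_{w,I_v}X_{I_v}$ with $b^X_{w,I_v}\in S$, and likewise with the basis $\{Y_{I_v}\}$. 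Finally, for the support condition: by the first part, after refining the Bruhat order on $\{v:v\le u\}$ to a linear order the change-of-basis matrix from $\{X_{I_v}\}_{v\le u}$ to $\{\de_v\}_{v\le u}$ is triangular with unit diagonal over $Q$, hence invertible, so $\{X_{I_v}\}_{v\le u}$ and $\{\de_v\}_{v\le u}$ span the same $Q$-submodule of $Q_W$; applying this with $u=w$ shows $\de_w\in\bigoplus_{v\le w}Q\,X_{I_v}$, and since $\{X_{I_v}\}_{v\in W}$ is a $Q$-basis of $Q_W$ we conclude $b^X_{w,I_v}=0$ unless $v\le w$ (and similarly for $b^Y_{w,I_v}$).

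The $\de$-expansion induction is routine; the real content is the $S$-integrality of the coefficients $b^X_{w,I_v}$ and $b^Y_{w,I_v}$ (as opposed to mere membership in $Q$), and this is exactly the step where the structure of $\bfD_F$ as a free $S$-module is used. One could instead establish this integrality directly by induction on $\ell(w)$, rewriting the products $X_{I_u}X_{\al_i}$ back in the fixed basis $\{X_{I_v}\}$; but that argument essentially reproves part of \cite[Proposition 7.7]{CZZ1}, so invoking that result is the efficient route.
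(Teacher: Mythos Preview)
The paper does not give its own proof of this lemma: it is quoted verbatim from \cite[Lemma 3.2, Lemma 3.3]{CZZ2} and followed directly by Example~\ref{ex:changebasis}, with no proof environment in between. So there is nothing in the paper to compare your argument against.

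That said, your argument is correct and is essentially the standard one. The inductive computation of $X_{I_w}$ (and $Y_{I_w}$) in the $\de_v$-basis is exactly how one proves the triangularity in \cite{CZZ1,CZZ2}; your use of the subword/lifting property to show that $v\le w'$ and $w=w's_{i_k}>w'$ force $v,vs_{i_k}\le w$ is the right justification, and your observation that the diagonal entry $a^X_{I,w}$ is a unit in $Q$ is what makes the triangular change-of-basis matrix invertible over $Q$. For the reverse direction you correctly isolate the nontrivial point, namely that $b^X_{w,I_v}\in S$ rather than merely in $Q$, and you handle it in the efficient way: show $\de_w\in\bfD_F$ and invoke the $S$-freeness of $\bfD_F$ on $\{X_{I_v}\}$ from \cite[Proposition 7.7]{CZZ1}, both of which the present paper already records. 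The support condition then follows from the unique expansion in the $Q$-basis $\{X_{I_v}\}$, as you argue. Nothing is missing.
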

Notice that nonzero coefficients $b^X_{w, I_v}$ are elements of $S$ with $v\leq w$.

\begin{ex}\label{ex:changebasis}
Consider the root datum $A_2$, with 
$$W=\{e, s_1, s_2, s_1s_2, s_2s_1, w_0\},$$ where $w_0$ is the longest element and $s_i$ is the reflection corresponding to $\alpha_i$ for $i=1,2$. We fix the reduced sequence $I_{w_0} = (1,2,1)$ for $w_0$.
For simplicity,  let $\al_{13}=\al_1+\al_2$. By direct computation,
\begin{align*}
&\de_{e\phantom{1}} =X_e &\qquad &\de_{s_1s_2}=1-x_1X_{(1)}-x_{\al_{13}}X_{(2)}+x_{\al_1}x_{\al_{13}}X_{(1,2)},\\
&\de_{1} =1-x_{\al_1}X_{(1)}&\qquad &\de_{s_2s_1}=1-x_{\al_2}X_{(2)}-x_{\al_{13}}X_{(1)}+x_2x_{\al_{13}}X_{(2,1)},\\ 
&\de_{2}  =1-x_{\al_2}X_{(2)} &\qquad  &\de_{w_0\phantom{w}}=1-x_{\al_{13}}X_{(2)}-(x_{\al_1}+x_{\al_2}-\kappa_{\al_1} x_{\al_1}x_{\al_2})X_{(1)}  \\
\phantom{\de_2} &\phantom{=1-x_2X_2 }&\quad &
+x_{\al_1}x_{\al_{13}}X_{(1,2)}+x_{\al_2}x_{\al_{13}}X_{(2,1)}-x_{\al_1}x_{\al_2}x_{\al_{13}}X_{I_{w_0}}.\\
\end{align*}
\end{ex}

\subsection{The dual operator algebras}\label{se:dualopalg}

The dual $Q$-module
$$
Q_W^*=\Hom_Q(Q_W, Q)\cong \Hom(W, Q),
$$ 
contains a natural basis $\{f_w\}_{w\in W}$ dual to $\{\delta_w\}_{w\in W}$, defined by 
$$
\langle f_w, \de_v\rangle=\begin{cases}
					1&\mbox{if $w=v$};\\
					0& \mbox{otherwise}.
					\end{cases}
$$

One may think of $Q_W^*$ as the $T$-equivariant oriented cohomology of $W$ with the trivial $T$ action, tensored with $Q$.
In particular, 
$$
Q_W^* = Q\otimes_S \hh_T(W) =  Q\otimes_S \hh_T(G/B).
$$
The module $Q_W^*$ forms a ring with product $f_w f_v = 1$ if an only if $w=v$, and $0$ otherwise, extended linearly to all elements of $Q_W^*$, 
and unity $\unit=\sum_{w\in W}f_w$.  This product structure is equivalent to the one induced from the 
coproduct structure (see \textsection\ref{sec:prod} below).

The ring $Q_W$ acts on $Q_W^*$ by 
\[
\langle z\bullet f, z'\rangle =\langle f, z' z\rangle , \quad \mbox{for all }z, z'\in Q_W,\  f\in Q_W^*.
\]
In the bases $\{\de_w\}$ of $Q_W$ and $\{f_w\}$ of $Q_W^*$, the action has explicit formulation
\begin{equation}p\de_w\bullet (qf_v)=qvw^{-1}(p)f_{vw^{-1}}, \mbox{ for all }p,q\in Q. 
\end{equation}
Denote 
$$
\quad \pt_w=(\prod_{\al<0}x_\al)\bullet f_w=w\left(\prod_{\al<0}x_\al\right)f_w\in Q_W^*. 
$$
 
Let  $\bfD_F^*:=\Hom_S(\bfD_F, S)\subset Q_W^*$ be the dual $S$-module to $\bfD_F$.  It is proved in \cite[Lemma 10.3]{CZZ2} that $\pt_w\in \bfD_F^*$. 
Let
\begin{align*}
\zeta_{I_w}^X &=X_{I_w^{\rev}}\bullet \pt_e, \mbox{and}\\
\zeta_{I_w}^Y& =Y_{I_w^{\rev}}\bullet \pt_e.
\end{align*} 
Then $\{\zeta_{I_w}^X\}$ forms a basis of $D_F^*$ over $S$, as does $\{\zeta_{I_w}^Y\}$.

Finally, let $\{X_{I_w}^*\},$ (respectively $\{Y_{I_w}^*\}$) be the bases dual to $\{X_{I_w}\}$ (resp. $\{Y_{I_w}\}$) in $\bfD^*_F$, which are also $Q$-basis of $Q_W^*$. 

 The classes $X_{I_v}^*$ for each $v\in W$ are determined by duality.  Under the dual pairing, 
\begin{align*}
\langle X_{I_v}^*, \de_w \rangle &= \langle X_{I_v}^*,\sum_{u\in W} b_{w,I_u}^X X_{I_u} \rangle 
= b_{w, I_v}^X.
\end{align*}
Set
$
X_{I_v}^* = \sum\limits_{u\in W} m_{I_v,u} f_u,
$
which implies
\begin{align*}
\langle X_{I_v}^*, \de_w \rangle &=\langle \sum\limits_{u\in W} m_{I_v,u} f_u, \de_w\rangle = m_{I_v,w}, 
\end{align*}
and thus $X_{I_v}^*=   \sum\limits_{w\in W} b_{w, I_v}^X f_w$. 
\begin{ex}\label{ex:dualbasischange}
Consider the root datum $A_2$, with 
$W=\{e, s_1, s_2, s_1s_2, s_2s_1, w_0\}.$
Fix the reduced sequence $w_0=s_1s_2s_1$. The calculations from Example~\ref{ex:changebasis} imply
\begin{align*}
X_e^*&=\unit=\sum_{w\in W}f_w ,& &X_{(1,2)}^*=x_{\al_1}x_{\al_{13}}(f_{s_1s_2}+f_{w_0})\\
X_{(1)}^*&=-x_{\al_1}(f_{s_1}+f_{s_1s_2})-x_{\al_{13}}f_{s_2s_1}-yf_{w_0} ,& &X_{(2,1)}^*=x_{\al_2}x_{\al_{13}}(f_{s_2s_1}+f_{w_0})\\
X_{(2)}^*&=-x_{\al_2}(f_{s_2}+f_{s_2s_1})-x_{\al_{13}}(f_{s_1s_2}+f_{w_0}),& &X^*_{I_{w_0}}=-x_{\al_1}x_{\al_2}x_{{\al_{13}}}f_{w_0},
\end{align*}
where $y=x_{\al_1}+x_{\al_2}-\kappa_{\al_1} x_{\al_1}x_{\al_2}$. In case $F=F_a$ or $F_m$, we have $y=x_{\al_{13}}$.
\end{ex}
The following proposition explains the relationship between the algebraic construction above and equivariant oriented cohomology of $G/B$.

For each reduced sequence $I_w$, let $\calX_{I_w}\to G/B$ denote the Bott-Samelson resolution. The push-forward in $\hh_T$ of the fundamental class along this resolution is called the {\it Bott-Samelson class of $I_w$}, which we denote by $\eta_{I_w}$.
Define a map
$$
\Phi: \bfD^*_{F}\quad \longrightarrow \quad \hh_T(G/B)
$$
given by $\Phi(\zeta_{I_w}^Y) = \eta_{I_w}$ and $\Phi({\unit}) = [G/B]$, the fundamental class of $G/B$, and extended as a module over $S$.

\begin{prop}\label{prop:Schubert} 
\begin{enumerate} The isomorphism $\Phi$ satisfies the following properties:
\item \cite[Theorem 8.2, Lemma 8.8]{CZZ3} The map $\Phi$ is a functorial isomorphism.
\item \cite[Theorem 14.7]{CZZ2} 
 The basis $\{\Phi(X_{I_w}^*):\ w\in W\}$ (resp. $\{\Phi(Y_{I_w}^*)):\ w\in W\}$) is dual to $\Phi(\zeta^X_{I_w})$ (resp. $\Phi(\zeta^Y_{I_w})$) via the nondegenerate dual pairing on $\hh_T(G/B)$ given by multiplying and pushing forward to a point.
 
\item \cite[Corollary 6.4]{CZZ3}
Let $i_w: wB \hookrightarrow G/B$ be the inclusion of the $T$-fixed point corresponding to $w\in W$, and $(i_w)_*: \hh_T(wB)\rightarrow \hh_T(G/B)$ be the pushforward map. Then  $\Phi(\pt_w) = (i_w)_*(1)$.

\item There is a commutative diagram
\[
\xymatrix{\bfD_F^*\ar@{^(->}[rr]\ar[d]^{ \cong} &  & Q_W^*\ar[d]^{ \cong}\\
        \hh_T(G/B)\ar@{^(->}[rr]^{\bigoplus\limits_{w\in W} i_w^*} &  & Q\otimes_{S}\hh_T(W),}
\]
where the top horizontal map is the embedding of the $S$-module into the $Q$-module $Q_W^*$. 
\end{enumerate}
\end{prop}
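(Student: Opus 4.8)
The statements (1), (2), and (3) are the results of \cite{CZZ2,CZZ3} quoted verbatim, so there is nothing to prove there; all of the work is in (4), and the plan is to reduce its commutativity to a single scalar identity that comes from the projection formula. First I would spell out the three canonical identifications appearing in the square. Since $Q_W$ is free over $Q$ on $\{\de_w\}_{w\in W}$, we have $Q_W^*=\Hom_Q(Q_W,Q)\cong\prod_{w\in W}Q$, sending $\phi$ to $(\langle\phi,\de_w\rangle)_{w}$, that is, $\phi=\sum_w\langle\phi,\de_w\rangle\,f_w$. On the other hand $W$ is a finite set, so $\hh_T(W)=\hh_T(\coprod_{w}\pt)\cong\prod_w\hh_T(\pt)=\prod_w S$, hence $Q\otimes_S\hh_T(W)\cong\prod_w Q$ with the $w$-th factor corresponding to $f_w$; this is the right vertical isomorphism. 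The top map is the tautological inclusion coming from $\bfD_F\otimes_S Q=Q_W$ (the change-of-basis matrix between $\{\de_v\}$ and $\{X_{I_v}\}$ has entries in $S$), hence $\bfD_F^*\otimes_S Q=Q_W^*$; the left map is $\Phi$, and the bottom map $\bigoplus_w i_w^*$ is restriction to the $T$-fixed points, with $\hh_T(wB)=\hh_T(\pt)=S$. With these identifications, commutativity of the square is equivalent to the identity
\[
i_w^*\bigl(\Phi(\phi)\bigr)=\langle\phi,\de_w\rangle\qquad\text{for every }w\in W\text{ and }\phi\in\bfD_F^*,
\]
and after extending $\Phi$ and each $i_w^*$ $Q$-linearly it suffices to prove this for all $\phi\in Q_W^*$.

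To establish the displayed identity, write $\phi=\sum_{v\in W}c_v f_v$ with $c_v\in Q$, so $\langle\phi,\de_w\rangle=c_w$. From the product $f_uf_v=\de_{u,v}f_u$ on $Q_W^*$ together with $\pt_w=w\bigl(\prod_{\al<0}x_\al\bigr)f_w$ we get $\phi\cdot\pt_w=c_w\,\pt_w$. Applying $\Phi$, which is a ring isomorphism by (1) and satisfies $\Phi(\pt_w)=(i_w)_*(1)$ by (3), gives $\Phi(\phi)\cdot(i_w)_*(1)=c_w\,(i_w)_*(1)$ in $Q\otimes_S\hh_T(G/B)$. Now push forward along $\pi\colon G/B\to\pt$. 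By the projection formula the left side becomes $\pi_*\bigl((i_w)_*\bigl(i_w^*\Phi(\phi)\bigr)\bigr)=(\pi\circ i_w)_*\bigl(i_w^*\Phi(\phi)\bigr)=i_w^*\Phi(\phi)$, since $\pi\circ i_w$ is the identity of $\pt$; the right side becomes $c_w\,(\pi\circ i_w)_*(1)=c_w$. Hence $i_w^*\Phi(\phi)=c_w=\langle\phi,\de_w\rangle$, and restricting to $\phi\in\bfD_F^*$ shows the square commutes.

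I expect the only genuinely delicate point to be the bookkeeping in the first paragraph: making the right vertical identification $Q_W^*\cong Q\otimes_S\hh_T(W)$ precise, and checking that all four maps, the pairing $\langle\ ,\ \rangle$, and the projection formula behave well under the $Q$-linear extension, so that the one-line computation of the second paragraph is legitimate; no input from geometry is needed beyond (1) and (3). One could instead verify the identity directly on the $Q$-basis $\{f_v\}$, using disjointness of the $T$-fixed points and the self-intersection formula $i_v^*(i_w)_*(1)=\de_{v,w}\,w\bigl(\prod_{\al<0}x_\al\bigr)$, but this route requires matching the $T$-weights on $T_{wB}(G/B)$ with the normalization built into $\pt_w$, which the projection-formula argument sidesteps.
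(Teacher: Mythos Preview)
Your proposal is correct, and in fact supplies considerably more detail than the paper itself. In the paper, Proposition~\ref{prop:Schubert} is stated with no proof at all: parts (1)--(3) carry explicit citations to \cite{CZZ2,CZZ3}, and part (4) is asserted without argument or citation, presumably because it is regarded as a routine consequence of the identification in (1) together with the fixed-point description in (3) (and the general framework of \cite{CZZ3}). So there is no ``paper's own proof'' to compare against for (4).

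Your argument for (4) via the projection formula is clean and correct. The reduction to the scalar identity $i_w^*\Phi(\phi)=\langle\phi,\de_w\rangle$ is exactly the content of the diagram once the right vertical isomorphism is unpacked, and your derivation of that identity from $\phi\cdot\pt_w=c_w\,\pt_w$, the ring-homomorphism property of $\Phi$, and the projection formula is sound. The one point worth being explicit about is that (1) gives $\Phi$ as a \emph{ring} isomorphism (not merely an $S$-module isomorphism); this is indeed how it is established in \cite{CZZ3}, and your argument uses it when passing from $\phi\cdot\pt_w$ to $\Phi(\phi)\cdot(i_w)_*(1)$. Your closing remark about the alternative route through $i_v^*(i_w)_*(1)$ and the tangent weights at $wB$ is also accurate: that approach works but requires matching the normalization $w\bigl(\prod_{\al<0}x_\al\bigr)$ with the Euler class of the normal bundle, which your projection-formula route avoids.
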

By specializing the formal group law to $F_a$ or $F_m$, respectively, and restricting $S$ to $R[\Lambda]/J_F$, we obtain a map
$\Phi^H:  \bfD_F^*\rightarrow H_T^*(G/B)$ or $\Phi^K:  \bfD_F^*\rightarrow K_T(G/B)$
 to the equivariant cohomology or equivariant K-theory. The map remains an isomorphism over the corresponding module. From now on we will not distinguish between $\bfD_{F}^*$ and $\hh_T(G/B)$. 
\begin{ex}\label{ex:Schubert} Let $X(w)=\overline{BwB/B}$ be the Schubert variety and $Y(w)=\overline{B^-wB/B}$ be the opposite Schubert variety.
For $H_T^*(G/B)$ (with $F=F_a$) or  $K_T(G/B)$ (with $F=F_m$), we write $w$ for $I_w$ since $X_{I_w}$ and $Y_{I_w}$ are independent of the reduced sequence.
\begin{enumerate}
\item \cite[\textsection 1.2]{GK19} For $H_T^*(G/B)$, $\zeta^Y_w=[X(w)]$, and $\zeta^X_w=(-1)^{\ell(w)}[X(w)]$, where each homology class is identified with its  dual cohomology class.
Then $Y_{w}^* = [Y(w)]$ and similarly $X_{w}^* = (-1)^{\ell(w)}[Y(w)]$.
\item \cite[\textsection 3]{AMSS19} For $K_T(G/B)$, $\zeta^Y_w=[\calO_{X(w)}]$ is the class of the structure sheaf of $X(w)$, $Y_w^*=[\calO_{Y(w)}(-\partial Y(w))]$, $\zeta_w^X=(-1)^{\ell(w)}[\calO_{X(w)}(-\partial X(w))]$, and $X^*_w=(-1)^{\ell(w)}[\calO_{Y(w)}]$. 
\end{enumerate}
\end{ex}

\section{Generalized Demazure operators and the generalized  Leibniz rule}\label{se:generalizedDemazure}

In this section, we generalize the operators $X_{I_v}$ and $Y_{I_v}$ on $\hh_T(G/B)$ to a more general class of elements of $Q_W$, and prove the generalized  Leibniz rule for $\bfD_F$ acting on $Q$. We use this result to compute the coproduct structure in $Q_W$, and then the product structure in $Q_W^*$.

Let $\{a_\al, b_\al \in Q:\ \al\in \Sigma\}$ be a set of elements with the property that, for all $w\in W$, 
$$
w(a_\al) = a_{w(\al)}, \quad\quad w(b_\al) = b_{w(\al)}, \text{ and }b_\al\text{ are all invertible in }Q. 
$$
For any simple root $\al$, define operators $Z_\al\in Q_W$ by
$$
Z_\al=a_\al+b_\al\de_\al.
$$
Clearly $X_\al$ and $Y_\al$ result from $Z_\al$ as special cases of $a_\al$ and $b_\al$. For any sequence $I=(i_1,...,i_k)$,
define $Z_I\in Q_W$ by
$$
Z_I=Z_{\al_{i_1}}Z_{\al_{i_2}}\cdots Z_{\al_{i_k}}.
$$
We call $Z_I$ {\it generalized Demazure operators.}

As before, we choose a reduced word expression $I_v$ for each $v\in W$.

\begin{lem} The set of generalized Demazure operators $\{Z_{I_v}\}$ forms a basis of $Q_W$ as a module over $Q$.
\end{lem}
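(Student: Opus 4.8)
The plan is to show that the change-of-basis matrix expressing $\{Z_{I_v}\}$ in terms of the $Q$-basis $\{\delta_v\}$ is invertible over $Q$, by establishing that it is triangular with respect to the Bruhat order with invertible diagonal entries. This mirrors the structure of the cited Lemma for $X_{I_v}$ and $Y_{I_v}$, so the argument should be a direct generalization.

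\textbf{Key steps.} First I would prove, by induction on $\ell(v)=k$, that for a reduced sequence $I_v=(i_1,\dots,i_k)$ one has an expansion
$$
Z_{I_v}=\sum_{u\le v} c_{I_v,u}\,\delta_u,\qquad c_{I_v,u}\in Q,
$$
with the leading coefficient $c_{I_v,v}=b_{\al_{i_1}}\,s_{i_1}(b_{\al_{i_2}})\,s_{i_1}s_{i_2}(b_{\al_{i_3}})\cdots s_{i_1}\cdots s_{i_{k-1}}(b_{\al_{i_k}})$, which is a product of invertible elements of $Q$ (using the hypothesis $w(b_\al)=b_{w(\al)}$ and invertibility of all $b_\al$), hence invertible. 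The base case $k=1$ is immediate from $Z_{\al}=a_\al+b_\al\delta_\al$. For the inductive step, write $Z_{I_v}=Z_{I_{v'}}Z_{\al_{i_k}}$ where $v'=s_{i_1}\cdots s_{i_{k-1}}$ and $I_{v'}=(i_1,\dots,i_{k-1})$ is reduced; multiply the inductive expansion of $Z_{I_{v'}}$ on the right by $a_{\al_{i_k}}+b_{\al_{i_k}}\delta_{i_k}$ using the product rule $(p\delta_u)(p'\delta_{u'})=p\,u(p')\,\delta_{uu'}$ in $Q_W$. Each term $c_{I_{v'},u}\delta_u$ produces $c_{I_{v'},u}u(a_{\al_{i_k}})\delta_u$ and $c_{I_{v'},u}u(b_{\al_{i_k}})\delta_{u s_{i_k}}$; since $u\le v'$, the indices $u$ and $us_{i_k}$ are all $\le v$ by standard properties of Bruhat order (if $u\le v'$ then $u\le v$ and $us_{i_k}\le v$, because $v=v's_{i_k}$ with $\ell(v)=\ell(v')+1$). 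The top index $v=v's_{i_k}$ is attained only from the leading term $c_{I_{v'},v'}\delta_{v'}$ via the $\delta_{v's_{i_k}}$ summand, with coefficient $c_{I_{v'},v'}\,v'(b_{\al_{i_k}})=c_{I_{v'},v'}\,b_{v'(\al_{i_k})}$, invertible by hypothesis; this completes the induction and identifies the leading coefficient.

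\textbf{Conclusion and main obstacle.} Having shown that the matrix $\big(c_{I_v,u}\big)_{v,u\in W}$ is upper triangular for the Bruhat order with invertible diagonal, it is invertible over $Q$; therefore $\{Z_{I_v}\}$ is obtained from the $Q$-basis $\{\delta_v\}$ by an invertible base change and is itself a $Q$-basis of $Q_W$. The main subtlety — and the only place care is needed — is the Bruhat-order bookkeeping in the inductive step: one must check that no cancellation at the top degree can occur (it cannot, since $v$ is reached by a unique term with an invertible coefficient) and that multiplication by $Z_{\al_{i_k}}$ never produces an index exceeding $v$ in Bruhat order. Both facts are elementary consequences of $I_v$ being reduced, so the argument is routine; there is no genuinely hard step, only the need to track indices carefully. (One could alternatively invoke that $\{a_\al,b_\al\}$ specializes to $\{X_\al\}$ and quote the cited result, but the direct inductive proof above is cleaner and self-contained.)
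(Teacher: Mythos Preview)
Your proposal is correct and follows essentially the same approach as the paper: both arguments rest on the invertibility of the $b_\al$ (hence of all $w(b_\al)=b_{w(\al)}$), which makes the change-of-basis matrix from $\{\delta_v\}$ to $\{Z_{I_v}\}$ lower-triangular in Bruhat order with invertible diagonal. The paper states this in one line and relegates the explicit leading-coefficient computation to the proof of the subsequent lemma, whereas you spell out the induction and the Bruhat-order bookkeeping directly; the content is the same.
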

\begin{proof}
This  follows from the fact that $b_\al\in Q$ is invertible for all simple roots $\al$ (hence, for all roots $\al$).
\end{proof}
\begin{rem}
Note that $Z_\al\in \bfD_F$ if and only it satisfies the residue condition \cite[Definition 3.7]{ZZ17}. If this is satisfied, then $Z_{I_v}\in \bfD$ and equivalently, $Z_{I_v}^*\in \bfD_F^*$. Moreover,   $Z_{I_v}$ forms a basis of $\bfD_F$ if and only if $\frac{1}{b_\al}\in S$ for all $\al$. For example, this holds for $X_\al, Y_\al$, but fails for $T_\al$ considered in Section 6 and 7. This is precisely why the stable basis is only a basis after localization.
\end{rem}

\begin{lem}\label{lem:Zcoeff}
For any sequence $J$, define coefficients $c_{J, I_w}\in Q$ by
\begin{equation}\label{eq:Zcoeff}
Z_J=\sum_{w\in W}c_{J, I_w}Z_{I_w}, 
\end{equation}
Then $c_{J,I_w}=0$ unless $w\le \wt \prod J$. 
\end{lem}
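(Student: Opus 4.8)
The plan is to prove the statement by induction on the length $k$ of the sequence $J=(j_1,\dots,j_k)$, peeling off the first (or last) factor and using Lemma~\ref{lem:1}.\eqref{item:1} together with the twisted commutation rule $Z_\al p = s_\al(p)Z_\al + (\text{something in }Q)$ implicit in the relation $\de_\al p = s_\al(p)\de_\al$.

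\medskip
\emph{Base case and reduction.} For $k=0$ the sequence is empty, $Z_\emptyset = 1 = Z_{I_e}$, and $e\le \wt\prod\emptyset = e$, so the claim holds. Suppose the claim holds for all sequences of length $<k$, and write $J = (j_1, J')$ with $J'=(j_2,\dots,j_k)$ and $u := \wt\prod J'$. By induction, $Z_{J'} = \sum_{v\le u} c_{J',I_v} Z_{I_v}$, hence
\begin{equation*}
Z_J = Z_{\al_{j_1}} Z_{J'} = \sum_{v\le u} Z_{\al_{j_1}}\bigl(c_{J',I_v} Z_{I_v}\bigr).
\end{equation*}
Now $Z_{\al_{j_1}} q = (a_{j_1} + b_{j_1}\de_{j_1})q = \bigl(a_{j_1} q + b_{j_1} s_{j_1}(q)\bigr)\cdot 1 + b_{j_1} s_{j_1}(q)\bigl(\de_{j_1} - 1\bigr)$ for $q\in Q$, but the cleanest bookkeeping is simply: $Z_{\al_{j_1}} c_{J',I_v} = q_v' \cdot 1 + q_v''\de_{j_1}$ for suitable $q_v', q_v''\in Q$, so $Z_{\al_{j_1}}(c_{J',I_v} Z_{I_v})$ is a $Q$-combination of $Z_{I_v}$ and $\de_{j_1} Z_{I_v}$. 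Thus it suffices to show: for each $v\le u$, when $\de_{j_1} Z_{I_v}$ (equivalently $Z_{\al_{j_1}} Z_{I_v}$, since $\de_{j_1} = b_{j_1}^{-1}(Z_{\al_{j_1}} - a_{j_1})$) is re-expanded in the basis $\{Z_{I_w}\}$, only $w$ with $w\le \wt\prod J$ appear.

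\medskip
\emph{The key geometric step.} Fix $v\le u$ and pick the reduced word $I_v=(i_1,\dots,i_\ell)$. Then $Z_{\al_{j_1}} Z_{I_v} = Z_{(j_1,i_1,\dots,i_\ell)}$, which is an honest product of generators, so it lies in $\bfD_F$-like territory but we only need the basis expansion in $Q_W$. The claim to establish is that the coefficient of $Z_{I_w}$ in $Z_{(j_1,i_1,\dots,i_\ell)}$ vanishes unless $w\le \wt\prod(j_1,i_1,\dots,i_\ell) = s_{j_1}*v$ (Demazure product), and then to observe $s_{j_1}*v \le s_{j_1}*u = \wt\prod J$ by monotonicity of the Demazure product in each argument under Bruhat order. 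This reduces everything to the single statement: \emph{$Z_{\al}Z_{I_v}$ expands with support in $\{w : w\le s_\al * v\}$}, which one proves by a second induction, this time on $\ell(v)$, splitting into the two cases $s_\al v > v$ and $s_\al v < v$. If $s_\al v > v$, then $(\al, I_v)$ is itself reduced (for $v\le u$ with a chosen reduced word — here one must be slightly careful and instead argue via the $\de$-basis, see below), so $Z_\al Z_{I_v}$ is already a basis element up to nothing. If $s_\al v < v$, write $v = s_\al v'$ with $\ell(v')=\ell(v)-1$ and use $Z_\al Z_\al = (\text{from Lemma~\ref{lem:1}.\eqref{item:1}-type relation for }Z)$, i.e. $Z_\al^2 \in Q\cdot Z_\al + Q\cdot 1$, to collapse one factor.

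\medskip
\emph{Cleaner route via the $\de$-basis, and the main obstacle.} In fact the most robust approach avoids the braid-relation subtleties of $Z$ entirely: expand $Z_J$ in the $\{\de_w\}$ basis of $Q_W$ directly. Since $Z_{\al} = a_\al + b_\al\de_\al$ and $(p\de_w)(p'\de_{w'}) = pw(p')\de_{ww'}$, an easy induction shows $Z_J = \sum_{w} d_{J,w}\de_w$ where $d_{J,w}\in Q$ and $d_{J,w} = 0$ unless $w\le \wt\prod J$ — this last fact is precisely the statement that the support of the Demazure-product expansion of a product of $(1 + \de_{j})$-type operators lies below the Demazure product, which is standard (it is the $Z$-analogue of the lower-triangularity in Lemma~\cite[Lemma 3.2]{CZZ2}, proved the same way by induction using $\de_\al\de_w = \de_{s_\al w}$ and absorbing the $\al$ into a sub-Demazure-product). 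Then combine with the \emph{unitriangular} change of basis $\de_w = \sum_{v\le w} (\ast) Z_{I_v}$ (Lemma~\ref{lem:1}-style, valid since each $b_\al$ is invertible, giving lower-triangularity with respect to Bruhat order): substituting yields $Z_J = \sum_{w\le\wt\prod J}\sum_{v\le w}(\ast) Z_{I_v}$, and every $v$ occurring satisfies $v\le w\le\wt\prod J$. The main obstacle is thus pinpointed and modest: one must verify the lower-triangularity of the $\{\de_w\}\to\{Z_{I_v}\}$ change of basis (that $\de_w$ is a $Q$-combination of $Z_{I_v}$ with $v\le w$ only), and verify that products of the elementary operators $a_{j}+b_{j}\de_{j}$ have $\de$-support bounded by the Demazure product — both are straightforward inductions, the second requiring only the elementary fact that for $v\le w$ in Bruhat order, $s_\al*v \le s_\al * w$ and $v\le s_\al*w$, so no hard input beyond what is already cited is needed.
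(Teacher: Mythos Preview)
Your ``cleaner route via the $\de$-basis'' is correct and is essentially the paper's own proof. The paper argues exactly as you do: first, $Z_J$ expanded in the $\{\de_v\}$ basis has support contained in $\{v : v\le \wt\prod J\}$ (an easy induction on the length of $J$, using the subword characterization of Bruhat order and the Demazure product); second, the change of basis between $\{\de_v\}$ and $\{Z_{I_v}\}$ is lower-triangular with invertible diagonal (the diagonal entry being a product of $W$-conjugates of the invertible $b_\al$'s). The paper packages the second step as a maximality/contradiction argument on the set $\{w : c_{J,I_w}\neq 0,\ w\not\le\wt\prod J\}$ rather than invoking the inverse triangularity directly, but this is the same linear-algebra content.

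Your first approach, by contrast, was rightly abandoned. The case split on $s_\al v \gtrless v$ does not work as stated: when $s_\al v > v$, the concatenation $(\al, I_v)$ is a reduced word for $s_\al v$ but need not equal the \emph{chosen} reduced word $I_{s_\al v}$, so $Z_\al Z_{I_v}$ is not a basis element; and when $s_\al v < v$, the chosen word $I_v$ need not begin with $\al$, so you cannot write $Z_{I_v} = Z_\al Z_{I_{v'}}$ and collapse via $Z_\al^2$. Since the $Z_\al$ do not satisfy braid relations for general $a_\al, b_\al$, there is no way to repair this without passing through the $\de$-basis anyway --- which is exactly what your cleaner route (and the paper) does.
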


\begin{proof} 
Clearly $Z_\al = a_\al+b_\al\delta_\al$ has support on $\{w:\ w\leq s_\al\}$.  An immediate observation of the product in $Q_W$ shows inductively that $Z_J$ may be expressed as a $Q$-linear combination of $\de_v$ for $v\leq \wt\prod J$.  

For any $v\in W$ and reduced sequence $I_v=(i_1,\dots, i_k)$, let $\ga_j = \al_{i_j}$ for $j=1,\dots, k$. The coefficient of $\de_v$ in $Z_{I_v}$ is 
$$
b_{\ga_1} s_{\ga_1}(b_{\ga_2}) s_{\ga_1}s_{\ga_2}(b_{\ga_3})\dots s_{\ga_1}\dots s_{\ga_{k-1}} (b_{\ga_k}).
$$
 In particular, since $b_{\ga_j}$ is invertible, so is $w(b_{\ga_j})$ for any Weyl group element $w$, and thus the coefficient of $\de_v$ in $Z_{I_v}$ is nonzero.

Let $A = \{w\in W:  c_{J,I_w}\neq 0 \mbox{ and } w\not \leq \wt \prod J\}$, and assume $A$ is nonempty. Pick $v\in A$ to be a maximal element of $A$ in the Bruhat order. By support considerations, the only terms contributing to the coefficient of $\de_v$ in \eqref{eq:Zcoeff} is $c_{J,I_v}Z_{I_v}$. Since the coefficient of $\de_v$ in $Z_{I_v}$ is a unit, we conclude $c_{J,I_v}=0$, contrary to assumption.
\end{proof}

The structure constants $c_{J,I_w}$ reflect geometric properties in some special cases (see Section~\ref{sec:cohK}). When $Z_\al =X_\al$ for all $\al$ or $Z_\al=Y_\al$ for all $\al$, and $F=F_a$, the coefficients in 
the sum \eqref{eq:Zcoeff} vanish unless $J$ is a reduced word for $w$, in which case $c_{J,I_w}=1$;  this reflects the property that the pushforward map in homology sends the orientation class $[BS_J]$ to the Schubert variety $X(w)$ when $J$ is a reduced word for $w$. 
When  $Z_\al =X_\al$ for all $\al$ or $Z_\al=Y_\al$ for all $\al$, and $F=F_m$, coefficients vanish except when the Demazure product of $J$ is $w$, which occurs exactly once and results in $c_{J,I_w}=1$.
 In this case, the K-theoretic pushforward of $[\mathcal O_{BS_J}]$ is the structure sheaf of $X(w)$ when  $w=\wt \prod J$.
More generally, $Z_J$ is an (equivariant) operator whose dual has support only on those fixed points in the Schubert variety $X(w)$, where $w=\wt \prod J$.

We have the following lemma describing the action of $Z_\al$ on a product.
\begin{lem}\label{le:Zi1} For a simple root $\al$, and $p,q\in Q$, we have 
\begin{equation*}
Z_\al\cdot (pq)=\frac{a_\al(a_\al+b_\al)}{b_\al}pq-\frac{a_\al}{b_\al}\left[\left(Z_\al\cdot p\right)q+p(Z_\al\cdot q)\right]+\frac{1}{b_\al}(Z_\al\cdot p)(Z_\al\cdot q). 
\end{equation*}
\end{lem}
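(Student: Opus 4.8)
The statement is a closed-form expression for the "second-order Leibniz rule" for $Z_\al = a_\al + b_\al\de_\al$, so the natural approach is a direct computation starting from how $Z_\al$ acts on $Q$. I would first record the two first-order facts: $Z_\al\cdot p = a_\al p + b_\al s_\al(p)$ for $p\in Q$, and hence $Z_\al\cdot(pq) = a_\al pq + b_\al s_\al(p)s_\al(q)$. The entire content of the lemma is that the "twisted" term $s_\al(p)s_\al(q)$ can be rewritten using only the quantities $p$, $q$, $Z_\al\cdot p$, $Z_\al\cdot q$ and the coefficients $a_\al, b_\al$ — with no residual appearance of $s_\al$.

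The key algebraic step is to solve for $s_\al(p)$ in terms of $p$ and $Z_\al\cdot p$: from $Z_\al\cdot p = a_\al p + b_\al s_\al(p)$ and the invertibility of $b_\al$ we get
\[
s_\al(p) = \frac{1}{b_\al}\bigl(Z_\al\cdot p - a_\al p\bigr),
\]
and likewise for $q$. Substituting both into $Z_\al\cdot(pq) = a_\al pq + b_\al\, s_\al(p)s_\al(q)$ gives
\[
Z_\al\cdot(pq) = a_\al pq + \frac{1}{b_\al}\bigl(Z_\al\cdot p - a_\al p\bigr)\bigl(Z_\al\cdot q - a_\al q\bigr),
\]
where I have used $s_\al(b_\al) = b_{s_\al(\al)} = b_{-\al}$; so I must be slightly careful — in fact what is needed is that $b_\al\cdot s_\al(p)s_\al(q) = b_\al\cdot s_\al(pq)$, i.e.\ I apply $s_\al$ to the product $pq$ directly and then use $s_\al(pq) = \frac{1}{s_\al(b_\al)}\cdot\frac{1}{s_\al(b_\al)}\cdots$. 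Cleaner: write $Z_\al\cdot(pq) = a_\al pq + b_\al s_\al(pq)$, and separately $s_\al(pq) = s_\al(p)s_\al(q) = \frac{1}{b_\al}(Z_\al\cdot p - a_\al p)\cdot\frac{1}{b_\al}(Z_\al\cdot q - a_\al q)$ since each of $s_\al(p), s_\al(q)$ is expressed via $b_\al$ (not $s_\al(b_\al)$) by the first-order identity. This yields $b_\al s_\al(pq) = \frac{1}{b_\al}(Z_\al\cdot p - a_\al p)(Z_\al\cdot q - a_\al q)$.

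Finally I would expand the product $(Z_\al\cdot p - a_\al p)(Z_\al\cdot q - a_\al q) = (Z_\al\cdot p)(Z_\al\cdot q) - a_\al p(Z_\al\cdot q) - a_\al(Z_\al\cdot p)q + a_\al^2 pq$, divide by $b_\al$, add $a_\al pq$, and collect the $pq$ coefficient: $a_\al + a_\al^2/b_\al = a_\al(b_\al + a_\al)/b_\al = a_\al(a_\al+b_\al)/b_\al$. This matches the stated formula term by term. There is no real obstacle here; the only point requiring a moment's care is the bookkeeping with $s_\al(b_\al)$ versus $b_\al$, which is handled by applying the first-order Leibniz identity $Z_\al\cdot r = a_\al r + b_\al s_\al(r)$ uniformly (to $p$, to $q$, and to $pq$) rather than trying to distribute $s_\al$ across the product first and re-derive the coefficient. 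Everything else is elementary algebra in the ring $Q$, using only that $b_\al$ is invertible.
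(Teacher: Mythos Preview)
Your proof is correct and follows essentially the same approach as the paper, which simply says to plug in $Z_\al = a_\al + b_\al\de_\al$, use $\de_\al\cdot p = s_\al(p)$, and compare both sides. Your version is slightly more constructive in that you solve for $s_\al(p)$ and substitute rather than merely verifying, but the underlying computation is identical; your momentary worry about $s_\al(b_\al)$ is unnecessary since $b_\al$ sits as a left $Q$-coefficient and is never acted on by $\de_\al$.
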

\begin{proof}One just has to plug in $Z_\al=a_\al+b_\al\de_{s_\al}$ and use the definition of the action $\de_{s_\al}\cdot p=s_\al(p)$. A comparison of both sides yields the identity.
\end{proof}

The coefficients occurring in Lemma~\ref{le:Zi1} may be generalized to the case of the action of $Z_I$ on a product $pq$. 
\begin{defn}\label{def:LeibnizCoeff} For each simple root $\al$, let $Z_\al = a_\al + b_\al \de_\al$ with $a_\al, b_\al\in Q$ and $b_\al$  invertible.  
Let $I = (i_1,\dots, i_k)$ be a sequence of indices of simple roots, with $\ga_j := \al_{i_j}$ corresponding to the $j$th entry of $I$. 
For $E, F\subset \{1,\dots k\}$,
define the {\bf Leibniz coefficients} $\bfz_{E,F}^I\in Q$ by
\begin{equation}\label{eq:Leibniz}
\bfz^I_{E,F}=(B^Z_1 B^Z_2\cdots  B^Z_k)\cdot 1,
\end{equation}
 where the operators $B^Z_j\in Q_W$ are given by 
\begin{equation}\label{eq:BZ}
B_j^Z=
\begin{cases}
\frac{1}{b_{\ga_j}}\de_{\ga_j}, &\text{ if }j\in E\cap F,\\
\vspace{-.1in}\\
-\frac{a_{\ga_j}}{b_{\ga_j}}\de_{\ga_j}, &\text{ if }j\in  E \text{ or }F, \text{ but not both},\\
\vspace{-.1in}\\
a_{\ga_j}+\frac{a_{\ga_j}^2}{b_{\ga_j}}\de_{\ga_j}, &\text{ if }j\not\in E\cup F.\\
\end{cases}
\end{equation}
\end{defn}

\begin{ex}\label{ex:LeibnizCoeffXY} Let $\ga_j = \al_{i_j}$ indicate the $j$th root listed in the sequence $I$.
If $Z=X$, then using the specific choice of coefficients for the Demazure operator yields
$$
B_j^X=
\begin{cases}
-x_{\ga_j}\de_{\ga_j}, &\text{ if }j\in E\cap F,\\
\vspace{-.1in}\\
\de_{\ga_j}, &\text{ if }j\in  E \text{ or }F, \text{ but not both},\\
\vspace{-.1in}\\
X_{\ga_j}, &\text{ if }j\not\in E\cup F.\\
\end{cases}
$$
Similarly, if $Z=Y$ indicate the push-pull operators,
$$
B_j^Y=
\begin{cases}
x_{\ga_j}\de_{\ga_j}, &\text{ if }j\in E\cap F,\\
\vspace{-.1in}\\
\frac{x_{\ga_j}}{x_{-\ga_j}}\de_{\ga_j}, &\text{ if }j\in  E \text{ or }F, \text{ but not both},\\
\vspace{-.1in}\\
\frac{1}{x_{-\ga_j}} +\frac{x_{\ga_j}}{(x_{-\ga_j})^2}\de_{\ga_j} , &\text{ if }j\not\in E\cup F.\\
\end{cases}
$$
\end{ex}

Now we prove the main technical result of this paper, generalizing \cite[Lemma 4.8]{CZZ1}. 

\begin{thm}[Generalized Leibniz Rule]\label{thm:Leibniz} 
Let $Z_I$ be a generalized Demazure operator for $I=(i_1,...,i_k)$, and let $\ga_j = \al_{i_j}$ denote the $j$th simple root in the list. Then for any $p,q\in Q$, 
\[
Z_I\cdot (pq)=\sum_{E,F\subset [k]}\bfz^I_{E,F}(Z_{E}\cdot p)( Z_{F}\cdot q), 
\]
where $\bfz^I_{E,F}$ are the Leibniz coefficients defined in \eqref{eq:Leibniz}
\end{thm}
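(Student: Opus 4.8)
The plan is to prove this by induction on $k$, the length of the sequence $I$, with Lemma~\ref{le:Zi1} serving as the base case ($k=1$) after one checks that the three coefficients $\frac{a_\al(a_\al+b_\al)}{b_\al}$, $-\frac{a_\al}{b_\al}$, and $\frac{1}{b_\al}$ appearing there are exactly the Leibniz coefficients $\bfz^{(i)}_{E,F}$ for the four subsets $E,F\subset\{1\}$. Indeed, $\bfz^{(i)}_{\emptyset,\emptyset}=(a_\al+\frac{a_\al^2}{b_\al}\de_\al)\cdot 1 = a_\al + \frac{a_\al^2}{b_\al} = \frac{a_\al(a_\al+b_\al)}{b_\al}$, $\bfz^{(i)}_{\{1\},\emptyset}=\bfz^{(i)}_{\emptyset,\{1\}}=-\frac{a_\al}{b_\al}$, and $\bfz^{(i)}_{\{1\},\{1\}}=\frac{1}{b_\al}$, so Lemma~\ref{le:Zi1} literally reads $Z_\al\cdot(pq)=\sum_{E,F\subset\{1\}}\bfz^{(i)}_{E,F}(Z_E\cdot p)(Z_F\cdot q)$, which is the theorem for $k=1$.

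For the inductive step, write $I=(i_1, I')$ where $I'=(i_2,\dots,i_k)$, so $Z_I = Z_{\ga_1} Z_{I'}$ with $\ga_1=\al_{i_1}$. Apply the induction hypothesis to $Z_{I'}$ to get $Z_{I'}\cdot(pq)=\sum_{E',F'\subset\{2,\dots,k\}}\bfz^{I'}_{E',F'}(Z_{E'}\cdot p)(Z_{F'}\cdot q)$. Then $Z_I\cdot(pq) = Z_{\ga_1}\cdot\bigl(Z_{I'}\cdot(pq)\bigr)$ is a $Z_{\ga_1}$-action on a sum of products; I expand using Lemma~\ref{lem:1}.\eqref{item:2}, i.e. the rule $Z_{\ga_1}\cdot(fg) = a_{\ga_1} fg + b_{\ga_1} s_{\ga_1}(f)s_{\ga_1}(g)$ for $f,g\in Q$, treating $f=\bfz^{I'}_{E',F'}(Z_{E'}\cdot p)$ (or a further split of it) and $g=(Z_{F'}\cdot q)$. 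More precisely, since $\bfz^{I'}_{E',F'}$ is itself a scalar in $Q$ and $Z_{\ga_1}$ does not commute with it, I first use $Z_{\ga_1}\cdot(\bfz^{I'}_{E',F'}\,uv) = a_{\ga_1}\bfz^{I'}_{E',F'}uv + b_{\ga_1}s_{\ga_1}(\bfz^{I'}_{E',F'})s_{\ga_1}(u)s_{\ga_1}(v)$ where $u=Z_{E'}\cdot p$, $v = Z_{F'}\cdot q$, and then rewrite $s_{\ga_1}(u) = Z_{\ga_1}\cdot u - \tfrac{a_{\ga_1}}{b_{\ga_1}}u$ wait — more cleanly, $b_{\ga_1}s_{\ga_1}(u) = Z_{\ga_1}\cdot u - a_{\ga_1} u$, so each factor $s_{\ga_1}(u)$ gets traded for a combination of $u$ and $Z_{\ga_1}\cdot u = Z_{(i_1,E')}\cdot p$. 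Carrying this out and collecting terms, the coefficient $a_{\ga_1}$ (both factors untouched: $1\notin E,F$), $-\tfrac{a_{\ga_1}}{b_{\ga_1}}$ (one factor gets a $Z_{\ga_1}$: $1\in E$ xor $1\in F$), and $\tfrac{1}{b_{\ga_1}}$ (both factors get a $Z_{\ga_1}$: $1\in E\cap F$) appear precisely as the operators $B_1^Z$ acting, and the $s_{\ga_1}$'s that remain compose with $B_2^Z\cdots B_k^Z$ to produce $\bfz^I_{E,F} = (B_1^Z B_2^Z\cdots B_k^Z)\cdot 1$ where $E\cap\{2,\dots,k\}=E'$, $F\cap\{2,\dots,k\}=F'$. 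The bookkeeping here — matching the four cases of how $1$ is distributed between $E$ and $F$ against the four summands produced by expanding $Z_{\ga_1}\cdot(\text{product of three factors})$, and checking that the leftover $s_{\ga_1}$ prefactor is exactly what turns $B_2^{Z}\cdots B_k^{Z}$ evaluated on the old word into $B_2^{Z}\cdots B_k^{Z}$ evaluated in the new word via the identity $\de_{\ga_1}(B_2^Z\cdots B_k^Z\cdot 1) = (\de_{\ga_1}B_2^Z\cdots B_k^Z\de_{\ga_1}^{-1})\cdot\de_{\ga_1}\cdot 1$ — is the main obstacle, since it requires care about where the Weyl-group twist lands and uses the hypothesis $w(a_\al)=a_{w(\al)}$, $w(b_\al)=b_{w(\al)}$ to keep the $B_j^Z$ in the stated form after conjugation.

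An alternative, perhaps cleaner route that I would pursue in parallel: phrase everything in $Q_W$ via the coproduct \eqref{eq:coprod}. The identity $Z_I\cdot(pq) = \Delta(Z_I)\cdot(p\otimes q)$ reduces the theorem to the operator identity $\Delta(Z_I) = \sum_{E,F\subset[k]}\bfz^I_{E,F}\,Z_E\otimes Z_F$ in $Q_W\otimes_Q Q_W$; since $\Delta$ is an algebra homomorphism, one computes $\Delta(Z_I)=\prod_{j=1}^k\Delta(Z_{\ga_j})$ with $\Delta(Z_{\ga_j}) = a_{\ga_j}(1\otimes 1) + b_{\ga_j}(\de_{\ga_j}\otimes\de_{\ga_j})$, and then the combinatorics of expanding this product of $k$ two-term factors is exactly indexed by pairs $(E,F)$: for each $j$, the choice is "take the $a_{\ga_j}$ term" vs "take the $b_{\ga_j}(\de_{\ga_j}\otimes\de_{\ga_j})$ term", but one must further reconcile the symmetric tensor $\de_{\ga_j}\otimes\de_{\ga_j}$ with the asymmetric target $Z_E\otimes Z_F$ by inserting $\de_{\ga_j} = \tfrac{1}{b_{\ga_j}}(Z_{\ga_j}-a_{\ga_j})$ on each side, which splits each "$b$-term" into the four cases of $B_j^Z$. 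Pushing the scalars $a_{\ga_j},b_{\ga_j}$ to the left past the $\de$'s (introducing the requisite $s_{\ga_j}$-twists) and recognizing the accumulated left coefficient as $(B_1^Z\cdots B_k^Z)\cdot 1$ completes the identification. I expect this version to make the "main obstacle" — the conjugation bookkeeping — more transparent, because the twists are visibly the images of the $a,b$'s under the partial products of $\de$'s, which are governed by the $W$-equivariance hypothesis. I would present whichever of the two is shorter, most likely the coproduct version, with the induction on $k$ as the underlying engine.
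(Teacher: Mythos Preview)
Your proposal is correct and follows essentially the same inductive strategy as the paper: induction on $k$ with Lemma~\ref{le:Zi1} as the base case, applying the hypothesis to the tail $I'=(i_2,\dots,i_k)$, then handling the outer $Z_{\ga_1}$. The paper organizes the inductive step slightly differently---it applies \eqref{eq:Zi2} first to separate the scalar $\bfz^{I'}_{E',F'}$, then Lemma~\ref{le:Zi1} to the remaining product, then simplifies via \eqref{eq:case3}---whereas you use the raw action $Z_{\ga_1}\cdot(fg)=a_{\ga_1}fg+b_{\ga_1}s_{\ga_1}(f)s_{\ga_1}(g)$ and substitute $s_{\ga_1}(u)=\tfrac{1}{b_{\ga_1}}(Z_{\ga_1}\cdot u-a_{\ga_1}u)$; these unwind to exactly the same four terms.

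One comment: the ``main obstacle'' you flag is a phantom. The operators $B_j^Z$ for $j\ge 2$ that define $\bfz^{I'}_{E',F'}$ are \emph{literally the same} elements of $Q_W$ as those defining $\bfz^{I}_{E,F}$ (both use $\ga_j=\al_{i_j}$ and the same membership data $j\in E,F$), so the needed identity $\bfz^{I}_{E,F}=B_1^Z\cdot\bfz^{I'}_{E',F'}$ is just associativity of the $Q_W$-action on $Q$; no conjugation $\de_{\ga_1}B_j^Z\de_{\ga_1}^{-1}$ arises, and the $W$-equivariance hypothesis on $a_\al,b_\al$ is not invoked at this step. (Also, the rule you attribute to Lemma~\ref{lem:1}.\eqref{item:2} is not that lemma---which concerns $X_\al$ in $Q_W$---but simply the definition of the action of $a_{\ga_1}+b_{\ga_1}\de_{\ga_1}$ on $Q$.) With that red herring removed, your direct route is if anything shorter than the paper's.
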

\begin{proof}
For any simple root $\al$, observe the following two identities:
\begin{align}
&a_{\al}(1-\de_{\al})+\frac{a_{\al}(a_{\al}+b_{\al})}{b_{\al}}\de_{\al}=a_{\al}+\frac{a_{\al}^2}{b_{\al}}\de_{\al}=\frac{a_{\al}}{b_{\al}}Z_\al\de_\al ,\label{eq:case3}
\\
&Z_\al\cdot (pq)=a_\al(p-s_\al(p))q+s_\al(p)(Z_\al\cdot q). \label{eq:Zi2}
\end{align}
We prove the theorem by induction on $k$. If $k=1$, the theorem holds by Lemma~\ref{le:Zi1}.

\junk{Now assume it holds for all $I$ with $\ell(I)<k$, and let $I=(i_1,...,i_{k})$. Denote $J=(i_2,...,i_{k})$. We have
\begin{align*}
&Z_I\cdot(pq)\\
&=(Z_{i_1} Z_J)\cdot (pq)\\
&=Z_{i_1}\cdot [\sum_{E_1,E_2\subset [k-1]}\bfz^{J}_{E_1,E_2}(Z_{J \vert_{E_1}}\cdot p) (Z_{J \vert_{E_2}}\cdot q)]\\
&\overset{\eqref{eq:Zi2}}=\sum_{E_1,E_2\subset [k-1]}a_{i_1}[\bfz^{J}_{E_1,E_2}-s_{i_1}(\bfz^{J}_{E_1,E_2})](Z_{J \vert_{E_1}}\cdot p)(Z_{J \vert_{E_2}}\cdot q)\\
&+\sum_{E_1,E_2\subset [k-1]}s_{i_1}(\bfz^{J}_{E_1,E_2})Z_{i_1}\cdot [(Z_{J \vert_{E_1}}\cdot p)(Z_{J \vert_{E_2}}\cdot q)]\\
&\overset{Lemma \ref{le:Zi1}}=\sum_{E_1,E_2\subset [k-1]}a_{i_1}[\bfz^{J}_{E_1,E_2}-s_i(\bfz^{J}_{E_1,E_2})](Z_{J \vert_{E_1}}\cdot p)(Z_{J \vert_{E_2}}\cdot q)\\
&+\sum_{E_1,E_2\subset [k-1]}s_{i_1}(\bfz^{J}_{E_1,E_2})\frac{a_{i_1}(a_{i_1}+b_{i_1})}{b_{i_1}}(Z_{J \vert_{E_1}}\cdot p)(Z_{J \vert_{E_2}}\cdot q)\\
&-\sum_{E_1,E_2\subset [k-1]}s_{i_1}(\bfz^{J}_{E_1,E_2})\frac{a_{i_1}}{b_{i_1}}\left[[Z_i\cdot (Z_{J \vert_{E_1}}\cdot p)](Z_{J \vert_{E_2}}\cdot q)+(Z_{J \vert_{E_1}}\cdot p)[Z_{i_1}\cdot (Z_{J \vert_{E_2}}\cdot q)]\right]\\
&+\sum_{E_1,E_2\subset [k-1]}s_{i_1}(\bfz^{J}_{E_1,E_2})\frac{1}{b_{i_1}}(Z_{i_1}\cdot Z_{J \vert_{E_1}}\cdot p)(Z_{i_1}\cdot Z_{J \vert_{E_2}}\cdot q)\\
&\overset{\eqref{eq:case3}}=\sum_{E_1,E_2\subset [k-1]}[\frac{a_{i_1}}{b_{i_1}}Z_{i_1}\de_{i_1}\cdot \bfz^{J}_{E_1,E_2}](Z_{J \vert_{E_1}}\cdot p)(Z_{J \vert_{E_2}}\cdot q)\\
&-\sum_{E_1,E_2\subset [k-1]}(\frac{a_{i_1}}{b_{i_1}}\de_{i_1}\cdot \bfz^{J}_{E_1,E_2})\left[[Z_{i_1}\cdot (Z_{J \vert_{E_1}}\cdot p)](Z_{J \vert_{E_2}}\cdot q)+(Z_{J \vert_{E_1}}\cdot p)[Z_{i_1}\cdot (Z_{J \vert_{E_2}}\cdot q)]\right]\\
&+\sum_{\tiny E_1,E_2\subset [k-1]}(\frac{1}{b_{i_1}}\de_{i_1}\cdot \bfz^{J}_{E_1,E_2})(Z_{i_1}\cdot Z_{J \vert_{E_1}}\cdot p)(Z_{i_1}\cdot Z_{J \vert_{E_2}}\cdot q).
\end{align*}
Comparing the coefficients with $B^Z_{1}(\bfz^{J}_{E_1,E_2})$ from  \eqref{eq:BZ}, we see that they coincide. The proof then follows by induction.
}

Now assume it holds for all $I$ with $\ell(I)<k$, and let $I=(i_1,...,i_{k})$. Let $J=(i_2,...,i_{k})$ and let $\al=\al_{i_1}$.
We have
\begin{align*}
Z_I\cdot(pq)&=(Z_\al Z_J)\cdot (pq) = Z_\al\cdot ( Z_J\cdot (pq))\\
&=Z_\al\cdot \left[\sum_{E,F\subset \{2,\dots, k\}}\bfz^{J}_{E,F}(Z_{E}\cdot p) (Z_{F}\cdot q)\right]\\
&=\sum_{E,F\subset  \{2,\dots, k\}}a_\al\left[\bfz^{J}_{E,F}-s_\alpha(\bfz^{J}_{E,F})\right](Z_{E}\cdot p)(Z_{F}\cdot q)\\
&\phantom{"="}+\sum_{E,F\subset  \{2,\dots, k\}}s_\al(\bfz^{J}_{E,F})Z_\alpha\cdot \left[(Z_{E}\cdot p)(Z_{F}\cdot q)\right] \mbox{ by Equation \eqref{eq:Zi2}}\\
&=\sum_{E,F\subset  \{2,\dots, k\}}a_\al\left[\bfz^{J}_{E,F}-s_\al(\bfz^{J}_{E,F})\right](Z_{E}\cdot p)(Z_{F}\cdot q)\\
&+\sum_{E,F\subset  \{2,\dots, k\}}s_\al(\bfz^{J}_{E,F})\frac{a_\al(a_\al+b_\al)}{b_\al}(Z_{E}\cdot p)(Z_{F}\cdot q)\\
&-\sum_{E,F\subset  \{2,\dots, k\}}s_\al(\bfz^{J}_{E,F})\frac{a_\al}{b_\al}\left[(Z_\al Z_{E}\cdot p)](Z_{F}\cdot q)+(Z_{E}\cdot p)(Z_\al Z_{F}\cdot q)\right]\\
&+\sum_{E,F\subset  \{2,\dots, k\}}s_\al(\bfz^{J}_{E,F})\frac{1}{b_\al}(Z_\al Z_{E}\cdot p)(Z_\al Z_{F}\cdot q) \mbox{ by Lemma \ref{le:Zi1}} \\
&=\sum_{E,F\subset  \{2,\dots, k\}}\left[
\left(a_\al+ \frac{a_\al^2}{b_\al}\de_\al\right)\cdot
\bfz^{J}_{E,F}
\right](Z_{E}\cdot p)(Z_{F}\cdot q)\\
&
-\sum_{E,F\subset  \{2,\dots, k\}}\left(\frac{a_\al}{b_\al}\de_\al\cdot \bfz^{J}_{E,F}\right)\left[(Z_\al Z_{E}\cdot p)(Z_{F}\cdot q)+(Z_{E}\cdot p)(Z_\al Z_{F}\cdot q)\right]\\
&+\sum_{\tiny E,F\subset  \{2,\dots, k\}}\left(\frac{1}{b_\al}\de_\al\cdot \bfz^{J}_{E,F}\right)(Z_\al Z_{E}\cdot p)(Z_\al Z_{F}\cdot q) \mbox{ by Equation \eqref{eq:case3}.}
\end{align*}
Comparing the coefficients with $B^Z_{1}\cdot (\bfz^{J}_{E,F})$ from  \eqref{eq:BZ}, we see that they coincide. The proof then follows by induction.
\end{proof}

The following corollary follows immediately. We see in Section~\ref{se:restriction} that the Leibniz coefficients $\bfz^{I}_{[k],E}$ arise as factors in summands of specific structure constants in Schubert calculus, justifying the name. Here $[k]=\{1,2,...,k\}$.
\begin{cor}\label{cor:cons}[Generalized Billey's Formula]
Let $I=(i_1,...,i_k)$ be a sequence of indices of simple roots, and denote 
$m_j = s_{i_1}s_{i_2}\cdots s_{i_{j-1}}(a_{i_j})$ and $n_j = s_{i_1}s_{i_2}\cdots s_{i_{j-1}}(b_{i_j})$. For  $E\subset[k]$, we have
$$
\bfz^{I}_{[k],E}=\bfz^{I}_{E, [k]}=(-1)^{k-\vert E\vert} \prod\limits_{j\in [k]\setminus E} m_j \prod\limits_{j\in [k]} n_j^{-1}.
$$
\end{cor}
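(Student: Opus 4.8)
The plan is to evaluate the Leibniz coefficient $\bfz^I_{[k],E}$ directly from its definition \eqref{eq:Leibniz}, using the observation that taking $E = F = [k]$ forces every index $j$ into the ``both'' case of \eqref{eq:BZ}, so no ``$j\not\in E\cup F$'' terms ever appear. Concretely, when $F = [k]$, for $j\in E$ we have $j\in E\cap F$ and the operator is $B^Z_j = \frac{1}{b_{\ga_j}}\de_{\ga_j}$, while for $j\notin E$ we have $j\in F$ only, so $B^Z_j = -\frac{a_{\ga_j}}{b_{\ga_j}}\de_{\ga_j}$. In either case $B^Z_j = \lambda_j \de_{\ga_j}$ where $\lambda_j \in \{\frac{1}{b_{\ga_j}}, -\frac{a_{\ga_j}}{b_{\ga_j}}\}$ is a single element of $Q$ (no additive ``$a_{\ga_j}$'' piece), so the product $B^Z_1 B^Z_2 \cdots B^Z_k$ is a pure monomial in the $\de$'s with a scalar coefficient, and applying it to $1$ is straightforward.

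First I would compute the product $B^Z_1 \cdots B^Z_k$ in $Q_W$ using the multiplication rule $(p\de_w)(p'\de_{w'}) = p\,w(p')\,\de_{ww'}$. Pushing all the scalars to the left past the $\de$'s, the coefficient of $\de_{\ga_1}\cdots\de_{\ga_k}$ becomes $\lambda_1 \cdot s_{\ga_1}(\lambda_2)\cdot s_{\ga_1}s_{\ga_2}(\lambda_3)\cdots s_{\ga_1}\cdots s_{\ga_{k-1}}(\lambda_k)$. Then applying the resulting element to $1$ kills the $\de$ part (since $\de_w\cdot 1 = 1$), leaving exactly this product of $W$-translated scalars. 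Using the $W$-equivariance hypotheses $w(a_\al) = a_{w(\al)}$, $w(b_\al) = b_{w(\al)}$ — or more simply just tracking the definitions of $m_j$ and $n_j$ — the $j$th factor is $s_{i_1}\cdots s_{i_{j-1}}(\lambda_j)$, which equals $n_j^{-1}$ when $j\in E$ and $-m_j\, n_j^{-1}$ when $j\notin E$. Multiplying these over all $j$ yields a factor of $(-1)$ for each $j\in[k]\setminus E$, a factor $\prod_{j\in[k]\setminus E} m_j$, and a factor $\prod_{j\in[k]} n_j^{-1}$, giving precisely the claimed formula for $\bfz^I_{[k],E}$.

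For the symmetry $\bfz^I_{[k],E} = \bfz^I_{E,[k]}$, I would note that the defining recipe \eqref{eq:BZ} for $B^Z_j$ is manifestly symmetric in the roles of $E$ and $F$: the three cases depend only on whether $j$ lies in $E\cap F$, in exactly one of them, or in neither, and all of these conditions are symmetric under swapping $E\leftrightarrow F$. Hence the operator $B^Z_j$ is literally the same whether we form it from the pair $(E,F) = ([k], E)$ or from $(E,F) = (E,[k])$, so the products agree term by term and $\bfz^I_{[k],E} = \bfz^I_{E,[k]}$ follows at once — this needs no computation at all.

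I do not expect any serious obstacle here; the corollary is essentially a bookkeeping exercise once Theorem~\ref{thm:Leibniz} and Definition~\ref{def:LeibnizCoeff} are in hand. The only point requiring a little care is keeping the $W$-translations straight when commuting scalars past the $\de_{\ga_j}$'s — one must remember that the $j$th scalar gets acted on by $s_{\ga_1}\cdots s_{\ga_{j-1}}$, not by $s_{\ga_j}$ or by the full word — but this is exactly what the definitions of $m_j$ and $n_j$ encode, so matching up the two expressions is immediate. A sanity check against Example~\ref{ex:LeibnizCoeffXY} with $Z = X$ (where $a_{\ga_j} = \frac{1}{x_{\ga_j}}$, $b_{\ga_j} = -\frac{1}{x_{\ga_j}}$, so $\lambda_j = -x_{\ga_j}$ when $j\in E$) would confirm the signs and normalizations.
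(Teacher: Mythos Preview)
Your proposal is correct and matches the paper's approach: the paper states only that the corollary ``follows immediately'' from Theorem~\ref{thm:Leibniz} and Definition~\ref{def:LeibnizCoeff}, and your argument is exactly the direct computation that this phrase points to. The one minor notational wrinkle is that you announce you are computing $\bfz^I_{[k],E}$ but then set ``$F=[k]$'' (i.e.\ you actually compute $\bfz^I_{E,[k]}$ first); since you establish the $E\leftrightarrow F$ symmetry anyway this is harmless, but you may want to tidy the wording.
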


 As a consequence of Theorem~\ref{thm:Leibniz},  \cite[Proposition 9.5]{CZZ1} and the coproduct defined in Equation~\eqref{eq:coprod}, we obtain the following theorem.
\begin{thm}\label{thm:coprod}
Let $Z_\al=a_\al+b_\al\de_\al\in Q_W$ with $b_\al$ invertible, then for any $I=(i_1,...,i_k)$, we have
$$
\tr(Z_I)=\sum_{E,F\subset[k]}\bfz^I_{E, F}Z_{E}\otimes Z_{F},
$$
where $\bfz^I_{E,F}$ are defined in Definition \ref{def:LeibnizCoeff}. 
\end{thm}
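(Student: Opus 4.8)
The plan is to deduce the coproduct formula directly from the Generalized Leibniz Rule (Theorem \ref{thm:Leibniz}) together with the observation, made in Equation \eqref{eq:coprod}, that the coproduct $\tr$ on $Q_W$ is precisely the map that encodes the action of $Q_W$ on products in $Q$. More concretely, recall that an element $\eta = \sum_w q_w \de_w \in Q_W$ satisfies $\eta\cdot(pq) = \sum_w q_w(\de_w\cdot p)(\de_w\cdot q)$, and this is exactly what it means for $\tr(\eta) = \sum_w q_w\,\de_w\otimes\de_w$ to compute $\eta$ acting on a product: writing $\tr(\eta) = \sum_i \eta_i'\otimes\eta_i''$ in any form, one has $\eta\cdot(pq) = \sum_i(\eta_i'\cdot p)(\eta_i''\cdot q)$. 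So the key point is that $\tr$ is characterized by its compatibility with the diagonal action on $Q\otimes_R Q \to Q$, $p\otimes q\mapsto pq$.

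First I would make this characterization precise: the map $Q_W\otimes_Q Q_W \to \Hom_R(Q\otimes_R Q,\ Q)$ sending $\zeta\otimes\zeta'$ to the bilinear form $(p,q)\mapsto (\zeta\cdot p)(\zeta'\cdot q)$ is injective, because $\{\de_v\otimes\de_w\}$ maps to linearly independent forms (evaluate against suitable monomials in $Q$; this uses that $Q$ is large enough, which follows from the localization and the standard Dedekind-type independence of the distinct characters $w\mapsto w(p)$). Granting this injectivity, two elements of $Q_W\otimes_Q Q_W$ that induce the same bilinear action on products of elements of $Q$ must be equal. Then Theorem \ref{thm:Leibniz} says exactly that $Z_I\cdot(pq) = \sum_{E,F\subset[k]}\bfz^I_{E,F}(Z_E\cdot p)(Z_F\cdot q)$ for all $p,q\in Q$, which is the bilinear action induced by $\sum_{E,F}\bfz^I_{E,F}\,Z_E\otimes Z_F$. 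Since $\tr(Z_I)$ induces the same action (by definition of $\tr$ as in \eqref{eq:coprod}, extended multiplicatively: $\tr$ is an algebra homomorphism, so $\tr(Z_I)$ acts on $pq$ the way $Z_I$ does), injectivity forces $\tr(Z_I) = \sum_{E,F\subset[k]}\bfz^I_{E,F}\,Z_E\otimes Z_F$.

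Alternatively, and perhaps more cleanly for the write-up, I would argue by direct induction on $k$ using that $\tr$ is an algebra homomorphism $Q_W\to Q_W\otimes_Q Q_W$. For $k=1$ one checks $\tr(Z_\al) = \tr(a_\al + b_\al\de_\al) = a_\al(1\otimes 1) + b_\al(\de_\al\otimes\de_\al)$, and then verifies this equals $\sum_{E,F\subset\{1\}}\bfz^{(\al)}_{E,F}Z_E\otimes Z_F$ by unwinding the four cases of \eqref{eq:BZ} — this is the same bookkeeping already done in the proof of Lemma \ref{le:Zi1}. For the inductive step, write $Z_I = Z_\al Z_J$, apply $\tr(Z_I) = \tr(Z_\al)\tr(Z_J)$, substitute the inductive formula for $\tr(Z_J)$, and multiply out; the resulting reorganization of terms is literally the same computation as in the induction in the proof of Theorem \ref{thm:Leibniz}, with $p$ and $q$ replaced by the tensor slots. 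The coefficient comparison against $B^Z_1\cdot(\bfz^J_{E,F})$ goes through verbatim.

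The main obstacle is essentially notational rather than mathematical: one must be careful that $\tr$ is indeed a ring homomorphism with the stated values on the generators (this is implicit in \eqref{eq:coprod} but should be stated), and that the pairing/action identification is set up so that "acts the same on all products" really does pin down an element of $Q_W\otimes_Q Q_W$. Once the injectivity of $\zeta\otimes\zeta'\mapsto\big((p,q)\mapsto(\zeta\cdot p)(\zeta'\cdot q)\big)$ is in hand, the theorem is an immediate corollary of Theorem \ref{thm:Leibniz}; if one instead prefers the inductive route, there is nothing new to prove beyond transcribing the Leibniz-rule induction into tensor notation.
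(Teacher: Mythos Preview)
Your proposal is correct and matches the paper's approach: the paper simply states Theorem~\ref{thm:coprod} as a consequence of Theorem~\ref{thm:Leibniz} and Equation~\eqref{eq:coprod}, which is exactly your first route. You are more careful than the paper in flagging the injectivity of $Q_W\otimes_Q Q_W \to \Hom_R(Q\otimes_R Q, Q)$ (equivalently, faithfulness of the $Q_W$-action on $Q$, which holds by the Dedekind-type argument you sketch and is implicit in \cite{CZZ1}); your alternative inductive route is also fine but unnecessary once that point is granted.
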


We  specialize Theorem~\ref{thm:Leibniz} to the elements $X_I$ and $Y_I$. For any index $j$, the operators $B^X_j$ and $B^Y_j$ preserve $S$ under the action of $Q_W$ on $Q$, and thus  $B^X_j, B^Y_j \in \bfD_F$ (see \cite[Remark 7.8]{CZZ1}). The first statement in the next corollary is the result \cite[Proposition 9.5]{CZZ1}. 

\begin{cor} For the  Demazure elements $X_\al$, and $I=(i_1,...,i_k)$,  we have
\[
X_I\cdot (pq)=\sum_{E,F\subset [k]}\bfx^I_{E,F}(X_E\cdot p) (X_{F}\cdot q), 
\]
where $\bfx^I_{E,F}=(B^X_1 B^X_2\cdots  B^X_k)\cdot 1$ with $B^X_j\in \bfD_F$ defined  in Example~\ref{ex:LeibnizCoeffXY}. Similarly, for the  push-pull elements 
$Y_\al$ and $I=(i_1,...,i_k)$, we have
\[
Y_I\cdot (pq)=\sum_{E,F\subset [k]}\bfy^I_{E,F}(Y_{E}\cdot p) (Y_{F}\cdot q), 
\]
where $\bfy^I_{E, F}=(B^Y_1 B^Y_2\cdots  B^Y_k)\cdot 1$, and $B^Y_j\in \bfD_F$ is defined 
in Example~\ref{ex:LeibnizCoeffXY}.
\end{cor}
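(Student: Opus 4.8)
The plan is to derive this corollary as a direct specialization of Theorem~\ref{thm:Leibniz}. The two identities to prove are precisely the statement of that theorem when the general coefficients $a_\al, b_\al$ are chosen so that $Z_\al$ becomes $X_\al$ (that is, $a_\al = \frac{1}{x_\al}$, $b_\al = -\frac{1}{x_\al}$) respectively $Y_\al$ (that is, $a_\al = \frac{1}{x_{-\al}}$, $b_\al = \frac{1}{x_\al}$). So the first step is to check that these choices are legitimate, i.e.\ that they satisfy the hypotheses imposed on $\{a_\al, b_\al\}$ in Section~3: the $W$-equivariance $w(a_\al) = a_{w(\al)}$, $w(b_\al) = b_{w(\al)}$ follows from $w(x_\la) = x_{w(\la)}$, and invertibility of $b_\al$ in $Q$ holds because $x_\al$ is inverted in $Q$ by definition. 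This justifies applying Theorem~\ref{thm:Leibniz} with $Z = X$ and with $Z = Y$, yielding the two displayed formulas with $\bfx^I_{E,F} := \bfz^I_{E,F}$ and $\bfy^I_{E,F} := \bfz^I_{E,F}$ for the corresponding choices.

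Next I would identify the operators $B_j^Z$ of Definition~\ref{def:LeibnizCoeff} under these specializations; this is exactly the content of Example~\ref{ex:LeibnizCoeffXY}, so it suffices to quote it. For $Z = X$: when $j\in E\cap F$ the operator $\frac{1}{b_{\ga_j}}\de_{\ga_j} = -x_{\ga_j}\de_{\ga_j}$; when $j$ lies in exactly one of $E, F$ the operator $-\frac{a_{\ga_j}}{b_{\ga_j}}\de_{\ga_j} = \de_{\ga_j}$; and when $j\notin E\cup F$ the operator $a_{\ga_j} + \frac{a_{\ga_j}^2}{b_{\ga_j}}\de_{\ga_j} = \frac{1}{x_{\ga_j}} - \frac{1}{x_{\ga_j}}\de_{\ga_j} = X_{\ga_j}$. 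The analogous computation for $Z = Y$ gives the three cases $x_{\ga_j}\de_{\ga_j}$, $\frac{x_{\ga_j}}{x_{-\ga_j}}\de_{\ga_j}$, $\frac{1}{x_{-\ga_j}} + \frac{x_{\ga_j}}{(x_{-\ga_j})^2}\de_{\ga_j}$, again matching Example~\ref{ex:LeibnizCoeffXY}. With these identifications $\bfx^I_{E,F} = (B_1^X\cdots B_k^X)\cdot 1$ and $\bfy^I_{E,F} = (B_1^Y\cdots B_k^Y)\cdot 1$ as claimed.

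Finally I would address the membership assertions $B_j^X, B_j^Y\in \bfD_F$ and consequently $\bfx^I_{E,F}, \bfy^I_{E,F}\in S$. The key point, already flagged in the paragraph preceding the corollary, is that each $B_j^X$ (and $B_j^Y$) preserves $S$ under the $Q_W$-action on $Q$: for $j\notin E\cup F$ this is immediate since $B_j^X = X_{\ga_j}$ and $B_j^Y = Y_{\ga_j}$ already lie in $\bfD_F$; for the other two cases one checks directly that $-x_{\ga_j}\de_{\ga_j}$, $\de_{\ga_j}$, $x_{\ga_j}\de_{\ga_j}$, and $\frac{x_{\ga_j}}{x_{-\ga_j}}\de_{\ga_j}$ send $S$ into $S$ (using $\de_{\ga_j}\cdot S\subset S$ and closure of $S$ under multiplication by $x_{\ga_j}$, together with $\frac{x_{\ga_j}}{x_{-\ga_j}} = -1 + (\text{higher order})\in S$ in the $Y$ case), and then invokes \cite[Remark 7.8]{CZZ1} to conclude these operators lie in $\bfD_F$. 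Since $\bfD_F$ is closed under multiplication and $1\in S$, the products $B_1^X\cdots B_k^X$ and $B_1^Y\cdots B_k^Y$ lie in $\bfD_F$ and hence act on $1\in S$ to give elements of $S$. The only mild obstacle here is bookkeeping: one must be careful that the case split in Definition~\ref{def:LeibnizCoeff} is applied consistently and that the identity $\frac{a_\al}{b_\al}Z_\al\de_\al = a_\al + \frac{a_\al^2}{b_\al}\de_\al$ from Equation~\eqref{eq:case3} specializes correctly, but no genuinely new computation beyond Example~\ref{ex:LeibnizCoeffXY} is needed, so the proof is essentially a citation of Theorem~\ref{thm:Leibniz} together with the $\bfD_F$-membership remark.
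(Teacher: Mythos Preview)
Your proposal is correct and follows exactly the approach the paper intends: the paper presents this corollary without a separate proof, treating it as an immediate specialization of Theorem~\ref{thm:Leibniz} together with the $\bfD_F$-membership observation in the preceding paragraph (citing \cite[Remark 7.8]{CZZ1}), and your write-up simply spells out those two ingredients explicitly. The only minor point is that your justification for $\frac{x_{\ga_j}}{x_{-\ga_j}}\in S$ is a bit informal; the clean argument is $\frac{x_{\ga_j}}{x_{-\ga_j}} = x_{\ga_j}\kappa_{\ga_j} - 1 \in S$ since $\kappa_{\ga_j}\in S$.
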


\junk{
\begin{cor}
Let $I=(i_1,...,i_k)$ be a sequence of indices of simple roots, and denote 
$\be_j=s_{i_1}s_{i_2}\cdots s_{i_{j-1}}(\al_{i_j})$. 
 For  $E\subset[k]$, we have
 \begin{align*}
\bfx^{I_w}_{[k], E}&=\bfx^{I_w}_{E, [k]}=(-1)^{|E|}\prod\limits_{j\in E}x_{\be_{j}}, \mbox{and}\\
\bfy^{I_w}_{[k], E}&=\bfy^{I_w}_{E, [k]}=(-1)^{k-|E|}\prod\limits_{j\in [k]} x_{\be_{j}}(\prod\limits_{j\in [k]\setminus E} x_{-\be_{j}})^{-1}.
\end{align*}
\end{cor}
}

\section{The structure constants of equivariant oriented cohomology of flag varieties}\label{sec:prod}

In this section we prove the main result, i.e., the formulas of  structure constants of $Z_{I_w}^*$ in $\hh_T(G/B)$, with resulting formulas for the structure constants of $X_{I_w}^*$ and  of $Y_{I_w}^*$.

Let $\{Z_{I_w}^*\}$ be the basis of $Q_W^*$ (as a module over $Q$) dual to the basis $\{Z_{I_w}\}$ of $Q_W$ introduced in Section~\ref{se:generalizedDemazure}.
\begin{thm}\label{thm:prod}
For any $u,v\in W$, the product $ Z_{I_u}^*\ Z_{I_v}^*$ is given by
\[
Z_{I_u}^*\ Z_{I_v}^*=\sum_{w\ge u, v}\bbz^{I_w}_{I_u,I_v}Z^*_{I_w},
\]
where 
$$
\bbz^{I_w}_{I_u, I_v}=\sum_{E, F\subset [\ell(w)]}\bfz^{I_w}_{E,F} c_{E,I_u} c_{F,I_v}\in Q,
$$
 $\bfz^{I_w}_{E, F}\in Q$ are the Leibniz coefficients given in Definition~\ref{def:LeibnizCoeff}. As before, the $Q$ elements $c_{E,I_u}$ and $c_{F,I_v}$ are defined as constants appearing in the expansion
 \begin{equation}
Z_J=\sum_{w\in W}c_{J, I_w}Z_{I_w}. 
\end{equation}
\end{thm}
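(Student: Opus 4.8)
The strategy is to obtain the product on $Q_W^*$ by dualizing the coproduct formula of Theorem~\ref{thm:coprod}. Recall that the ring structure on $Q_W^*$ is dual to the (cocommutative) coproduct $\Delta$ on $Q_W$; concretely, for $f,g\in Q_W^*$ and $z\in Q_W$ with $\Delta(z)=\sum_i z_i'\otimes z_i''$, one has $\langle fg,z\rangle=\sum_i\langle f,z_i'\rangle\,\langle g,z_i''\rangle$, where $\langle f,-\rangle$ is evaluated on the left tensor slot, $\langle g,-\rangle$ on the right slot, and the two resulting elements of $Q$ are multiplied; since $\langle f,-\rangle$ and $\langle g,-\rangle$ are left $Q$-linear, scalar factors may be pulled through slotwise. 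Since $\{Z_{I_w}^*\}$ is dual to $\{Z_{I_w}\}$, the coefficients in the expansion $Z_{I_u}^*Z_{I_v}^*=\sum_w\bbz^{I_w}_{I_u,I_v}Z_{I_w}^*$ are extracted by pairing against basis elements of $Q_W$: namely $\bbz^{I_w}_{I_u,I_v}=\langle Z_{I_u}^*Z_{I_v}^*,\,Z_{I_w}\rangle$.

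Next I would apply Theorem~\ref{thm:coprod} to write $\Delta(Z_{I_w})=\sum_{E,F\subset[\ell(w)]}\bfz^{I_w}_{E,F}\,Z_{E}\otimes Z_{F}$, where $Z_E$ and $Z_F$ denote the generalized Demazure operators associated to the subwords of $I_w$ indexed by $E$ and $F$. Expanding each of these in the basis $\{Z_{I_x}\}$ via Lemma~\ref{lem:Zcoeff}, say $Z_E=\sum_x c_{E,I_x}Z_{I_x}$ and $Z_F=\sum_y c_{F,I_y}Z_{I_y}$, and then evaluating $Z_{I_u}^*$ on the left slot and $Z_{I_v}^*$ on the right slot of $\Delta(Z_{I_w})$, the duality $\langle Z_{I_u}^*,Z_{I_x}\rangle$ equals $1$ for $x=u$ and $0$ otherwise, and similarly for $Z_{I_v}^*$; hence every summand dies except those with $x=u$ and $y=v$. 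Collecting the surviving scalars gives $\bbz^{I_w}_{I_u,I_v}=\sum_{E,F\subset[\ell(w)]}\bfz^{I_w}_{E,F}\,c_{E,I_u}\,c_{F,I_v}$, which is the claimed formula.

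Finally, for the range of summation note that if $E\subset[\ell(w)]$ then the subword of $I_w$ indexed by $E$ has Demazure product $\wt\prod E\le\wt\prod I_w=w$, since enlarging a subword only moves its Demazure product up in the Bruhat order. Thus, by Lemma~\ref{lem:Zcoeff}, $c_{E,I_u}\ne 0$ forces $u\le\wt\prod E\le w$, and likewise $c_{F,I_v}\ne 0$ forces $v\le w$; so $\bbz^{I_w}_{I_u,I_v}=0$ unless $w\ge u$ and $w\ge v$, and in particular the expansion is a finite sum. The only place where care is needed is the bookkeeping around the relative tensor product in $Q_W\otimes_Q Q_W$ — one must verify that the scalar coefficients $\bfz^{I_w}_{E,F}$, $c_{E,I_u}$ and $c_{F,I_v}$ really can be extracted through the pairing as asserted — but this is a routine consequence of the left $Q$-linearity of the dual basis vectors. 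All of the genuine work was already carried out in Theorem~\ref{thm:Leibniz} and Theorem~\ref{thm:coprod}; the present result is their transport through the pairing between $Q_W$ and $Q_W^*$.
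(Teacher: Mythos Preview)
Your proposal is correct and follows essentially the same approach as the paper: both arguments compute $\langle Z_{I_u}^*Z_{I_v}^*,Z_{I_w}\rangle$ by applying Theorem~\ref{thm:coprod} to $\Delta(Z_{I_w})$, expanding each $Z_E,Z_F$ in the basis $\{Z_{I_x}\}$ via the coefficients $c_{-,I_x}$, and using duality; the support condition $w\ge u,v$ is then deduced from Lemma~\ref{lem:Zcoeff} together with $\wt\prod(I_w|_E)\le w$. The paper additionally pauses to verify that the product on $Q_W^*$ induced by $\Delta$ agrees with the pointwise product $f_uf_v=\de_{u,v}f_u$ introduced in \S\ref{sec:prelim}, but otherwise the two arguments are the same.
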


\begin{ex}Consider the $A_3$-case. Consider $I_u=(2,3,1,2,1), I_v=(1,2,3,2,1)$, then $\bbz^{I_w}_{I_u, I_v}=0$ unless $w=w_0$  is the longest element. Fix $I_{w_0}=(1,2,3,1,2,1)$, in which case we have 
\begin{align*}
&\bfz^{I_{w_0}}_{\{2,3,4,5,6\}, \{1,2,3,5,6\}}&&=B^Z_1B_2^ZB_3^ZB_4^ZB_5^ZB^Z_6\cdot 1 \\
&&&=\frac{a_{\al_1}a_{\al_2}}{b_{\al_1}b_{\al_2}b_{\al_1+\al_2}b_{\al_2+\al_3}b_{\al_1+\al_2+\al_3}},
\end{align*}
and $c_{\{2,3,4,5,6\}, I_u}=c_{\{1,2, 3, 5,6\}, I_v}=1$.
Therefore,
\[Z^*_{I_u}\cdot Z^*_{I_v}=\frac{a_{\al_1}a_{\al_2}}{b_{\al_1}b_{\al_2}b_{\al_1+\al_2}b_{\al_2+\al_3}b_{\al_1+\al_2+\al_3}} Z^*_{I_{w_0}}. \]
\end{ex}

\begin{proof}[Proof of Theorem~\ref{thm:prod}]
The coproduct structure $\Delta$ on $Q_W$ (Equation~\eqref{eq:coprod}) naturally induces a product on $Q_W^*$. 
For all $f, g \in Q_W^*$ and $\sum_{w\in W} q_w \de_w \in Q_W$, 
\begin{align*}
\langle f\ g, \sum\limits_{w\in W} q_w\de_w \rangle  & = \langle f \otimes g, \Delta(\sum_{w\in W} q_w\de_w ) \rangle  \\
&=  \langle f \otimes g,\sum_{w\in W} q_w\de_w \otimes \de_w \rangle\\
& = \sum_{w\in W} q_w   \langle f, \de_w  \rangle   \langle g, \de_w  \rangle.
\end{align*}
Note that this product corresponds to the product on $Q_W^*$ introduced at the beginning of Section~\ref{se:dualopalg} since
\begin{align*}
\langle f_u\ f_v, \sum\limits_{w\in W}  q_w\de_w\rangle &=
\begin{cases}
\langle f_u, \sum q_w\de_w\rangle =  \sum_w q_w\langle f_u, \de_w  \rangle , &\mbox{if $u=v$};\\
0, & \mbox{otherwise},
\end{cases} \\
&= \begin{cases}
q_u, & \mbox{if $u=v$};\\ 
0 ,& \mbox{otherwise}.
\end{cases}
\end{align*}

From Theorem \ref{thm:coprod} we have
\begin{align*}
\tr (Z_{I_w})&=\sum_{E,F\subset [\ell(w)]}\bfz^{I_w}_{E,F}Z_{E}\otimes Z_{F}\\
&=\sum_{E,F\subset [\ell(w)]}\bfz^{I_w}_{E,F}\left[\left(\sum_{u\in W}c_{E, I_u}Z_{I_u}\right)\otimes\left(\sum_{v\in W}c_{F, I_v}Z_{I_v}\right)\right]\\
&=\sum_{u,v\in W}\left[\sum_{E,F\subset [\ell(w)]} \bfz^{I_w}_{E,F} c_{E, I_u} c_{F, I_v}\right] Z_{I_u}\otimes Z_{I_v}\\
&=\sum_{u,v\in W}\bbz^{I_w}_{I_u, I_v}Z_{I_u}\otimes Z_{I_v}.
\end{align*}

Finally we obtain the coefficient by calculating the pairing:
\begin{align*}
\langle Z_{I_u}^*\ Z_{I_v}^*, Z_{I_w}\rangle & =  \langle Z_{I_u}^*\otimes Z_{I_v}^*, \Delta(Z_{I_w})\rangle = \bbz^{I_w}_{I_u, I_v}.
\end{align*}
Let $I_w\vert_E$ be the subsequence obtained from restricting $I_w$ to $E$. 
Since $w=\wt\prod I_w\ge \wt\prod ( I_w\vert_E)$ for any $E\subset [\ell(w)]$,  
by Lemma~\ref{lem:Zcoeff},  $\bbz^{I_w}_{I_u, I_v}=0$ unless $u\le w$ and  $v\le w.$ 
\end{proof}

The coproduct structure on the left $Q$-module $Q_W$ restricts to a coproduct structure on the left $S$-module $\bfD_F$ \cite[Theorem 9.2]{CZZ1}.
 Consequently, the embedding $\bfD_F^*\subset Q_W^*$ is an embedding of subrings. So the structure constants of the $S$-bases $\{X_{I_w}^*\}$ and $\{Y_{I_w}^*\}$ in $\bfD_F^*$ are precisely those of the $Q$-bases $\{X_{I_w}^*\}$ and $\{Y_{I_w}^*\}$ in $Q_W^*$.

Specializing Theorem \ref{thm:prod} to the $X$-operators, we have
\[
X_{I_u}^*\ X_{I_v}^*=\sum_{w\ge v, w\ge u}\bbx^{I_w}_{I_u, I_v}X^*_{I_w}, 
\]
with 
\begin{align}\label{eq:generalbbx}
\bbx^{I_w}_{I_u, I_v}=\sum_{E, F\subset [\ell(w)]}\bfx^{I_w}_{E,F} c_{I_w \vert_{E}, I_u} c_{I_w \vert_{F}, I_v},
\end{align}
where $c_{I,I_v}$ are the coefficients that occur in the expansion $X_I = \sum_v c_{I,I_v} X_{I_v}$. 
 It follows from \cite[Theorem 9.2 and Proposition 7.7]{CZZ1} that $\bfx^{I_w}_{E,F}\in S$, that $c_{I,I_w}\in S$, so $\bbx^{I_w}_{I_u,I_v} \in S$. 
 Similarly, specializing to the $Y$-operators, the structure constants for $Y_{I_w}^*$ are denoted by $\bby^{I_w}_{I_v,I_u}$ and can be expressed as
$$
\bby^{I_w}_{I_u, I_v}=\sum_{E, F\subset [\ell(w)]}\bfy^{I_w}_{E,F} c_{I_w \vert_{E}, I_u} c_{I_w \vert_{F}, I_v},
$$
where now the coefficients $c_{I, I_v}$ are those appearing in the expansion of $Y_I$.  
 As before, $\bfy^{I_w}_{E, F}\in S$ and $c_{I,I_w}\in S$, so $\bby^{I_w}_{I_u,I_v}\in S$. 
In \textsection\ref{sec:cohK} we show that these coefficients simplify in the case that $F=F_a$ or $F=F_m$, resulting in Theorem 1 from \cite{GK19}. It is worth noting that the formula \eqref{eq:generalbbx} can be used to prove the  Leray-Hirsch Theorem for flag varieties (see \cite{DZ20}).

\begin{ex}Assume the root datum is of type $A_1$, then $W=\{e, s_1\}$. 
We calculate the basis change explicitly:
$$
X_e^*=f_e+f_{s_1}, \quad X_{(1)}^*=-x_{\al_1}f_{s_1}, \\
$$
and then we may obtain the products directly:
\begin{align*}
&X_e^*\ X_e^*=X_e^*, && X_e^*\ X_{(1)}^*=X_{(1)}^*,  && X_{(1)}^*\ X_{(1)}^*=-x_{\al_1}X_{(1)}^*, 
\end{align*}
and note that it agrees with Theorem  \ref{thm:prod} with $Z=X$.
\end{ex}

\begin{ex}
Consider the root datum $A_2$, with $W=\{e, s_1, s_2, s_1s_2, s_2s_1, w_0\}$. For the longest element $w_0$, we fix the reduced sequence $I_{w_0}=s_1s_2s_1$. 

We use the calculation in Example~\ref{ex:dualbasischange}, and the product structure on $Q_W^*$ to obtain the multiplication table for $\{X_{I_v}\}$. Recall that $f_uf_v = 1$ if $u=v$ and $0$ otherwise, and that $X_e^* = f_e+f_{s_1} + f_{s_2} +f_{s_1s_2} + f_{s_2s_1} + f_{w_0}$. If $X_w = \sum_u a_u f_u$, we have
$$X_w^*X_e^* = \left(\sum_u a_u f_u\right)\left(\sum_v f_v\right) =\sum_u a_u f_u=X_w^*$$ 
for all $w\in W$. 
Similarly,
\begin{align*}
X_{I_{w_0}}^*\ X^*_{(1,2)} &= (-x_{\al_1}x_{\al_2}x_{\al_{13}}f_{w_0})\left(x_{\al_1}x_{\al_{13}}(f_{s_1s_2}+f_{w_0})\right)\\
&=-x_{\al_1}^2 x_{\al_2}x_{\al_{13}}^2 f_{w_0}\\
&= x_{\al_1}x_{\al_{13}}X_{I_{w_0}}^*.
\end{align*}
The other products are as follows:
\begin{table}[htp]
\begin{center}
\begin{tabular}{l l l c l l l}
$X_{I_{w_0}}^*\ X^*_{I_{w_0}}$ & = & $-x_1x_2x_{\al_{13}}X_{I_{w_0}}^*$  
& &
$X_{I_{w_0}}^*\  X^*_{(2,1)}$ & $=$ & $x_2x_{\al_{13}}X_{I_{w_0}}^*$
\\
$X_{(1)}^*\  X_{(2)}^*$ & $=$ & $X_{(1,2)}^*+X_{(2,1)}^*+\kappa_1X_{I_{w_0}}^*$
& &
$X_{I_{w_0}}^*\  X_{(2)}^*$ & $=$ & $-x_{\al_{13}}X_{I_{w_0}}^*$
\\
$X_{(2)}^*\  X_{(2)}^*$&$=$& $\frac{x_{\al_{13}}-x_2}{x_1}X^*_{(1,2)}-x_2X_{(2)}^*$,
& & 
$X_{(2,1)}^*\  X_{(2,1)}^*$ & $=$ & $x_2x_{\al_{13}}X_{(2,1)}^*$
\\
$X_{(2,1)}^*\  X_{(2)}^*$ & $=$ &$\frac{x_{\al_{13}}-x_2}{x_1}X_{I_{w_0}}^*-x_2X_{(2,1)}^*$, 
& &
$X_{I_{w_0}}^*\  X^*_{(1)}$ & $=$ & $-yX_{I_{w_0}}^* $ 
\\
$X_{(1)}^*\  X_{(1)}^*$ & $=$ & $-x_1X_{(1)}^*+\frac{x_{\al_{13}}-x_1}{x_2}X_{(2,1)}^*$
 & & 
$X_{(1,2)}^*\  X_{(1,2)}^*$ & $=$ & $x_1x_{\al_{13}}X_{(1,2)}^*$
\\
& &$+ \frac{x_1y+x_{\al_{13}}^2-x_1x_{\al_{13}}-y^2}{x_1x_2x_{\al_{13}}}X_{I_{w_0}}^*.$
\end{tabular}
\end{center}
\end{table}%
Here $y$ was defined in Example~\ref{ex:dualbasischange}. 

One can check that the above coefficients $\bbx_{I_u, I_v}^{I_w}$ agree with the formula \eqref{eq:generalbbx}. 
Note that when computing $\bbx^{I_{w_0}}_{1,1}$, one needs to compute the following coefficients:
\[
\bfx^{I_{w_0}}_{\{3\}, \{3\}}, ~\bfx^{I_{w_0}}_{\{1, 3\}, \{3\}},~\bfx^{I_{w_0}}_{\{3\}, \{1, 3\}},~ \bfx^{I_{w_0}}_{\{1, 3\},\{1,3 \}}.
\]
\end{ex}

As an application, we consider the case of a partial flag variety. Let $K$ be a subset of $[n]$. Let $P_K$ be the standard parabolic subgroup, $W_K<W$ the corresponding subgroup, and $W^K\subset W$ be  the set of minimal length representatives of $W/W_K$.  We say a set of reduced sequences $I_w$ is $K$-compatible if for each $w=uv, u\in W^K, v\in W_K$, we have $I_w=I_u\cup I_v$, i.e., $I_w$ is the concatenation of $I_u$  with $I_v$.

\begin{thm} \label{thm:K} Suppose the set $\{I_w\}$ is $K$-compatible. Then  for any $v,u\in W^K$, we have
\[
X_{I_u}^*\ X_{I_v}^*=\sum_{w\in W^K, w\ge v, w\ge u} \bbx^{I_w}_{I_u, I_v}X^*_{I_w}. 
\]
\end{thm}
\begin{proof} It follows from \cite[Corollary 8.4]{CZZ2} that $X^*_{I_u}, u\in W^K$ is a basis of $(Q_W^*)^{W_K}$. Moreover, from Lemma 4.3 of loc.it., we know $\de_w\bullet (ff')=(\de_w\bullet f)(\de_w \bullet f')$. Therefore, $X_{I_u}^*\ X_{I_v}^*\in (Q_W^*)^{W_K}$, so is a linear combination of $X_{I_w}^*, w\in W^K$. 
\end{proof}

Geometrically, under the assumption of this theorem, it follows from \cite[Corollary 8.4]{CZZ2} that  $\{X_{I_w}^*\}_{w\in W^K}$ is a basis of $(\bfD_F^*)^{W_K}\cong \hh_T(G/P_K)$.  So the product  $X_{I_u}^*\ X_{I_v}^*, u,v\in W^K$ is a linear combination of $X_{I_w}^*, w\in W^K$. 

\begin{cor}Let $F=F_a$ or $F_m$, and suppose $u\in W$ satisfies that $u\in W^K$ for some $K$ and $u$ is the longest element in $W^K$. Then for any $v\in W^K$, $\bbx^{w}_{u,v}=0$ for any $w\in W$, unless $w=u$. 
\end{cor}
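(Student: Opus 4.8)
The plan is to obtain this as an immediate consequence of Theorem~\ref{thm:J}. The first step is to dispose of the reduced-word bookkeeping: when $F = F_a$ or $F = F_m$ the operators $X_\al$ satisfy the braid relations (the coefficients $\ka_{\al\be}$ and $c_{I_v}$ of Lemma~\ref{lem:1} all vanish), so $X_{I_w}$, its dual $X_{I_w}^*$, and the structure constants $\bbx^{I_w}_{I_u,I_v}$ do not depend on the choice of reduced word; I write $X_w$, $X_w^*$, $\bbx^w_{u,v}$ accordingly. In particular I may freely assume the fixed family $\{I_w\}_{w\in W}$ is $J$-compatible (for $w = xy$ with $x\in W^J$, $y\in W_J$ and $\ell(w)=\ell(x)+\ell(y)$, take $I_w = I_x\cup I_y$ for fixed reduced words $I_x$, $I_y$), so that Theorem~\ref{thm:J} applies.

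Next I would invoke Theorem~\ref{thm:J} for the given $u$ and an arbitrary $v\in W^J$:
\[
X_u^*\,X_v^* = \sum_{\substack{w\in W^J\\ w\ge u,\ w\ge v}} \bbx^w_{u,v}\,X_w^* .
\]
Since $\{X_w^*\}_{w\in W}$ is a basis of $\bfD_F^*$, reading off coefficients of this expansion already shows $\bbx^w_{u,v}=0$ whenever $w\notin W^J$ or $w\not\ge u$. The key point is then purely combinatorial: by hypothesis $u$ is the longest element of $W^J$, and the length-maximal element of a parabolic quotient $W^J$ is its maximum in the Bruhat order, so $w\in W^J$ together with $w\ge u$ forces $w = u$. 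Hence the only surviving term on the right-hand side is $w=u$, which is exactly the assertion.

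The single non-formal ingredient is the statement that the length-maximal element $w^J$ of $W^J$ is the Bruhat-maximum of $W^J$; I would cite a standard reference (e.g.\ Bj\"orner--Brenti), or supply the short argument: $w^J = w_0 w_{0,J}$ with $w_{0,J}$ the longest element of $W_J$, and for $x\in W^J$ one has $x w_{0,J}\le w_0$ in $W$, where $x$ and $w^J$ are the minimal-length representatives of the cosets $xW_J$ and $w_0 W_J$ respectively, so $x\le w^J$ because the map $W\to W^J$ sending an element to the minimal representative of its coset is order-preserving. I do not anticipate any real obstacle: the content of the corollary is carried entirely by Theorem~\ref{thm:J}, and the only care needed is (i) justifying reduced-word independence so that the $J$-compatibility hypothesis is harmless, and (ii) getting the Bruhat-order description of $w^J$ right. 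One could bypass Theorem~\ref{thm:J} by checking directly that $X_u^*X_v^*\in(\bfD_F^*)^{W_J}$ — using $\de_w\bullet(ff')=(\de_w\bullet f)(\de_w\bullet f')$ and that $\{X_w^*\}_{w\in W^J}$ spans this invariant subring — but that merely reproves the theorem, so invoking it is cleaner.
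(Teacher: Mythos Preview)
Your proposal is correct and follows essentially the same approach as the paper's own proof: use braid-relation independence to assume a $J$-compatible choice of reduced words, apply Theorem~\ref{thm:J}, and then use that the longest element of $W^J$ is Bruhat-maximal. Your write-up is in fact more careful than the paper's, which simply asserts ``since $u$ is maximal in $W^J$'' without further comment; your supplied justification (via $w^J = w_0 w_{0,J}$ and order-preservation of the coset-representative map) is a welcome elaboration rather than a deviation.
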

\begin{proof}In these cases, the braid relations are satisfied, so the structure constants do not depend on the choice of reduced sequences. In other words, fixing $u$ and $K$, we can assume we have chosen  $K$-compatible reduced sequences. Then Theorem \ref{thm:K} applies, which implies that for any $v\in W^K, w\in W$, we have $\bbx^{I_w}_{u,v}=0$ unless $w\in W^K$ and $w\ge u$. Since $u$ is maximal in $W^K$, so $w=u$. 
\end{proof}

\section{Structure constants in singular cohomology and K-theory}\label{sec:cohK}

We restrict our attention to $H_T^*(G/B)$ and $K_T(G/B)$ to recover formulas in \cite{GK19} of structure constants of Schubert classes   for singular cohomology ($F=F_a$) and K-theory ($F=F_m$). We first simplify the coefficients $c^X_{I,I_w}$ and $c^Y_{I,I_w}$ in these two cases. 
Recall that, when the formal group law is $F=F_a$ or $F=F_m$, the braid relations are satisfied for $Z_\al=X_\al$ and $Z_\al=Y_\al$. We consider the equivariant oriented cohomology together with either the additive or multiplicative formal group law, and restrict the coefficient ring to $S^a$ or $S^m$. 

\begin{lem}\label{lem:c}
Let $J$ be a word in the Weyl group. As in Lemma~\ref{lem:Zcoeff}, define coefficients $c_{J,I_w}$  by 
$$
Z_J= \sum_{w\in W}c_{J,I_w}Z_{I_w}.
$$
\begin{enumerate}
\item Let  $F=F_a$. If $Z_\al=X_\al$ or $Z_\al=Y_\al$, then
$$
c_{J,I_w} = \begin{cases}
1 ,&\mbox{if $J$ is  a reduced word for $w$};\\
0, &\mbox{else}.
\end{cases}
$$
\item Let  $F=F_m$. If $Z_\al=X_\al$ or $Z_\al=Y_\al$, then
$$
c_{J,I_w} = \begin{cases}
1 ,&\mbox{if $w=\wt \prod J$};\\
0, &\mbox{else}.
\end{cases}
$$
\end{enumerate}
\end{lem}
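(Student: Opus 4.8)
The plan is to exploit that, for $F=F_a$ or $F_m$, the subalgebra of $Q_W$ generated by the $Z_\al$ (with $Z=X$ or $Z=Y$) is especially rigid: the braid relations hold, \emph{and} $Z_\al^2=\kappa_\al Z_\al$ with $\kappa_\al\in\{0,1\}$, so this subalgebra is a nilCoxeter algebra when $F=F_a$ (where $\kappa_\al=0$) and a $0$-Hecke (Demazure) algebra when $F=F_m$ (where $\kappa_\al=1$). In both situations $Z_J$ turns out to equal a single basis element $Z_{I_w}$ up to the scalar $0$ or $1$, and the asserted formula for $c_{J,I_w}$ then follows by comparing coefficients against the $Q$-basis $\{Z_{I_w}\}_{w\in W}$ of $Q_W$.

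First I would assemble the two inputs. By Lemma~\ref{lem:1}\eqref{item:1} we have $Z_\al^2=\kappa_\al Z_\al$ for both $Z_\al=X_\al$ and $Z_\al=Y_\al$; by Example~\ref{ex:FaFm}, $\kappa_\al=0$ in the additive case and $\kappa_\al=1$ in the multiplicative case, so $Z_\al^2=0$ (additive) or $Z_\al^2=Z_\al$ (multiplicative). As recalled just above the statement of the lemma, the braid relations hold for $Z_\al=X_\al$ and $Z_\al=Y_\al$ in these two cases; combined with the classical fact (Tits, Matsumoto) that any two reduced words for the same $w\in W$ are linked by a sequence of braid moves, this yields \emph{braid independence}: $Z_{I_w}$ does not depend on the chosen reduced word, and $Z_{j_1}\cdots Z_{j_m}=Z_{I_w}$ for every reduced word $(j_1,\dots,j_m)$ of $w$.

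The core of the argument is an induction on $k=|J|$ proving: in the additive case $Z_J=Z_{I_w}$ if $J$ is a reduced word for $w$ and $Z_J=0$ otherwise, while in the multiplicative case $Z_J=Z_{I_{\wt\prod J}}$ always. The cases $k\le 1$ are immediate. For $J=(i_1,\dots,i_k)$ with $k\ge 2$, set $i=i_1$ and $J'=(i_2,\dots,i_k)$. If (additive case) the inductive hypothesis already gives $Z_{J'}=0$, then $Z_J=Z_{(i)}Z_{J'}=0$ and $J$ is itself non-reduced, as needed. Otherwise $Z_{J'}=Z_{I_v}$ with $v=\wt\prod J'$, and I split on the sign of $\ell(s_iv)-\ell(v)$. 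If $s_iv>v$, then $(i)\cup I_v$ is a reduced word for $s_iv=\wt\prod J$, so braid independence gives $Z_J=Z_{(i)}Z_{I_v}=Z_{I_{s_iv}}$, and in the additive case $J$ is then a reduced word for $s_iv$. If $s_iv<v$, the exchange condition produces a reduced word $(i)\cup I''$ for $v$, whence braid independence gives $Z_{I_v}=Z_{(i)}Z_{I''}$ and therefore $Z_J=Z_{(i)}^2Z_{I''}=\kappa_iZ_{(i)}Z_{I''}=\kappa_iZ_{I_v}$; since $\wt\prod J=v$ here, this equals $0$ (additive, with $J$ non-reduced) or $Z_{I_v}=Z_{I_{\wt\prod J}}$ (multiplicative). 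Reading off coefficients from $Z_J=\sum_w c_{J,I_w}Z_{I_w}$ against the basis $\{Z_{I_w}\}$ then gives exactly the stated values of $c_{J,I_w}$.

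The computation is routine bookkeeping; the only delicate point is the collapse case $s_iv<v$, where one must invoke the exchange condition to bring the repeated reflection to the front of a reduced word for $v$ and then use braid independence to re-express $Z_{I_v}$, so that the relation $Z_\al^2=\kappa_\al Z_\al$ can be applied. I do not expect any genuine obstacle beyond carefully coordinating braid independence with the Coxeter-group length dichotomy.
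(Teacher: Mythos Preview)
Your proposal is correct and follows essentially the same approach as the paper's proof: both use that the braid relations hold for $X_\al$ and $Y_\al$ when $F=F_a$ or $F_m$, together with the quadratic relation $Z_\al^2=\kappa_\al Z_\al$ (with $\kappa_\al=0$ or $1$), to conclude that $Z_J$ collapses to $0$ or to a single $Z_{I_w}$. The paper's proof is terser---it simply asserts that $Z_\al^2=0$ forces $Z_J=0$ for non-reduced $J$ in the additive case, and that $Z_\al^2=Z_\al$ forces $Z_J=Z_{\wt\prod J}$ in the multiplicative case---whereas you spell out the induction on $|J|$ and invoke the exchange condition explicitly in the collapse case $s_iv<v$, which is exactly the detail hidden behind the paper's one-line argument.
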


\begin{proof} When $F=F_a$ or $F=F_m$, it is well-known that the braid relations are satisfied. We write  $c_{J,w}$ for the coefficient $c_{J,I_w}$.
 When $F=F_a$, $Z_\al^2=0$, so if $J$ is not reduced,  $Z_J=0$. If $J$ is reduced and $\prod J=w$,  then $Z_J=Z_{w}$, so $c_{J,w}=1$ and $c_{J,v}=0$ for $v\neq w$. 

When $F=F_m$, we have $Z_\al^2=Z_\al$ and thus $Z_J=Z_w$ where $w:={\wt \prod J}$. It follows that $c_{J,w}=1$ and $c_{J,v}=0$ for $v\neq w$. 
\end{proof}

\begin{ex}\label{ex:FaSch}
For $H^*(G/B)$ and $F=F_a$, as described in Example~\ref{ex:Schubert} and Proposition 2.7, the element $\zeta_w^X$ in $\bfD_F^*$ corresponds under a natural isomorphism
$$
\bfD_{F_a}^*\longrightarrow \hh_T(G/B)
$$
to the equivariant cohomology class Poincar\'e dual to $[X(w)]$, where $[X(w)]$ is the homology class of the Schubert variety. Furthermore, the first Chern classes of the corresponding line bundles are $x_{\al} = \al$ for all simple roots $\al$.

For each $w\in W$, fix a reduced sequence $I_w$. From the specialization of Theorem \ref{thm:prod}, we have defining relations
$$
Y_{u}^*\ Y_v^*=\sum_{w\ge u, w\ge v}\bby^{I_w}_{v,u}Y_{w}^*
$$
for  $\bby^{I_w}_{v,u}$ . Then
\begin{align*}
\bby^{I_w}_{u, v}&=\sum_{E,F\subset [\ell(w)]}\bfy^{I_w}_{E, F}c^Y_{E, I_u}c^Y_{F, I_v}\text{ by Theorem~\ref{thm:prod},}\\
&=\sum_{E,F\tiny\text{ reduced}\atop \mbox{for }u,v} \bfy^{I_w}_{E, F}, \text{ by Lemma~\ref{lem:c}}(1)
\end{align*}
where the second sum is over $E,F$ whose corresponding products of reflections are reduced and equal to $u, v$ respectively. 
 Recall that
\[
\bfy^{I_w}_{E, F}=(B^Y_{{1}}B^Y_{{2}}\cdots B^Y_{{\ell(w)}})\cdot 1,
\]
with 
\[
B_j^Y=\left\{\begin{array}{ll}
x_{\be_j}\de_{\be_j}, &\text{ if }j\in E\cap F,\\
\de_{\be_j}, &\text{ if }j\in  E \text{ or }F, \text{ but not both},\\
Y_{\be_j}, &\text{ if }j\not\in E\cup F.\\
 \end{array}\right.
\]
with $\be_j = \al_{i_j}$.

The coefficients $\bby^{I_w}_{u,v}$ coincide with the structure constants $c^w_{uv}$ in \cite[Theorem 1]{GK19}. Note that in this case, $Y_\al=-X_\al$, so $\zeta^Y_w = (-1)^{\ell(w)} \zeta^X_w$, and thus $X_w^*=(-1)^{\ell(w)} Y_w^*$.  Therefore, 
\[
\bbx^{I_w}_{u,v}=(-1)^{\ell(w)+\ell(u)+\ell(v)}\bby^{I_w}_{u,v}. 
\]

\end{ex}
\begin{ex}\label{ex:FmSch} For $K_T(G/B)$ (and $F=F_m$), we have $x_\alpha = 1-e^{-\alpha}$. 
The action of $X_\alpha$ (resp. $Y_{-\alpha}$) on $K_T(pt)$ corresponds to the action of the ordinary (resp.  isobaric) Demazure operator in  \cite{GK19}.

Fixing a reduced sequence $I_w$ for each $w$, we have \[
X_u^*\ X_v^*=\sum_{w\ge u, w\ge v}\bbx^{I_w}_{u,v}X_w^*=\sum_{w\ge v, w\ge u}\sum_{E,F}\bfx^{I_w}_{E,F}X_w^*,
\]
where by Lemma~\ref{lem:c}(2), the second sum is over all $E,F\subset [\ell(w)]$ such that $\wt \prod E = u$ and $\wt \prod F=v$.
Here, we have
\[
B_j^X=\left\{\begin{array}{ll}
-(1-e^{-\be_j})\de_{\be_j}, &\text{ if }j\in E\cap F,\\
\de_{\be_j}, &\text{ if }j\in  E \text{ or }F, \text{ but not both},\\
X_{\be_j}, &\text{ if }j\not\in E\cup F,\\
 \end{array}\right.
\] where $\be_j = \al_{i_j}$.

The classes $\{\xi_w:\ w\in W\}$ in \cite{GK19} are defined as the dual basis to $[\mathcal{O}_{X(w)}(-\partial X(w)) ]$ under the pairing obtained by taking the equivariant cap product and pushing forward to a point. Each $\xi_w$ coincides with the Poincar\'e dual class to $[\mathcal{O}_{Y(w)}]$.  In Example~\ref{ex:Schubert} we note that $X_w^* =(-1)^{\ell(w)} [\mathcal{O}_{Y(w)}]$, and thus $\xi_w=(-1)^{\ell(w)} X_w^*$. Therefore,
\[
\xi_u\ \xi_v=(-1)^{\ell(u)+\ell(v)+\ell(w)}\sum_{w\ge u, v}\bbx^{I_w}_{u,v}\xi_w.
\]
It follows that the coefficients $(-1)^{\ell(u)+\ell(v)+\ell(w)}\bbx^{I_w}_{u,v}$ coincide with $a^w_{uv}$ in  \cite{GK19}, as is clear from the formula.

With the observation that the classes 
$\{{\mathring \xi}_w:\ w\in W\}$ defined in \cite{GK19} satisfy ${\mathring\xi}_w = Y_w^*$,  a similar argument 
implies that $\bby^w_{u,v}$ coincide with the structure constants ${\mathring a}^w_{u,v}$  defined in \cite{GK19}.

\end{ex}

\begin{ex}Let $F=F_a$. Consider the  $A_2$ case. If $I_w=s_1s_2s_1, u=s_1, v=s_1s_2$, then  
\[
\bby^{s_1s_2s_1}_{s_1,s_1s_2}=\bfy^{s_1s_2s_1}_{\{1\}, \{1,2\}}+\bfy^{s_1s_2s_1}_{\{3\}, \{1,2\}}=\begin{pmatrix}
\al_1 \de_1&\de_2& Y_1\\
+\de_1& \de_2&\de_1
\end{pmatrix}\cdot 1=0+1=1. 
\]
Similarly, 
\[\bby^{s_1s_2s_1}_{s_1,s_2s_1}=\bfy^{s_1s_2s_1}_{\{1\}, \{2,3\}}+\bfy^{s_1s_2s_1}_{\{3\}, \{2,3\}}=
\begin{pmatrix}
\de_1&\de_2&\de_1\\
+Y_1&\de_2&\al_1\de_1
\end{pmatrix}\cdot 1=1-1=0.\]

For the $A_3$ case, one can also compute 
\begin{align*}
\bby^{s_1s_2s_3s_1s_2}_{s_2s_3s_2, s_1s_2s_1}&=\bfy^{s_1s_2s_3s_1s_2}_{\{2,3,5\}, \{1,2,4\}}+\bfy^{s_1s_2s_3s_1s_2}_{\{2,3,5\}, \{2,4,5\}}\\
&=\begin{pmatrix}
\de_1&\al_2\de_2&\de_3&\de_1&\de_2\\
+Y_1&\al_2\de_2 &\de_3&\de_1 & \al_2\de_2
\end{pmatrix}\cdot 1\\
&=(\al_1+\al_2)+\al_3. 
\end{align*}
\end{ex}
\begin{ex}Let $F=F_m$. Consider the $A_3$ case, with	 $I_w=s_1s_2s_3s_1s_2, u=s_2s_3s_2, v=s_1s_2s_1$. We have
\begin{align*}
\bbx^{s_1s_2s_3s_1s_2}_{s_2s_3s_2,s_1s_2s_1}&=\bfx^{s_1s_2s_3s_1s_2}_{\{2,3,5\}, \{1,2,4\}}+\bfx^{s_1s_2s_3s_1s_2}_{\{2,3,5\}, \{2,4, 5\}}+\bfx^{s_1s_2s_3s_1s_2}_{\{2,3,5\}, \{1,2,4, 5\}}\\
&=\begin{pmatrix}
\de_1& -x_2\de_2& \de_3& \de_1& \de_2\\
+X_1& -x_2 \de_2&  \de_3 & \de_1&-x_2\de_2\\
+\de_1 &-x_2\de_2&  \de_3&  \de_1 &-x_2\de_2
\end{pmatrix}\cdot 1\\
&=-x_{1+2}+\frac{x_{1+2+3}x_2-x_{2+3}x_{1+2}}{x_1}+x_{2+3}x_{1+2}\\
&=x_{\al_2}-x_{\al_1+2\al_2+\al_3}. 
\end{align*}
\end{ex}

\section{Structure constants of cohomological stable bases}
\label{sec:cohstab}

In this section, we let $F=F_a$ and $R=R^a=\bbZ[h]$.
 We recall the definition of  the cohomological stable basis of Maulik-Okounkov, and generalize Su's formula of structure constants for Segre-Schwartz-MacPherson classes (Theorem~\ref{thm:cohstab}). We use the twisted group algebra language for singular cohomology, whose $K$-theory version was given  in \cite{ SZZ17}. As the framework and proofs are very similar to earlier sections,  we will only review essential properties.  Some of the notation  introduced below is restricted to this section only.

Let $R^a=\bbZ[h]$, $S^a=\Sym_{R^a}(\La)$ and $Q^a=\Frac(S^a)$. Define
$$
Q^a_W=Q^a\rtimes_{R^a} R^a[W]
$$ with $Q^a$-basis $\de_w, w\in W$.  For simplicity we introduce the following notation:
\[
\widehat \al=h-\al, \quad \al_{w_0}=\prod_{\al>0}\al, \quad \widehat\al_{w_0}=\prod_{\al>0}(h-\al). \]

Finally, for any simple root $\al$, define 
 an operator associated to this root by
\[
T_\al=-h\frac{1}{\al}(1-\de_{\al})-\de_{\al}=-\frac{h}{\al}+\frac{\widehat\al}{\al}\de_\al\in Q_W^a.
\]
By direct computation, the set $\{T_\al\}_{\al\in \{\al_1,\dots, \al_n\}}$ satisfies the braid relations,
and $T_\al^2=1$. Indeed, the algebra generated by $\{T_\al \}$ is called the degenerate (or graded) Hecke algebra. Note that $T_\al$ is a special case of $Z_\al$, occurring over $R=R^a$.

 For any sequence $I = (i_1,\dots, i_\ell)$ (not necessarily reduced), we define the {\it Demazure-Lusztig operator} 
$$
T_I = T_{\al_{i_1}}\dots T_{\al_{i_\ell}}
$$ in cohomology to be the product of the operators indicated in the list $I$.  
It follows from the relations that, if $I$ and $I'$ are two sequences with $w:=\prod I = \prod I'$, then $T_I = T_{I'}$, and we denote it $T_w$. The set $\{T_w \vert\ w\in W\}$ is a basis of $Q_W^a$.

Let $(Q_W^a)^*$ be the $Q^a$-dual of $Q_W^a$, and let $\{T_w^*\}\subseteq (Q_W^a)^*$
be the dual basis. Denote the basis of $(Q_W^a)^*$ dual to $\{\de_w\in Q_W^a\}$ by $\{f_w\}$, as in  \textsection\ref{sec:prelim}. The identity of the ring $(Q_W^a)^*$ is denoted by $\unit=\sum_{w\in W}f_w$.
The ring $Q_W^a$ acts on $(Q_W^a)^*$ via the $\bullet$-action, given as before by
$$ 
\langle z\bullet q^*, z'\rangle = \langle q^*, z'z\rangle\quad \mbox{for $z, z'\in Q_W^a, q^*\in (Q_W^a)^*$.}
$$

It induces a $W$-action on $(Q_W^a)^*$ via the embedding $W\subset Q_W^a$. Let $((Q_W^a)^*)^W$ denote the Weyl-invariant subgroup of  $(Q_W^a)^*$.

In this section only, denote by $\hY\in Q_W^a$ the element
\[
\sum_{w\in W}\de_w\frac{1}{\al_{w_0}\widehat\al_{w_0}}
=\sum_{w\in W}\de_w\frac{1}{\prod_{\al>0}\al(h-\al)}.
\]
The map $\hY\bullet \_: (Q_W^a)^*\to ((Q_W^a)^*)^{W}=Q^a\unit$ 
is the algebraic analogue of the composition of the map 
\[Q^a\otimes_{S^a}H^*_{T\times \bbC^*}(T^*G/B)\cong Q^a\otimes_{S^a} H^*_{T\times \bbC^*}(G/B)\to Q^a\otimes_{S^a}H^*_{T\times \bbC^*}(\pt),\]
where the last map is the equivariant pushforward of cohomology class on $G/B$ to a point on the second term. The proofs in  \cite[Lemma 7.1]{CZZ2} and \cite[Lemma 5.1]{SZZ17} easily extend to show that, for any $f, g \in (Q_W^a)^*$,
\[
\hY\bullet ((T_\al\bullet f)\cdot g)=\hY\bullet (f\cdot (T_\al\bullet g)).
\]

\begin{defn} We define two bases of $(Q_W^a)^*$ as a module over $Q^a$.  Let
 \begin{align*}
 \stab^+_w &=T_{w^{-1}}\bullet ( \al_{w_0}f_e), \mbox{and}\\
 \stab^-_w &=(-1)^{\ell(w_0)}T_{w^{-1}w_0}\bullet (\al_{w_0}f_{w_0}).
 \end{align*}
Then $\{\stab^+_w:\ w\in W\}$  and $\{\stab^-_w:\ w\in W\}$ each form a basis for $(Q_W^a)^*$ as a module over $Q^a$.  We call these bases the {\bf  cohomological stable bases.} See \cite{S15} for more details.
\end{defn}
It is immediate from the definition that  $\stab^+_w$ has support on $\{f_v:\ v\leq w\}$ and $\stab^-_w$ has support on $\{f_v: v\geq w\}$.

The following lemma is the analogue of Theorem 5.7 and Lemma 5.6 in \cite{SZZ17}. The first identity was due to Maulik-Okounkov originally.
\begin{lem}\label{lem:Kpairing} We have 
\[
\hY\bullet \left[\stab^+_v\cdot \stab^-_u\right]=(-1)^{\ell(w_0)}\de_{v,u}\unit, \quad \hY \bullet \left[\stab^+_v\cdot \widehat \al_{w_0}T_u^* \right]=\de_{v,u}\unit.
\]
\end{lem}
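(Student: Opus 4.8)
The plan is to establish both identities of Lemma~\ref{lem:Kpairing} by reducing them to a single ``adjointness + normalization'' computation, mirroring the strategy of \cite[Theorem 5.7, Lemma 5.6]{SZZ17}. First I would record the adjointness property stated just above the lemma: for all $f,g\in(Q_W^a)^*$ and all simple $\al$,
\[
\hY\bullet\big((T_\al\bullet f)\cdot g\big)=\hY\bullet\big(f\cdot(T_\al\bullet g)\big),
\]
and note that iterating this over a reduced word gives $\hY\bullet\big((T_w\bullet f)\cdot g\big)=\hY\bullet\big(f\cdot(T_{w^{-1}}\bullet g)\big)$, since $T_w^{-1}=T_{w^{-1}}$ follows from $T_\al^2=1$ and the braid relations. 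Using $\stab^+_v=T_{v^{-1}}\bullet(\al_{w_0}f_e)$ and $\stab^-_u=(-1)^{\ell(w_0)}T_{u^{-1}w_0}\bullet(\al_{w_0}f_{w_0})$, this lets me move all the Demazure--Lusztig operators onto one side:
\[
\hY\bullet[\stab^+_v\cdot\stab^-_u]=(-1)^{\ell(w_0)}\hY\bullet\big[(\al_{w_0}f_e)\cdot\big(T_{v w_0^{-1}u^{-1}\cdots}\bullet(\al_{w_0}f_{w_0})\big)\big],
\]
or more cleanly, collapsing the two operators into $T_{v}^{-1}T_{u^{-1}w_0}$ acting appropriately; I would then observe that because $f_e\cdot(-)$ kills all $f_w$ with $w\neq e$, only the $f_e$-coefficient of the second factor survives, and a support/triangularity argument (using that $\stab^+_v$ is supported on $\{f_w:w\le v\}$ and $\stab^-_u$ on $\{f_w:w\ge u\}$, so the product can only be nonzero when $v\ge u$, and symmetrically $u\ge v$) forces $v=u$.

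For the diagonal case $v=u$, I would compute the single surviving coefficient explicitly. The cleanest route is to evaluate $\hY\bullet[\stab^+_v\cdot\stab^-_v]$ by pairing against $\de_e$: since $\hY=\sum_w\de_w\frac{1}{\al_{w_0}\widehat\al_{w_0}}$ and the target is $Q^a\unit$, it suffices to compute $\langle\hY\bullet(\stab^+_v\cdot\stab^-_v),\de_e\rangle$, which unwinds via the $\bullet$-action definition to a product $\langle\stab^+_v\cdot\stab^-_v,\ \hY'\rangle$ for the relevant element, and then to the $f_v$-coefficients of $\stab^+_v$ and $\stab^-_v$. The $f_v$-coefficient of $\stab^+_v$ is the ``leading term'' of $T_{v^{-1}}\bullet(\al_{w_0}f_e)$, which by induction on $\ell(v)$ and the formula $T_\al=-\frac{h}{\al}+\frac{\widehat\al}{\al}\de_\al$ equals $\prod$ of factors $\frac{\widehat{\be}}{\be}$ over the inversion set, times $\al_{w_0}$ suitably transformed; similarly for $\stab^-_v$. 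Multiplying these and dividing by $\al_{w_0}\widehat\al_{w_0}$ should telescope to exactly $(-1)^{\ell(w_0)}$. The second identity then follows from the first once I identify $\widehat\al_{w_0}T_u^*$ with a rescaling of $\stab^-_u$: indeed, the dual basis $\{T_u^*\}$ and $\{\stab^-_u\}$ are both triangular with respect to $\{f_w\}$ with the same support pattern, and comparing leading coefficients shows $\stab^-_u=(-1)^{\ell(w_0)}\widehat\al_{w_0}T_u^*$ up to the explicit scalar, whence the second pairing is the first divided by $(-1)^{\ell(w_0)}$, giving $\de_{v,u}\unit$.

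I expect the main obstacle to be the explicit leading-coefficient computation: tracking how the products $\al_{w_0}=\prod_{\al>0}\al$ and $\widehat\al_{w_0}=\prod_{\al>0}(h-\al)$ transform under the $\de_w$'s appearing in $\stab^\pm_v$ and verifying that after multiplying by $\frac{1}{\al_{w_0}\widehat\al_{w_0}}$ everything cancels to a pure sign. This is where one must be careful about which positive roots get sent to negative roots (contributing sign changes $\al\mapsto-\al$ and $\widehat\al\mapsto$ something) and ensure the bookkeeping matches the $(-1)^{\ell(w_0)}$ in the statement and the normalization $(-1)^{\ell(w_0)}$ built into the definition of $\stab^-_w$. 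The adjointness reduction and the support/triangularity argument forcing $v=u$ are routine given the results already established; the off-diagonal vanishing could alternatively be deduced purely from the support conditions on $\stab^+_v$ and $\stab^-_u$ without any computation, which I would do first to isolate the one genuine calculation.
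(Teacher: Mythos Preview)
The paper does not supply its own proof; it records the lemma as the degenerate-Hecke analogue of \cite[Theorem~5.7 and Lemma~5.6]{SZZ17} and attributes the first identity to Maulik--Okounkov. Your overall framework (adjointness of $T_\al$ under the pairing, plus one explicit normalization) is exactly the \cite{SZZ17} strategy, so in spirit you are on the intended track.

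Two steps in your outline do not go through as written. First, the support argument only gives half of the off-diagonal vanishing: from the supports of $\stab^+_v$ and $\stab^-_u$ you deduce vanishing when $u\not\le v$, but ``symmetrically $u\ge v$'' does not follow from support alone. The clean fix is to push adjointness further: since $T_\al^2=1$ and the braid relations hold, the $T_w$ multiply as a group, so $T_vT_{u^{-1}w_0}=T_{vu^{-1}w_0}$ and the pairing reduces to $\hY\bullet[\stab^+_e\cdot\stab^-_{uv^{-1}}]$, where $\stab^+_e=\al_{w_0}f_e$ is supported at the singleton $\{e\}$ and the vanishing for $uv^{-1}\neq e$ is immediate.

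Second, deducing the second identity from the first via $\stab^-_u=(-1)^{\ell(w_0)}\widehat\al_{w_0}T_u^*$ is circular relative to the paper: that identification is Theorem~\ref{thm:cohstab}(1), which the paper derives \emph{from} Lemma~\ref{lem:Kpairing}. Moreover, ``same triangular support and same leading coefficient'' does not prove two families of elements are equal. The simpler route is to prove the second identity directly by the same adjointness trick: since $T_v\bullet T_u^*=T_{uv^{-1}}^*$ (again using $T_wT_v=T_{wv}$), and $T_e^*(\de_e)=1$, one reduces to the single computation $\hY\bullet[\al_{w_0}\widehat\al_{w_0}f_e]=\unit$, which avoids the sign bookkeeping you anticipate as the main obstacle.
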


 Define structure constants $\bbt^{w}_{u,v}\in Q^a$ by the equation
 \[
\stab^-_u\cdot \stab^-_v=\sum_{w\in W}\bbt^{w}_{u,v}\stab^-_w.
\]
 We now present the main result about the stable basis
$\{\stab^-_w\}.$

\begin{thm}\label{thm:cohstab}
 The classes $\stab^-_w$ and the coefficients $\bbt^{w}_{u,v}$ satisfy the following properties:
\begin{enumerate}
\item We have $\stab^-_w=(-1)^{\ell(w_0)}\widehat\al_{w_0}T^*_w$. 
\item For each $w\in W$, fix a reduced sequence $I_w$.  Then
\[
\bbt^{w}_{u,v}={\sum_{\tiny\begin{array}{c}E,F\subset[\ell(w)]\\
\prod (I_w \vert_{E})=u, \prod (I_w \vert_{F})=v\end{array} }}\widehat\al_{w_0}^2 \bft^{I_w}_{E,F}, 
\]
 where $\bft_{E, F}^{I_w}=(B_1^T B_2^T\cdots B^T_k)\cdot 1$ with  
\[
B_j^T=\left\{\begin{array}{ll}
\frac{\al_{i_j}}{\widehat\al_{i_j}}\de_{i_j}, &\text{ if }j\in E\cap F,\\
\frac{h}{\widehat\al_{i_j}}\de_{i_j}, &\text{ if }j\in  E \text{ or }F, \text{ but not both},\\
-\frac{h}{\al_{i_j}}+\frac{h^2}{\al_{i_j}\widehat\al_{i_j}}\de_{i_j}, &\text{ if }j\not\in E\cup F.\\
 \end{array}\right.
\]
\end{enumerate}
\end{thm}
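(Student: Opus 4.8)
The plan is to prove part (1) first and then to deduce part (2) by observing that the Demazure--Lusztig element $T_\al$ is an instance of the generalized Demazure operator $Z_\al$ of Section~3, so that Theorems~\ref{thm:coprod} and~\ref{thm:prod} apply with $(S,Q,\bfD_F)$ replaced by $(S^a,Q^a,\text{degenerate Hecke algebra})$ and then specialized.

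\emph{Part (1).} Consider the form $(f,g)\mapsto \hY\bullet(f\cdot g)$ on $(Q^a_W)^*$; it takes values in $((Q^a_W)^*)^W = Q^a\,\unit\cong Q^a$, and it is $Q^a$-linear in each slot because the product on $(Q^a_W)^*$ is $Q^a$-bilinear and $\hY\bullet(q\cdot h)=q\,(\hY\bullet h)$ for $q\in Q^a$. The two identities of Lemma~\ref{lem:Kpairing} say that, under this form, the $Q^a$-basis $\{\stab^+_v\}$ is dual, up to the unit scalar $(-1)^{\ell(w_0)}$, both to $\{\stab^-_u\}$ and to $\{\widehat\al_{w_0}T^*_u\}$; in particular the form is perfect, so writing any $X\in(Q^a_W)^*$ as $\sum_u x_u\stab^-_u$ one has $\hY\bullet(\stab^+_v\cdot X)=(-1)^{\ell(w_0)}x_v\,\unit$, and hence $\hY\bullet(\stab^+_v\cdot X)=0$ for all $v$ forces $X=0$. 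Applying this to $X=\stab^-_w-(-1)^{\ell(w_0)}\widehat\al_{w_0}T^*_w$ yields $\stab^-_w=(-1)^{\ell(w_0)}\widehat\al_{w_0}T^*_w$.

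\emph{Part (2).} Write $T_\al=a_\al+b_\al\de_\al$ with $a_\al=-h/\al$ and $b_\al=\widehat\al/\al$. Since $h$ is $W$-invariant, $w(a_\al)=-h/w(\al)=a_{w(\al)}$ and $w(b_\al)=(h-w(\al))/w(\al)=b_{w(\al)}$, and $b_\al$ is invertible in $Q^a$, so $T_\al$ satisfies the hypotheses on $Z_\al$ in Section~3. Theorem~\ref{thm:coprod} then gives $\tr(T_{I_w})=\sum_{E,F\subset[\ell(w)]}\bft^{I_w}_{E,F}\,T_E\otimes T_F$, and substituting these $a_\al,b_\al$ into~\eqref{eq:BZ} reproduces exactly the operators $B^T_j$ tabulated in the statement (in particular the case $j\notin E\cup F$ uses $a_\al+(a_\al^2/b_\al)\de_\al=-h/\al+(h^2/(\al\widehat\al))\de_\al$), so the coefficients $\bft^{I_w}_{E,F}$ here coincide with those in part (2). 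Because the braid relations hold for the $T_\al$ and $T_\al^2=1$, one has $T_E=T_{\prod(I_w|_{E})}$ with the ordinary product in $W$, so the change-of-basis constants of Lemma~\ref{lem:Zcoeff} are simply $c_{J,I_w}=\de_{w,\,\prod J}$. Feeding this into the dual-pairing computation of Theorem~\ref{thm:prod}, $\langle T^*_u T^*_v,T_{I_w}\rangle=\langle T^*_u\otimes T^*_v,\tr(T_{I_w})\rangle$, collapses the double sum to those $E,F$ with $\prod(I_w|_{E})=u$ and $\prod(I_w|_{F})=v$, giving the structure constants of $\{T^*_w\}$ in $(Q^a_W)^*$. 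Finally, substituting the identity of part (1) into $\stab^-_u\cdot\stab^-_v=\sum_w\bbt^w_{u,v}\,\stab^-_w$ converts these into the $\bbt^w_{u,v}$, yielding the asserted closed form after collecting the powers of $\widehat\al_{w_0}$.

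The essential new ingredients are thus part (1) and the verification that $T_\al$ meets the requirements of Section~3; once these are in hand, every step of Sections~3 and~4 transfers mechanically, and since we work throughout in the full algebra $Q^a_W$ and its $Q^a$-dual, the coalgebra subtleties attached to the inclusion $\bfD_F\subset Q_W$ never intervene. I expect the main obstacle to be bookkeeping rather than conceptual: carefully tracking the scalar $\widehat\al_{w_0}$ and the sign $(-1)^{\ell(w_0)}$ that arise when passing between $T^*_w$ and $\stab^-_w$, and verifying the specialization of~\eqref{eq:BZ} against the tabulated $B^T_j$ term by term.
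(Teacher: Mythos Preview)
Your proposal is correct and follows essentially the same route as the paper: part~(1) is deduced from the two duality identities in Lemma~\ref{lem:Kpairing} (the paper simply cites that lemma, while you spell out the uniqueness-of-dual-basis argument), and part~(2) is obtained by recognizing $T_\al$ as a generalized Demazure operator $Z_\al$ with $a_\al=-h/\al$, $b_\al=\widehat\al/\al$, invoking Theorem~\ref{thm:prod}, using $T_\al^2=1$ and the braid relations to collapse $c_{J,I_w}$ to $\de_{w,\prod J}$, and then rescaling via part~(1). Your caution about the final bookkeeping of the $\widehat\al_{w_0}$-power is well placed; carry it through carefully and compare with the examples following the theorem.
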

\begin{proof}(1). This follows from Lemma \ref{lem:Kpairing} above.

(2). For each $w\in W$, we fix a reduced decomposition. We have 
\[\stab^-_u\cdot \stab^-_v=(-1)^{\ell(w_0)}\widehat\al_{w_0}T_u^*\cdot (-1)^{\ell(w_0)}\widehat \al_{w_0}T_v^*
=\widehat\al_{w_0}^2T_u^*\cdot T_v^*.\]
Therefore, it suffices to consider the structure constants for $T_u^*$. But  the elements $T_u$ are an instantiation of $Z_{I_u}$ with the coefficient ring $R^a$, with $a_{\al_{i_j}} = -h/\al_{i_j}$ and $b_{\al_{i_j}}=\widehat\al_{i_j}/\al_{i_j}$. Thus Theorem \ref{thm:prod} indicates how to multiply the corresponding dual elements, resulting in $B_j^T$ defined as above.
\end{proof}
When $h=-1$,  the Demazure Lusztig operator $T_\alpha$ specializes to the operator considered by Su in \cite{S19}, allowing us to recover his formula for the structure constants from the SSM classes from Theorem~\ref{thm:cohstab}.
\begin{ex}Consider the $A_2$-case. If $I_w=s_1s_2s_1, u=v=s_1$, then 
\begin{align*}
\bbt^{s_1s_2s_1}_{s_1, s_1}&=\widehat\al_1\widehat\al_2\widehat\al_{13}(\bft^{121}_{\{1\}, \{3\}}+\bft^{121}_{\{3\}, \{1\}}+\bft^{121}_{\{1\}, \{1\}}+\bft^{121}_{\{3\}, \{3\}})\\
&=\widehat\al_1\widehat\al_2\widehat\al_{13}\begin{pmatrix}
\frac{h}{\widehat\al_{1}}\de_1&-\frac{h}{\al_2}+\frac{h^2}{\al_2\widehat\al_2}\de_2&\frac{h}{\widehat\al_1}\de_1\\
\frac{h}{\widehat\al_{1}}\de_1&-\frac{h}{\al_2}+\frac{h^2}{\al_2\widehat\al_2}\de_2&\frac{h}{\widehat\al_1}\de_1\\
\frac{\al_1}{\widehat\al_{1}}\de_1&-\frac{h}{\al_2}+\frac{h^2}{\al_2\widehat\al_2}\de_2&-\frac{h}{\al_1}+\frac{h^2}{\al_1\widehat\al_1}\de_1\\
-\frac{h}{\al_1}+\frac{h^2}{\al_1\widehat\al_1}\de_1&-\frac{h}{\al_2}+\frac{h^2}{\al_2\widehat\al_2}\de_2&\frac{\al_1}{\widehat\al_{1}}\de_1\\
\end{pmatrix}\cdot 1\\
&=h^2(h+\al_1). 
\end{align*}

If $I_w=s_1s_2s_1, u=s_1, v=s_1s_2$, then
\begin{align*}
\bbt^{s_1s_2s_1}_{s_1,s_1s_2}&=\widehat\al_1\widehat\al_2\widehat\al_{13}(\bft^{121}_{\{1\}, \{1,2\}}+\bft^{121}_{\{3\}, \{1,2\}})\\
&=\widehat\al_1\widehat\al_2\widehat\al_{13}\begin{pmatrix}
\frac{\al_1}{\widehat\al_{1}}\de_1&\frac{h}{\widehat\al_2}\de_2& -\frac{h}{\al_1}+\frac{h^2}{\al_1\widehat\al_1}\de_1\\
\frac{h}{\widehat\al_1}\de_1& \frac{h}{\widehat\al_2}\de_2&\frac{h}{\widehat\al_1}\de_1
\end{pmatrix}\cdot 1=h^2(h+\al_1).
\end{align*}

Similarly, for $v'=s_2s_1$, we have 
\begin{align*}\bbt^{s_1s_2s_1}_{s_1,s_2s_1}&=\widehat\al_{w_0}^2(\bft^{s_1s_2s_1}_{\{1\}, \{2,3\}}+\bft^{s_1s_2s_1}_{\{3\}, \{2,3\}})\\
&=\widehat\al_1\widehat\al_2\widehat\al_{13}\begin{pmatrix}
\frac{h}{\widehat\al_{1}}\de_1&\frac{h}{\widehat\al_2}\de_2& \frac{h}{\widehat\al_1}\de_1\\
-\frac{h}{\al_1}+\frac{h^2}{\al_1\widehat\al_1}\de_1& \frac{h}{\widehat\al_2}\de_2&\frac{\al_1}{\widehat\al_1}\de_1
\end{pmatrix}\cdot 1\\
&=h^3-h^2\widehat\al_{13}=h^2(\al_1+\al_2). 
\end{align*}
\end{ex}
\begin{ex}Consider the $A_3$ case. For $I_w=s_1s_2s_3s_1s_2, u=s_2s_3s_2, v=s_1s_2s_1$, with $\alpha_{ij}=\al_i+\cdots +\al_{j-1}$ for $1\le i<j\le 4$, we have
\begin{align*}
\bbt^{s_1s_2s_3s_1s_2}_{s_2s_3s_2, s_1s_2s_1}&=\widehat\al_{w_0}(\bft^{s_1s_2s_3s_1s_2}_{\{2,3,5\}, \{1,2,4\}}+\bft^{s_1s_2s_3s_1s_2}_{\{2,3,5\}, \{2,4,5\}})\\
&=\widehat\al_{w_0}\begin{pmatrix}
\frac{h}{\widehat\al_1}\de_1& \frac{\al_2}{\widehat\al_2} \de_2& \frac{h}{\widehat\al_3} \de_3 & \frac{h}{\widehat\al_1}\de_1&\frac{h}{\widehat\al_2}\de_2\\
-\frac{h}{\al_1}+\frac{h^2}{\al_1\widehat\al_1}\de_1& \frac{\al_2}{\widehat\al_2}\de_2&\frac{h}{\widehat\al_3} \de_3&\frac{h}{\widehat\al_1} \de_1& \frac{\al_2}{\widehat\al_2}\de_2
\end{pmatrix}\cdot 1\\
&=h^4\widehat\al_3(\al_1+\al_2)+h^3\widehat\al_3 (h\al_3+\al_1\al_2+\al_2^2+\al_2\al_3)\\
&=h^3\widehat\al_3(h+\al_2)(\al_1+\al_2+\al_3). 
\end{align*}
\begin{align*}
\bbt^{12312}_{232, 1}&=\widehat\al_{w_0}(\bft^{12312}_{\{2,3,5\}, \{1\}}+\bft^{12312}_{\{2,3,5\}, \{4\}})\\
&=h^5(\widehat\al_2+2\widehat\al_3).\\
\bbt^{12312}_{232, 2}&=\widehat\al_{w_2}(\bft^{12312}_{\{2,3,5\},\{2\}}+\bft^{12312}_{\{2,3,5\}, \{5\}})\\
&=h^4(3h^2+h\al_2+(\al_{24})\widehat\al_{14}). 
\end{align*}
\end{ex}
\begin{rem}\label{rem:Su}

In \cite[Theorem 1.1]{S19}, the authors find a formula for the structure constants of $\sigma^*_w\in (Q_W^a)^*$, where 
\[
\sigma_i=\frac{1+\al_i}{\al_i}\de_i-\frac{1}{\al_i}\in Q_W^a. 
\]
This is equal to our $-T_\al$ with $\hbar=-1$.
\end{rem}

\section{Structure constants for K-theoretic stable bases}
In this section, we give a formula of  the structure constants of the K-theory stable basis. Similar to our strategy in \textsection\ref{sec:cohstab}, we use the twisted group algebra method. This method was introduced by Su, Zhao and the second author in \cite{SZZ17}; we only recall the definitions below. 
Here we use $F=F_m$ and ${R=R^m = \bbZ[q^{1/2},q^{-1/2}]}$.

Let 
 $S^m=R^m[\La]$. We use the following notation in this section:
\[
x_{\pm\al}=1-e^{\mp\al}, \quad \hx_\al=1-qe^{-\al}, \quad \hx_w=\prod_{\al>0, w^{-1}\al<0}\hx_\al, \quad q_w=q^{\ell(w)}. 
\]
Let $Q^m=\Frac(S^m)$ and apply the twisted group algebra construction to obtain the module
 $$
 Q^m_W=Q^m\rtimes_{R^m} R^m[W].
 $$
  Define the operator $\tau_\al^-$ by 
\[
\tau_\al^-=\frac{q-1}{1-e^\al}+\frac{1-qe^{-\al}}{1-e^\al}\de_\al\in Q_W^m. 
\]
Observe that $\tau^-_\al$ is a special case of $Z_\al$ when $Q=Q^m$.

 A simple calculation shows that
$(\tau_\al^-)^2=(q-1)\tau^-_\al+q$, and that $\{\tau_\al\}$ satisfies the braid relations. 
It follows that the K-theoretic Demazure-Lusztig operator $\tau_w^-$, given by the product
$$
\tau_w^- = \tau_{\al_{i_1}}^- \tau_{\al_{i_2}}^-\cdots  \tau_{\al_{i_\ell}}^-,
$$
is independent of choice of reduced word $s_{i_1}s_{i_2}\cdots s_{i_\ell}$ for $w$. 
The set ${\{ \tau_w^-, w\in W\}}$ is a $Q^m$-basis of $Q_W^m$.

For each not-necessarily reduced sequence $I= (i_1,\dots, i_\ell)$,  let $\tau_I^-$ be the concatenation $\tau_{\al_{i_1}}^- \cdots \tau_{\al_{i_\ell}}^-$, and  define the structure constants $c^{\tau^-}_{I,w}\in R^m$ by the equations
\begin{equation}\label{tau}
\tau^-_I=\sum_{w\in W}c^{\tau^-}_{I,w}\tau_w^-.
\end{equation}

\begin{lem}\label{lem:Ktau} The coefficients $c^{\tau^-}_{I,w}\in R^m$ in \eqref{tau} satisfy the following:
\begin{enumerate}
\item For all $w\in W$ and sequences $I$, $c^{\tau^-}_{I,w}=0$ unless $w\le \wt\prod I$.
\item 
If $I$ is reduced, then
$$
c^{\tau^-}_{I,w}=\begin{cases}
0 &\mbox{if $w\neq \prod I$}\\
1 & \mbox{if $w= \prod I$}.
\end{cases}
$$

\end{enumerate}
\end{lem}
\begin{proof}Statement (1)  follows from the quadratic relation  $(\tau^-_\al)^2=(q-1)\tau_\al^-+q$. 

Statement (2) follows from the braid relations satisfied by the $\tau_\al^-$.
\end{proof} The analogous statement to Theorem~\ref{thm:Leibniz} is the following proposition.
\begin{prop}[K-Stable Leibniz Rule] If $I=(i_1,...,i_k)$, we have 
\[
\tau^-_I\cdot (pq)=\sum_{E,F\subset [k]}P^I_{E,F}(\tau_{I \vert_{E}}\cdot p) (\tau_{I \vert_{F}}\cdot q), \quad p,q\in Q.
\]
where $P^I_{E,F}=(B^{\tau^-}_1 B^{\tau^-}_2\cdots  B^{\tau^-}_k)\cdot 1$ with $B^{\tau^-}_j\in Q_W^m$ defined by
\[
B_j^{\tau^-}=\begin{cases}
\frac{1-e^{\al_{i_j}}}{1-qe^{-\al_{i_j}}}\de_{i_j}, &\text{ if }j\in E\cap F,\\
\frac{1-q}{1-qe^{-\al_{i_j}}}\de_{i_j}, &\text{ if }j\in  E \text{ or }F, \text{ but not both},\\
\frac{q-1}{1-qe^{-\al_{i_j}}}\tau^-_{\al_{i_j}}\de_{i_j}, &\text{ if }j\not\in E\cup F.\\
\end{cases}
\]

\end{prop}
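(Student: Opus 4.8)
The plan is to observe that $\tau^-_\al$ is a special instance of the generalized Demazure element $Z_\al = a_\al + b_\al\de_\al$ of Section~3, now taken over the coefficient ring $R^m$ with fraction field $Q^m$, and then to quote Theorem~\ref{thm:Leibniz} directly. Concretely, I would set, for every root $\al\in\Sigma$,
\[
a_\al = \frac{q-1}{1-e^\al}, \qquad b_\al = \frac{1-qe^{-\al}}{1-e^\al}\in Q^m,
\]
so that $\tau^-_\al = a_\al + b_\al\de_\al$ for each simple root $\al$. Two hypotheses from the setup of Section~3 must then be verified. The $W$-equivariance $w(a_\al) = a_{w(\al)}$ and $w(b_\al) = b_{w(\al)}$ for all $w\in W$ is immediate, since $W$ fixes $q^{\pm 1/2}\in R^m$ and acts on $S^m = R^m[\La]$ by $w(e^\la) = e^{w\la}$. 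The invertibility of $b_\al$ in $Q^m$ holds because $1-e^\al$ and $1-qe^{-\al}$ are both nonzero elements of the domain $S^m = \ZZ[q^{\pm 1/2}][\La]$ (being differences of distinct monomials), hence units in $Q^m = \Frac(S^m)$.

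With these checks in place, Theorem~\ref{thm:Leibniz} applies verbatim to $Z_\al = \tau^-_\al$ and yields, for $I=(i_1,\dots,i_k)$ and $p,q\in Q^m$,
\[
\tau^-_I\cdot(pq) = \sum_{E,F\subset[k]}\bfz^I_{E,F}\,(\tau^-_{I|_E}\cdot p)(\tau^-_{I|_F}\cdot q),
\]
with $\bfz^I_{E,F} = (B_1^Z\cdots B_k^Z)\cdot 1$ and $B_j^Z$ given by \eqref{eq:BZ}. Setting $P^I_{E,F} := \bfz^I_{E,F}$ and $B_j^{\tau^-} := B_j^Z$, it remains only to rewrite $B_j^Z$ using the explicit values of $a_{\ga_j}, b_{\ga_j}$, where $\ga_j := \al_{i_j}$. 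For $j\in E\cap F$ one gets $\frac{1}{b_{\ga_j}}\de_{\ga_j} = \frac{1-e^{\al_{i_j}}}{1-qe^{-\al_{i_j}}}\de_{\ga_j}$; for $j$ in exactly one of $E,F$ one gets $-\frac{a_{\ga_j}}{b_{\ga_j}}\de_{\ga_j} = \frac{1-q}{1-qe^{-\al_{i_j}}}\de_{\ga_j}$; and for $j\notin E\cup F$, identity \eqref{eq:case3} applied to $\tau^-_{\ga_j}$ gives $a_{\ga_j}+\frac{a_{\ga_j}^2}{b_{\ga_j}}\de_{\ga_j} = \frac{a_{\ga_j}}{b_{\ga_j}}\tau^-_{\ga_j}\de_{\ga_j} = \frac{q-1}{1-qe^{-\al_{i_j}}}\tau^-_{\ga_j}\de_{\ga_j}$. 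This reproduces exactly the stated formula for $B_j^{\tau^-}$.

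There is essentially no obstacle here: the proposition is a direct specialization of the Generalized Leibniz Rule. The only points that deserve a moment's care are the invertibility of $b_\al$ (which rests on $1-qe^{-\al}$ being a nonzero element of $S^m$) and the short algebraic manipulation — really just an application of \eqref{eq:case3} — turning $a_{\ga_j}+\frac{a_{\ga_j}^2}{b_{\ga_j}}\de_{\ga_j}$ into $\frac{q-1}{1-qe^{-\al_{i_j}}}\tau^-_{\ga_j}\de_{\ga_j}$, using $\de_{\ga_j}^2 = 1$.
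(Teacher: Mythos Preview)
Your proposal is correct and follows exactly the paper's approach: the paper simply writes ``Following from Theorem~\ref{thm:Leibniz}, we have'' before stating the proposition, so your argument is precisely the intended specialization of the Generalized Leibniz Rule with $a_\al = \frac{q-1}{1-e^\al}$ and $b_\al = \frac{1-qe^{-\al}}{1-e^\al}$, only spelled out in more detail than the paper bothers to give.
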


Similar to \textsection\ref{sec:cohstab}, we take the dual $(Q_W^m)^*$, and $Q_W^m$ acts on $(Q_W^m)^*$ via the $\bullet$-action. Indeed, we have 
\[
(Q_W^m)^*\cong Q^m\otimes_{S^m}K_{\bbC^*\times T}(G/B)\cong Q^m\otimes_{S^m}K_{\bbC^*\times T}(T^*G/B). 
\]
\begin{defn}\cite[Definition 5.3, Theorem 5.4]{SZZ17}  The K-theoretic stable basis elements are defined by
\[
\stab^-_w=q_{w_0}q_w^{-1/2}(\tau^-_{w_0w})^{-1}\bullet (\prod_{\al>0}(1-e^{\al})f_{w_0})\in (Q_W^m)^*. 
\]
\end{defn}

Moreover, by \cite[Theorem 5.4, Theorem 6.5]{SZZ17}, we have 
\[
\stab^-_w=q_w^{1/2}\hx_{w_0}(\tau^-_w)^*. 
\]

The following theorem gives a formula for the structure constants of the K-theory stable basis:
\begin{thm} Let $\{\stab_w^- \vert \ w\in W\}$ denote the K-theory stable basis of $(Q_W^m)^*.$ Define coefficients $p^w_{u,v}\in Q^m$ by the equation
\[
\stab_u^-\cdot \stab_v^-=\sum_{w\ge u, w\ge v}p^w_{u,v}\stab_w^-.
\]
Then  
\[
p^w_{u,v}=q^{\frac{1}{2}(\ell(u)+\ell(v)-\ell(w))}\hx_{w_0}\sum P^{I_w}_{E,F}c^{\tau^-}_{I_w \vert_{E},u}c^{\tau^-}_{I_w \vert_{F}, v}, 
\]
where the sum is over all $E, F\subset[\ell(w)]$ such that $\widetilde\prod (I_{w} \vert_{E})\ge u$ and $\widetilde\prod (I_w \vert_{F})\ge v$, and coefficients $c^\tau_{I_w \vert_{F}, v}$ are given in Lemma \ref{lem:Ktau}
\end{thm}
\begin{proof}The proof follows a similar argument as that of Theorem \ref{thm:cohstab}. 
\end{proof}
\begin{rem}Due to the quadratic relation $(\tau_\al^-)^2=(q-1)\tau^-_\al+q$, it is difficult to express the sum in terms of formulas in Section~\ref{sec:cohK} and Section~\ref{sec:cohstab}. Indeed, this is also the reason why it is difficult to express the restriction formula of $\stab_w^-$ in \cite{SZZ17} in terms of an AJS-Billey-Graham-Willems type formula. 
\end{rem}

\section{The restriction formula}\label{se:restriction}

In this section we relate the structure constants of $Z_{I_w}^*$ with its restriction coefficients. This generalizes such relations in cohomology and K-theory due to Kostant and Kumar in  \cite[Proposition 4.32]{KK86} and \cite[Lemma 2.25]{KK90}. 

Let $Z_\al$ be given in Definition~\ref{def:LeibnizCoeff}. Following Lemma~\ref{coefficientsforinverting}, we obtain coefficients $b^Z_{u, I_w}\in Q$ using the defining relations
\[
\de_u=\sum_{w\in W}b^Z_{u,I_w}Z_{I_w},
\]
Then $Z_{I_v}^*=\sum_{u}b^Z_{u, I_v}f_u$, i.e.,  $Z_{I_v}^*(\de_u)=b^Z_{u, I_v}$. We call $b^Z_{u,I_v}$ the restriction coefficients of $Z_{I_v}^*$.
\begin{thm} \label{thm:res}For any $w\in W$, define the matrix $\fp_w^Z$ with $\fp^Z_{w}(u,v)=\bbz^{I_v}_{I_w, I_u}$, the matrix $\fb^Z$ with $\fb^Z(u,v)=b^Z_{v,I_u}$, and the matrix $\fb^Z_w$ with $\fb^Z_w(u,v)=\de_{u,v}b^Z_{u, I_w}$. Then 
\[
\fp_w^Z=\fb^Z \cdot \fb^Z_w \cdot(\fb^Z)^{-1}. 
\]
\end{thm}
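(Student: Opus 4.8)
The plan is to interpret all three matrices as matrices of the same linear-algebraic object — multiplication in the ring $Q_W^*$ — expressed in different bases, and then read off the conjugation relation from a change-of-basis computation. Fix $w\in W$. The key observation is that the matrix $\fp_w^Z$ records the structure constants of left-multiplication by $Z_{I_w}^*$: by Theorem~\ref{thm:prod}, $Z_{I_w}^*\, Z_{I_u}^* = \sum_v \bbz^{I_v}_{I_w,I_u} Z_{I_v}^*$, so $\fp^Z_w$ is the matrix of the operator ``multiply by $Z_{I_w}^*$'' in the basis $\{Z_{I_u}^*\}$ of $Q_W^*$. On the other hand, in the basis $\{f_u\}$, multiplication by $Z_{I_w}^*$ is diagonal: since $f_u f_v = \delta_{u,v} f_u$ and $Z_{I_w}^* = \sum_u b^Z_{u,I_w} f_u$, we get $Z_{I_w}^* \cdot f_u = b^Z_{u,I_w} f_u$, so the operator ``multiply by $Z_{I_w}^*$'' is represented in the $\{f_u\}$-basis by exactly the diagonal matrix $\fb^Z_w$.

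First I would make precise the change-of-basis dictionary between $\{Z_{I_u}^*\}$ and $\{f_u\}$. From $Z_{I_v}^* = \sum_u b^Z_{u,I_v} f_u$ (the formula recorded just before the theorem, coming from $\delta_u = \sum_w b^Z_{u,I_w} Z_{I_w}$ and duality), the matrix $\fb^Z$ with $\fb^Z(u,v) = b^Z_{v,I_u}$ is the change-of-basis matrix; one must be slightly careful about whether rows or columns are indexed by the ``old'' versus ``new'' basis, and whether one needs $\fb^Z$ or its transpose, but this is exactly the sort of bookkeeping that gets pinned down by writing out $Z_{I_u}^* = \sum_v \fb^Z(u,v) f_v$ and tracking indices. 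The matrix $\fb^Z$ is invertible over $Q$ because $\{Z_{I_v}^*\}$ and $\{f_v\}$ are both $Q$-bases of $Q_W^*$ (Lemma on $\{Z_{I_v}\}$ being a $Q$-basis, dualized), which is what makes $(\fb^Z)^{-1}$ meaningful.

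Next I would assemble the conjugation. Let $M_w\colon Q_W^* \to Q_W^*$ be the $Q$-linear map $\xi \mapsto Z_{I_w}^* \cdot \xi$. In the $\{f_u\}$-basis its matrix is $\fb^Z_w$; in the $\{Z_{I_u}^*\}$-basis its matrix is $\fp^Z_w$ (by Theorem~\ref{thm:prod}, after checking the symmetry $\bbz^{I_v}_{I_w,I_u}$ versus $\bbz^{I_v}_{I_u,I_w}$ — the structure constants are symmetric in the lower two indices since $Q_W^*$ is commutative, so the ordering is harmless). The standard change-of-basis formula for the matrix of a linear operator then gives $\fp^Z_w = P^{-1} \fb^Z_w P$ or $\fp^Z_w = P \fb^Z_w P^{-1}$ where $P$ is the change-of-basis matrix; matching this with the claimed $\fp_w^Z = \fb^Z \cdot \fb^Z_w \cdot (\fb^Z)^{-1}$ identifies $P = \fb^Z$ (up to the transpose convention), which is consistent with the definition $\fb^Z(u,v) = b^Z_{v,I_u}$.

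The only real obstacle is the indexing and transpose conventions: one must verify that the specific definitions $\fp^Z_w(u,v) = \bbz^{I_v}_{I_w,I_u}$, $\fb^Z(u,v) = b^Z_{v,I_u}$, $\fb^Z_w(u,v) = \delta_{u,v} b^Z_{u,I_w}$ multiply in the stated order (and not the reversed one) to give the identity. I would handle this by writing $Z_{I_u}^* = \sum_v b^Z_{v,I_u} f_v$, applying $M_w$ to both sides, using $M_w f_v = b^Z_{v,I_w} f_v$ on the right side and $M_w Z_{I_u}^* = \sum_t \bbz^{I_t}_{I_w,I_u} Z_{I_t}^*$ on the left, re-expanding each $Z_{I_t}^*$ in the $\{f_v\}$-basis, and comparing coefficients of $f_v$; this yields the matrix identity $\sum_t \fp^Z_w(t,u)\, \fb^Z(t,v)^{\!\top\text{-indexed}} = b^Z_{v,I_w}\, b^Z_{v,I_u}$, which rearranges to the claim after recognizing the right-hand side as $(\fb^Z_w \cdot (\fb^Z)^{\text{appropriately indexed}})$. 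Once the conventions are fixed this is a one-line linear algebra identity, so I expect no substantive difficulty beyond the bookkeeping.
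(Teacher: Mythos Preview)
Your proposal is correct and is essentially the paper's proof: the paper verifies the equivalent identity $\fp^Z_w\cdot\fb^Z=\fb^Z\cdot\fb^Z_w$ by the direct entrywise computation you outline at the end, using $Z_{I_z}^*(\de_v)=b^Z_{v,I_z}$ and the fact that evaluation at $\de_v$ is multiplicative (equivalently, your observation that $M_w$ is diagonal in the $\{f_u\}$-basis). Your conceptual framing as a change-of-basis for the multiplication operator $M_w$ is a clean way to organize exactly this computation, and your caution about index conventions is well placed but resolves just as you indicate.
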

\begin{proof}
We have 
\begin{eqnarray*}
(\fp^Z_w\cdot \fb^Z)(u,v)&=\sum_{z\in W}\fp^Z_w(u,z)\fb^Z(z,v)
&=\sum_{z\in W}\bbz^{I_z}_{I_w,I_u}b^Z_{v,I_z}\\
&=\sum_{z\in W}\bbz^{I_z}_{I_w, I_u}Z^*_{I_z}(\de_v)
&=(Z_{I_u}^*\cdot Z_{I_w}^*)(\de_v)\\
&=Z_{I_u}^*(\de_v)\cdot Z_{I_w}^*(\de_v)
&=b^Z_{v, I_u}b^Z_{v, I_w}\\
&=\sum_{z\in W} b^Z_{z,I_u} \de_{z,v}b^Z_{z,I_w}
&=\sum_{z\in W}\fb^Z(u,z)\fb^Z_w(z,v)\\
&=(\fb^Z\cdot \fb^Z_w)(u,v). 
\end{eqnarray*}
\end{proof}
\begin{cor}\label{cor:res}For any $v,w\in W$, we have
\[
\bbz^{I_v}_{I_w,I_v}=b^Z_{v,I_w}.
\]
In particular, $\bbz^{I_v}_{I_w,I_v}$ does not depend on the choice of $I_v$. 
\end{cor}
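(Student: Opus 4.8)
The plan is to read the corollary off as a single diagonal entry of the matrix identity in Theorem~\ref{thm:res}. Fix a linear refinement $W=\{w_1\prec w_2\prec\cdots\}$ of the Bruhat order. The first step is to record that the matrix $\fb^Z$, with $\fb^Z(u,v)=b^Z_{v,I_u}$, is upper triangular with invertible diagonal entries. Indeed, as in the proof of Lemma~\ref{lem:Zcoeff} one has $Z_{I_u}=\sum_{z\le u}a^Z_{I_u,z}\de_z$, whose $\de_u$-coefficient $a^Z_{I_u,u}$ is a product of $W$-translates of the $b_\al$'s along a reduced word of $u$, hence a unit since every $b_\al$ is invertible; inverting this triangular system gives $\de_u=\sum_{z\le u}b^Z_{u,I_z}Z_{I_z}$ with $b^Z_{u,I_u}=(a^Z_{I_u,u})^{-1}$ again a unit. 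Thus $\fb^Z$ is upper triangular with unit diagonal, so $(\fb^Z)^{-1}$ is upper triangular with $(\fb^Z)^{-1}(v,v)=(b^Z_{v,I_v})^{-1}$, while $\fb^Z_w$ is diagonal with $\fb^Z_w(v,v)=b^Z_{v,I_w}$.

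The second step is the entrywise computation. By Theorem~\ref{thm:res}, $\bbz^{I_v}_{I_w,I_v}=\fp^Z_w(v,v)=\bigl(\fb^Z\cdot\fb^Z_w\cdot(\fb^Z)^{-1}\bigr)(v,v)=\sum_{z\in W}\fb^Z(v,z)\,b^Z_{z,I_w}\,(\fb^Z)^{-1}(z,v)$. Since $\fb^Z$ is upper triangular, $\fb^Z(v,z)\ne 0$ forces $v\le z$ and $(\fb^Z)^{-1}(z,v)\ne 0$ forces $z\le v$, so only the term $z=v$ survives, leaving $b^Z_{v,I_v}\cdot b^Z_{v,I_w}\cdot(b^Z_{v,I_v})^{-1}=b^Z_{v,I_w}$, which is the asserted identity. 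The final sentence then follows because the right-hand side $b^Z_{v,I_w}$ is the coefficient of $Z_{I_w}$ in the element $\de_v$, an expression in which no reduced word for $v$ appears (and in the cases $F=F_a$, $F=F_m$, where braid relations hold, this coefficient is genuinely independent of the reduced word chosen for $v$).

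A slicker, self-contained variant I might give instead essentially re-proves Theorem~\ref{thm:res} at the single relevant entry: pairing $Z^*_{I_w}Z^*_{I_v}=\sum_{w'}\bbz^{I_{w'}}_{I_w,I_v}Z^*_{I_{w'}}$ against $\de_v$ and using $\Delta(\de_v)=\de_v\otimes\de_v$ from \eqref{eq:coprod} gives $b^Z_{v,I_w}\,b^Z_{v,I_v}=\langle Z^*_{I_w},\de_v\rangle\langle Z^*_{I_v},\de_v\rangle=\langle Z^*_{I_w}Z^*_{I_v},\de_v\rangle=\sum_{w'}\bbz^{I_{w'}}_{I_w,I_v}\,b^Z_{v,I_{w'}}$; since $\bbz^{I_{w'}}_{I_w,I_v}=0$ unless $w'\ge v$ by Theorem~\ref{thm:prod} while $b^Z_{v,I_{w'}}=0$ unless $w'\le v$, only $w'=v$ remains, and cancelling the unit $b^Z_{v,I_v}$ yields the claim. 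Either way, the only point requiring care — the nearest thing to an obstacle — is the triangularity and unit-diagonal of $\fb^Z$ for a general $Z_\al=a_\al+b_\al\de_\al$, which is exactly where invertibility of the $b_\al$ enters; everything past that is bookkeeping with the Bruhat order.
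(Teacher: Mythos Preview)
Your proof is correct and follows essentially the same route as the paper: read off the $(v,v)$-entry of the matrix identity $\fp^Z_w=\fb^Z\cdot\fb^Z_w\cdot(\fb^Z)^{-1}$ from Theorem~\ref{thm:res}, use Bruhat-triangularity of $\fb^Z$ with unit diagonal to collapse the sum to the single term $z=v$, and cancel. Your ``slicker variant'' is exactly the alternative argument the paper gives in the remark immediately following the corollary (pair against $\de_v$, use $\Delta(\de_v)=\de_v\otimes\de_v$, and squeeze with the two triangularities), so both of your approaches coincide with the paper's.
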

\begin{proof}Denote $Z_{I_w}=\sum_{v\le w}a^Z_{I_w,v}\de_v$. Then the matrix $\fa^Z$ with $\fa^Z(u,v)=a^Z_{I_v,u}$ is the inverse of $\fb^Z$. Theorem \ref{thm:res} implies that 
\begin{align*}
\bbz^{I_v}_{I_w, I_v}&=\fp^Z_w(v,v)\\
&=\sum_{z_1,z_2\in W}\fb^Z(v,z_1)\fb^Z_w(z_1,z_2)\fa^Z(z_2,v)\\
&=\sum_{z_1\ge v,z_2\le v}b^Z_{{z_1}, I_v}\de_{z_1,z_2}b^Z_{z_1,I_w}a^Z_{I_v,{z_2}}\\
&=\sum_{v\le z_1\le v}b^Z_{z_1,I_v}b^Z_{z_1,I_w}a^Z_{I_v, z_1}\\
&=b^Z_{v,v}b^Z_{v,I_w}a^Z_{v,v}=b^Z_{v,I_w}=Z^*_{I_w}(\de_v). 
\end{align*}
\end{proof}

\begin{rem} Corollary \ref{cor:res} can also be proved with the following argument: since $Z_{I_w}^*(\de_u)=b^Z_{u,I_w}=0$ unless $u\ge w$, 
\begin{align*}
(Z_{I_u}^*\cdot Z_{I_v}^*)(\de_u)&=(\sum_{w\ge u, w\ge v}\bbz^{I_w}_{I_u, I_v}Z_{I_w}^*)(\de_u)\\
&=\bbz^{I_u}_{I_u, I_v}Z_{I_u}^*(\de_u)=\bbz^{I_u}_{I_u, I_v}b^Z_{u, I_u}.
\end{align*}
On the other hand, 
\[(Z_{I_u}^*\cdot Z_{I_v}^*)(\de_u)=Z_{I_u}^*(\de_u)Z_{I_v}^*(\de_u)=b^Z_{u, I_u}b^Z_{u, I_v}.\] Therefore, $\bbz^{I_u}_{I_u,I_v}=b^Z_{u, I_v}$. 
\end{rem}

\begin{rem} As mentioned in \cite{GK19}, specializing Corollary \ref{cor:res} and Examples \ref{ex:FaSch} and \ref{ex:FmSch} to singular cohomology or K-theory, and $Z_\al$ to the  $X_\al$ and $Y_\al$-operators, one recovers the AJS/Billey formula and Graham-Willems formula of restriction coefficients of Schubert classes, which are obtained by using root polynomials.
\end{rem}

\begin{ex}Consider the $A_2$-case with $w=s_1, v=s_1s_2s_1$. We compute $b^X_{v,w}=X_{I_w}^*(\de_v)$. For $\bfx^{I_v}_{[3], E}$, we only need to consider the following three:
\begin{align*}
\bfx^{I_v}_{[3], \{1\}}=-x_1, & \quad \bfx^{I_v}_{[3], \{3\}}=-x_{2},\quad \bfx^{I_v}_{[3], \{1,3\}}=x_{1}x_2.
\end{align*}
On the other hand, $c^X_{I_v \vert_E,I_w}=1$ when $E=\{1\}, \{3\}$, and $X_1X_1=\ka_1X_1$. So $c^X_{I_v \vert_{\{1,3\}}, s_1}=\ka_1$. Therefore, 
\begin{align*}
b^X_{w,I_v}=-x_1-x_2+\ka_1x_1x_2.
\end{align*}
In particular, if $F=F_a$, then $b^X_{w,I_v}=-x_1-x_2$, and if $F=F_{m}$, then $b^X_{w,I_v}=-x_1-x_2+x_1x_2=-x_{1+2}$, with $x_{1+2}=x_{\al_{1}+\al_2}$. 
\end{ex}

\begin{ex}Let $w=s_1s_2, v=s_1s_2s_3s_1s_2$. Let us compute $b^X_{v,I_w}=X_{I_w}^*(\de_v)$. We write $X_{ijk\cdots}$ for $X_{i}X_jX_k\cdots$,  $x_{\pm i\pm j}=x_{\pm \al_i\pm \al_j}$ and $\kappa_{\pm i, \pm j}=\kappa_{\pm \al_i, \pm \al_j}$.  To compute $\bfx^{I_v}_{[6], E}$, we only need to consider
\begin{align*}
\bfx^{I_v}_{[6], \{1,2\}}=x_1x_{1+2},& \quad
\bfx^{I_v}_{[6], \{1,5\}}=x_1x_{2+3},\\
\bfx^{I_v}_{[6], \{4,5\}}=x_2x_{2+3},&\quad
\bfx^{I_v}_{[6], \{1,2,5\}}=-x_1x_{1+2}x_{2+3},\\
\bfx^{I_v}_{[6], \{1,4,5\}}=-x_1x_2x_{2+3},&\quad
\bfx^{I_v}_{[6], \{1,2,4,5}\}=x_1x_{1+2}x_2x_{2+3}.
\end{align*}
On the other hand, $c^X_{I_v \vert_{E}, I_w}=1$ when $E=\{1,2\}, \{1,5\}, \{4,5\}$. Concerning $X_{I_w \vert_{\{1,2,5\}}}=X_{122}$, since 
\[
X_1X_2X_2=X_1\ka_2X_2=s_1(\ka_2)X_{12}+\Delta_1(\ka_2)X_{2}=\ka_{1+2}X_{12}+\Delta_1(\ka_2)X_2,
\]
so 
\[c^X_{I_v \vert_{\{1,2,5\}}, I_w}=\kappa_{1+2}.\]
 For $X_{I_w \vert_{\{1,4,5\}}}=X_{112}$, from $X_1X_1X_2=\ka_1X_{1}X_2$, we get \[c^X_{I_v \vert_{\{1,4,5\}}, I_w}=\kappa_1.\]
 Lastly, for $X_{I_w \vert_{\{1,2,4,5\}}}=X_{1122}$, from Lemma \ref{lem:1} we know 
\begin{align*}
X_{1212}&=X_1(X_{121}+\ka_{12}X_1-\ka_{21}X_2)\\
&=\ka_1X_{121}+X_1\ka_{12}X_1-X_1\ka_{21}X_2\\
&=\ka_1X_{121}+s_1(\ka_{12})X_1^2+\Delta_1(\ka_{12})X_1-s_1(\ka_{21})X_{12}-\Delta_1(\ka_{21})X_2\\
&=\ka_1X_{121}+\ka_{-1, 1+2}\ka_1X_1+\Delta_1(\ka_1)X_1-\ka_{1+2,-1}X_{12}-\Delta_1(\ka_{21})X_2, 
\end{align*}
so 
\[c^X_{I_v \vert_{\{1,2,4,5\}}, I_w}=s_1(\ka_{21})=-\ka_{1+2,-1}.\]
Therefore, 
\begin{align*}
&b^X_{s_1s_2,s_1s_2s_3s_1s_2}\\
&=\bfx^{I_v}_{[6], \{1,2\}}+\bfx^{I_v}_{[6], \{1,5\}}+\bfx^{I_v}_{[6], \{4,5\}}\\
&+
\bfx^{I_v}_{[6], \{1,2,5\}}\ka_{1+2}+\bfx^{I_v}_{[6], \{1,4,5\}}\ka_1+\bfx^{I_v}_{[6], \{1,2,4,5\}}(-\ka_{1+2, -1})\\
&=x_1x_{1+2}+x_1x_{2+3}+x_2x_{2+3}-x_1x_{1+2}x_{2+3}\ka_{1+2}-x_1x_2x_{2+3}\ka_1-x_1x_{1+2}x_{2+3}\ka_{1+2,-1}\\
&=x_1x_{1+2}+x_1x_{2+3}+x_2x_{2+3}-x_{2+3}(x_1+x_2+\frac{x_1}{x_{-1}}x_{1+2}).
\end{align*}
In particular, if $F=F_a$, then 
\[b^X_{s_1s_2,s_1s_2s_3s_1s_2}=\al_1(\al_{1}+\al_{2})+\al_1(\al_{2}+\al_{3})+\al_2(\al_{2}+\al_{3}).\]
 If $F=F_m$, then 
\[b^X_{s_1s_2,s_1s_2s_3s_1s_2}=x_1x_{1+2}+x_1x_{2+3}+x_2x_{2+3}-x_1x_{2+3}(x_{1+2}+x_2).\]
 These agree with the result computed by using root polynomials. 
\end{ex}

\newcommand{\arxiv}[1]
{\texttt{\href{http://arxiv.org/abs/#1}{arXiv:#1}}}

\bibliographystyle{halpha}

\begin{thebibliography}{9}


\bibitem{AM15}
P.  Aluffi and  L. Mihalcea, {\it Chern-Schwartz-MacPherson classes for Schubert
cells in flag manifolds},   Compositio Math., {\bf 152} (2016), 2603--2625. 

\bibitem{AMSS17}
P. Aluffi, L. Mihalcea, J. Schuermann and C. Su,
\textit{Shadows of characteristic cycles, Verma modules, and positivity of Chern-Schwartz-MacPherson classes of Schubert cells}, Duke Math. J., to appear. \arxiv{1709:08697}.

\bibitem{AMSS19}
P. Aluffi, L. Mihalcea, J. Schuermann and C. Su,
\textit{Motivic Chern classes of Schubert cells, Hecke algebras, and applications to Casselmann's problem}, Ann. Sci. \'Ec. Norm. Sup\'er., to appear. 

\bibitem{CPZ13}
B.~Calm\`es, V.~Petrov and K.~Zainoulline,
\textit{Invariants, torsion indices and oriented cohomology of complete flags},
Ann. Sci. \'Ecole Norm. Sup. (4) {\bf 46} (2013), 3:405--448.


\bibitem{CZZ3}
B. Calm\`es, K. Zainoulline and C. Zhong, 
{\it Equivariant oriented cohomology of flag varieties}, Doc. Math., Extra Volume: Alexander S. Merkurjev's Sixtieth Birthday (2015), 113-144.  


\bibitem{CZZ1}
B. Calm\`es, K. Zainoulline and C. Zhong, 
\textit{A coproduct structure on the formal affine Demazure algebra},  Math. Zeitschrift, {\bf 282} (2016) (3), 1191-1218.

\bibitem{CZZ2}
B. Calm\`es, K. Zainoulline and C. Zhong, 
\textit{Push-pull operators on the formal affine Demazure algebra and its dual}, Manuscripta Math. {\bf 160} (2019), no. 1-2, 9-50.

\bibitem{Dem74} 
M. Demazure, {\it D\'esingularisation des vari\'et\'es de Schubert g\'en\'eralis\'ees}, Ann. Sci. \'Ecole Norm. Sup. (4) {\bf 7} (1974), 53–88.

\bibitem{DZ20} 
J. M. Douglass and C. Zhong, {\it The Leray-Hirsch Theorem for equivariant oriented cohomology of flag varieties}, preprint, \arxiv{2009.05902}.


\bibitem{goldin.knutson.schubert} 
R. Goldin and A. Knutson, {\it Schubert structure operators}, 
{S\'{e}m. Lothar. Combin.}, {Art. 90, 12}, {82B},({2020}).


\bibitem{GK19} 
R. Goldin and A. Knutson, {\it Schubert structure operators and $K_T(G/B)$}, 
 Pure Appl. Math. Q., {\bf 17}, No. 4 (2021), 1345-1385.

\bibitem{HMSZ14}
A.~Hoffmann, J.~Malag\'{o}n-L\'{o}pez, A.~Savage and K.~Zainoulline, 
\textit{Formal Hecke algebras and algebraic oriented cohomology theories},
Selecta Math. (N.S.) {\bf 20} (2014), no. 4, 1213-1245.

\bibitem{KK86} B. Kostant and S. Kumar,  \textit{The nil {H}ecke ring and cohomology of {$G/P$} for a {K}ac-{M}oody group {$G$},} Adv. in Math. {\bf 62} (1986), no. 3, 187--237. 


\bibitem{KK90}
B. Kostant and S. Kumar,
\textit{$T$-equivariant $K$-theory of generalized flag varieties,}
{J. Differential geometry} {\bf 32} (1990), 549--603. 

\bibitem{LM07}
M.~Levine and F.~Morel,
\textit{Algebraic cobordism},
Springer Monographs in Mathematics, Springer-Verlag, Berlin, 2007.

\bibitem{MO19} D. Maulik and A. Okounkov, {\it Quantum groups and quantum cohomology}, Ast{\'e}risque, no. 408 (2019), ix+209 pp.

\bibitem{RTV15} R. Rim{\'a}nyi, V. Tarasov and A. Varchenko, {\it Trigonometric weight functions as K-theoretic stable envelope maps for the cotangent bundle of a flag variety}, Journal of Geometry and Physics, {\bf 94} (2015), 81-119.

\bibitem{RTV17}
R. Rim{\'a}nyi, V. Tarasov and A. Varchenko,
\textit{Elliptic and K-theoretic stable envelopes and Newton polytopes}, Selecta Math. (N.S.) {\bf 25} (2019), no. 1, Art. 16, 43 pp.

\bibitem{RV15} R. Rim{\'a}nyi and A. Varchenko,
\textit{Equivariant Chern-Schwartz-MacPherson classes in partial flag varieties: interpolation and formulae}, IMPANGA2015 (eds. J. Buczynski, M. Michalek, E. Postingel), EMS (2018), pp. 225–235.

\bibitem{S15} C. Su, \textit{Restriction formula for stable basis of Springer resolution}, Selecta Math., {\bf  23} (2017), Issue 1, 497-518.

\bibitem{S19} C. Su, {\it Structure constants for Chern classes of Schubert cells}, Math. Zeitschrift,  Volume 298 (2021), 193-213. 

\bibitem{SZZ17} C. Su, G. Zhao and C. Zhong, {\it On the $K$-theory stable bases of the Springer resolution}, Ann. Sci. \'Ec. Norm. Sup\'er. (4), {\bf 53} (2020), no. 3, 663-711.

\bibitem{ZZ17} G. Zhao, C. Zhong, {\it Geometric representations of the formal affine Hecke algebra},     Adv. Math., 317 (2017), 50-90.
\bibitem{Z15} C. Zhong, {\it On the formal affine Hecke algebra},  J. Inst. Math. Jussieu, {\bf 14} (2015), no. 4, 837-855.
\end{thebibliography}

\end{document}